\documentclass[11pt]{amsart}
\usepackage{amsmath,amssymb,mathrsfs}
\newtheorem{theorem}{Theorem}[section]
\newtheorem{proposition}[theorem]{Proposition}
\newtheorem{corollary}[theorem]{Corollary}
\newtheorem{lemma}[theorem]{Lemma}
\theoremstyle{definition}
\newtheorem{definition}[theorem]{Definition}
\newtheorem{remark}[theorem]{Remark}

\begin{document} 

\title[Positive mass theorem in arbitrary dimension]{A dimension descent scheme for the positive mass theorem in arbitrary dimension}
\author{Simon Brendle and Yipeng Wang}
\address{Columbia University \\ 2990 Broadway \\ New York NY 10027 \\ USA}
\address{Columbia University \\ 2990 Broadway \\ New York NY 10027 \\ USA}
\thanks{The first author was supported by the National Science Foundation under grant DMS-2403981 and by the Simons Foundation.}
\maketitle
\begin{abstract}
We describe how the Schoen-Yau proof of the positive mass theorem can be extended to arbitrary dimensions. To overcome the problem of singularities, we propose a new inductive scheme. To carry out the inductive step, we use a combination of several techniques, including the shielding principle of Lesourd-Unger-Yau, as well as a conformal blow-up argument in the spirit of Bi-Hao-He-Shi-Zhu. Our arguments also rely on the Cheeger-Naber bound for the Minkowski dimension of the singular set.
\end{abstract}

\tableofcontents

\section{Introduction}

We begin with several definitions.

\begin{definition}
\label{AF.end}
Let $n \geq 3$ be an integer. Let $(M,g)$ be a complete Riemannian manifold of dimension $n$. We say that $(M,g)$ has an asymptotically flat end if there exists a compact domain $K \subset M$ with smooth boundary and a connected component $E_0$ of $M \setminus K$ such that $E_0$ is diffeomorphic to the complement of the unit ball in $\mathbb{R}^n$. Moreover, we require that there exist real numbers $\alpha$ and $\delta>0$ such that 
\[|\bar{D}^m(g - (1+\alpha \, r^{2-n}) \, \bar{g})|_{\bar{g}} \leq C(m) \, r^{2-n-m-2\delta}\] 
at each point in $E_0$ and for every nonnegative integer $m$. Here, $\bar{g}$ denotes the Euclidean metric on the asymptotically flat end $E_0$, $\bar{D}^m$ denotes the covariant derivative of order $m$ with respect to $\bar{g}$, and $r = \sqrt{x_1^2+\hdots+x_n^2}$ denotes the radial coordinate on the asymptotically flat end $E_0$.
\end{definition}

\begin{definition}
\label{mass.of.AF.end}
Let $n \geq 3$ be an integer. Let $(M,g)$ be a complete Riemannian manifold of dimension $n$ with an asymptotically flat end. We define the mass of $(M,g)$ to be $(n-1)\alpha$, where $\alpha$ denotes the coefficient in the asymptotic expansion of the metric.
\end{definition}

\begin{definition}
\label{dataset}
Let $n \geq 3$ be an integer. An $n$-dataset consists of a complete Riemannian manifold $(M,g)$ of dimension $n$ together with positive smooth functions $\rho$ and $Q$ satisfying the following conditions: 
\begin{itemize}
\item The manifold $(M,g)$ has an asymptotically flat end $E_0$. 
\item If $n=3$, we assume in addition that $M \setminus E_0$ is a bounded subset of $(M,g)$. In other words, if $n=3$, we assume that $(M,g)$ has no ends other than $E_0$.
\item There exist real numbers $\beta$ and $\delta>0$ such that 
\[|\bar{D}^m(\rho - (1+\beta \, r^{2-n}))|_{\bar{g}} \leq C(m) \, r^{2-n-m-2\delta}\] 
and 
\[|\bar{D}^m Q|_{\bar{g}} \leq C(m) \, r^{-n-m-2\delta}\] 
at each point in $E_0$ and for every nonnegative integer $m$. 
\item We have 
\begin{align*}
&\int_M \rho \, |df|^2 + \frac{1}{2} \int_M \rho \, \Big ( R - 2 \, \Delta \log \rho - \frac{n+1}{n+2} \, |d \log \rho|^2 \Big ) \, f^2 \\ 
&\geq \int_M \rho \, Q \, f^2 
\end{align*}
for every smooth test function $f$ with the property that the set $\{f \neq 0\} \setminus E_0$ is bounded and there exists a constant $a$ such that the set $\{f \neq a\} \cap E_0$ is bounded. In view of our decay conditions, the functions $R$, $\Delta \log \rho$, $|d\log \rho|^2$, and $Q$ belong to $L^1(E_0)$, so the integrals are well-defined.
\end{itemize}
\end{definition}

\begin{definition}
\label{mass.of.dataset}
Let $n \geq 3$ be an integer, and let $(M,g,\rho,Q)$ be an $n$-dataset. We define the mass of the $n$-dataset $(M,g,\rho,Q)$ to be $(n-1)\alpha + 2\beta$, where $\alpha$ denotes the coefficient in the asymptotic expansion of the metric and $\beta$ denotes the coefficient in the asymptotic expansion of $\rho$.
\end{definition}

We now state the main result of this paper.

\begin{theorem}
\label{pmt}
Let $n \geq 3$ be an integer, and let $(M,g,\rho,Q)$ be an $n$-dataset. Then the mass (in the sense of Definition \ref{mass.of.dataset}) of the $n$-dataset $(M,g,\rho,Q)$ is strictly positive.
\end{theorem}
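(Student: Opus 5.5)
We argue by induction on $n$, proving that every $n$-dataset has strictly positive mass. The point of Definition \ref{dataset} is that the class of datasets is closed under taking stable minimal (more precisely, $\rho$-weighted minimal) slices. Suppose, for contradiction, that $(M,g,\rho,Q)$ is an $n$-dataset with mass $(n-1)\alpha+2\beta \leq 0$.

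\emph{Step 1: reduction to harmonic asymptotics with negative mass.} We first perturb the data: solve a conformal/weight-adjusting equation on $M$ so that the mass becomes strictly negative. We use the strict positivity $Q>0$ to absorb an error. In the asymptotic region we then arrange that the level sets $\{x_n = \pm \Lambda\}$ act as barriers for the weighted area functional
\[
\int_\Sigma \rho\, d\mu.
\]
With $(n-1)\alpha+2\beta<0$, the weighted mean curvature of the coordinate slabs has the correct sign, in the manner of Schoen--Yau. To handle the extra ends of $M$ (allowed when $n\geq 4$), we use the shielding principle of Lesourd--Unger--Yau, which confines the minimizer to a region shielded from those ends. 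We then minimize the weighted area among hypersurfaces asymptotic to coordinate planes, obtaining an area-minimizing (for weight $\rho$) hypersurface $\Sigma^{n-1}$ asymptotic to $\{x_n=\text{const}\}$.

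\emph{Step 2: stability and the induced dataset.} Where $\Sigma$ is smooth, the second variation of weighted area gives the following for test functions $f$ on $\Sigma$:
\[
\int_\Sigma \rho|\nabla f|^2 \geq \int_\Sigma \rho\,\bigl(\mathrm{Ric}(\nu,\nu)+|A|^2 - D^2\log\rho(\nu,\nu)\bigr) f^2 .
\]
Combining this with the Gauss equation and the inequality in Definition \ref{dataset} for $M$ (restricted suitably) should yield the same inequality on $\Sigma$ with a new weight. The new weight is $\tilde\rho=\rho\, u$, where $u$ is a first eigenfunction, or one uses $\rho$ times a power chosen so that the constant $\frac{n+1}{n+2}$ becomes $\frac{n}{n+1}$. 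The remainder furnishes a positive $\tilde Q$. One checks, via the asymptotics, that the resulting dataset on $\Sigma$ has mass $(n-2)\tilde\alpha+2\tilde\beta$ equal to (a positive multiple of) the negative mass of $M$. For $n-1=3$ we need $\Sigma$ to have only one end; this follows from the shielding construction.

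\emph{Step 3: singularities --- the main obstacle.} For $n\geq 8$, $\Sigma$ has a singular set $S$ of codimension at least $8$ in $M$. By Cheeger--Naber it has finite $(n-8)$-dimensional Minkowski content, with quantitative bounds. This is the step I expect to be hardest. My plan is to make $\Sigma\setminus S$ complete with a conformal blow-up in the spirit of Bi--Hao--He--Shi--Zhu: multiply the induced metric by $\phi^{4/(n-3)}$ (or adjust $\tilde\rho$) with $\phi$ blowing up like $\mathrm{dist}(\cdot,S)^{-\gamma}$. This pushes $S$ to infinity and creates new non-asymptotically-flat ends. The Minkowski bound ensures that $\phi$ can be built, by a weighted sum over covering balls, so that the stability inequality survives with a positive remainder: the Hardy-type term dominates. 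We leave the asymptotically flat end untouched, so the mass stays negative. The resulting $(n-1)$-dataset, which is allowed extra incomplete-turned-complete ends because the definition only requires $E_0$ to be asymptotically flat and test functions to be bounded off $E_0$, contradicts the inductive hypothesis.

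\emph{Step 4: base case.} For $n=3$ there are no extra ends, and Step 2 produces a $2$-dimensional stable weighted surface. Gauss--Bonnet applied with the test function $f\equiv1$ (cut off) gives a contradiction, since the total curvature of a plane-like end is $2\pi\cdot 1$ minus a term controlled by the mass. This completes the induction.
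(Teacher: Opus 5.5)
Your overall architecture matches the paper's: negative mass for the barriers, Lesourd--Unger--Yau shielding, a weighted $\mu$-bubble, Cheeger--Naber, a Bi--Hao--He--Shi--Zhu blow-up, and a contradiction with the inductive hypothesis. But Step 2 has a genuine gap. The dataset condition on $M$ is an \emph{integral} inequality over $M$. It cannot be ``restricted'' to $\Sigma$, and replacing $\rho$ by a power of $\rho$ does not give the pointwise information you need. The paper gets a pointwise identity from one function that also does the job of your Step 1. On a slight enlargement $\hat E$ of the shielded domain $E$ it solves
\[
-\Delta \hat v - \langle d\log \rho,d\hat v\rangle + \frac{1}{2}\Big(R - 2\,\Delta\log\rho - \frac{n+1}{n+2}\,|d\log\rho|^2 - Q\Big)\hat v = 0
\]
with $\hat v=0$ on $\partial\hat E$ and $\hat v\to 1$ at infinity; existence comes from coercivity, using the dataset inequality, Hardy, and $Q>0$. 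Testing the dataset inequality with $\hat v$ and using $\int\rho\,Q\,\hat v^2>0$ gives $\hat v=1+\gamma r^{2-n}+\dots$ with $\gamma<0$. Hence $\hat\rho=\rho\hat v$ satisfies $(n-1)\alpha+2\hat\beta<0$.

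This $\hat v$ is the ``eigenfunction'' you gesture at, but it must live on $M$ and enter the variational problem. You must minimize with weight $\hat\rho$, not $\rho$. You must also minimize the $\mu$-bubble functional $\int_{\partial^*\Omega}\hat\rho-\int_\Omega\hat\rho\,\Phi$ rather than weighted area, since shielding works only because $\Phi\to-\infty$ on $\partial E$. Then the stability term $-(D^2\log\hat\rho)(\nu_\Sigma,\nu_\Sigma)$ combines with $-\Delta_\Sigma\log\hat\rho$ into the ambient $-\Delta\log\hat\rho+H_\Sigma\langle\nabla\log\hat\rho,\nu_\Sigma\rangle$, which the equation for $\log\hat v$ controls pointwise. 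With weight $\rho$, an unsigned term $(D^2\log\hat v)(\nu_\Sigma,\nu_\Sigma)$ would survive. Gauss, $H_\Sigma+\langle\nabla\log\hat\rho,\nu_\Sigma\rangle=\Phi$ and two completed squares then give Proposition \ref{generalized.Schoen.Yau.identity}. The change $\frac{n+1}{n+2}\to\frac{n}{n+1}$ falls out of this algebra. The terms $\frac14\Phi^2+\langle\nabla\Phi,\nu_\Sigma\rangle$, which your stability inequality omits, are absorbed by $Q+\frac12\Phi^2-2|d\Phi|\geq 2\hat Q$ from Lemma \ref{Phi}. That is where $Q>0$ is really used and where the new positive $\hat Q$ comes from.

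Two further corrections. First, the slice does not inherit the negative mass. $\Sigma$ is a graph $x_n=c_0+\mu_0|x'|^{3-n}+\dots$, and $\hat g-\delta$, $\hat\rho-1$ are $O(|x'|^{2-n})=o(|x'|^{3-n})$. So the $(n-1)$-dataset has mass exactly $0$ (Corollary \ref{singular.set.empty}). Negativity is used only for the barriers, and the contradiction works because the theorem asserts \emph{strict} positivity. Likewise, in your base case the boundary term at infinity is exactly $2\pi$, independent of the mass. That Gauss--Bonnet route is a legitimate alternative to the paper's reduction to the classical Schoen--Yau theorem (via the scalar-flat metric $(1+v)^4g$ and $4\gamma<\beta$). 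However, it needs the same $\hat v$-weighted construction in dimension $3$.

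Second, in Step 3 the quadratic form does not survive because of a Hardy-type term, and you have no intrinsic control of $\Sigma$ near $\mathcal S$ with which to build $\phi$. The paper instead restricts an ambient function: $w=1+\varepsilon_0\Psi|_\Sigma$, where $\Psi=\sum_l\sum_{p\in\mathcal S^{(l)}}t_l^{\frac{n-5}{2}}\zeta(d(p,\cdot)^2)$. Here $\zeta(d^2)$ behaves like $d^{3-n}$, the borderline power for the hyperplane Laplacian $\Delta-D^2(\xi,\xi)$, plus a slightly less singular correction that makes the inequality strict. The result is that $\Delta\Psi-(D^2\Psi)(\xi,\xi)$ plus first-order terms is $\leq C\hat Q$ for \emph{every} unit $\xi$, with summability coming from the Minkowski bound. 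Taking $\xi=\nu_\Sigma$ and using the $\mu$-bubble equation yields $\Delta_\Sigma w+\frac{n-3}{n+1}\langle\nabla^\Sigma\log\hat\rho,\nabla^\Sigma w\rangle\leq\frac{n-3}{n+1}\hat Q\,w$. With $\tilde g=w^{\frac{n+1}{n-3}}\hat g$ and $\tilde\rho=w^{-\frac{n+1}{2}}\hat\rho$, the $|dw|^2$ terms cancel identically, so this supersolution property is all that is needed. This is your $\phi^{4/(n-3)}$ with $\phi=w^{\frac{n+1}{4}}$ and $\tilde\rho=\phi^{-2}\hat\rho$. Finally, $w\gtrsim d(\cdot,\mathcal S)^{-2}$ gives completeness.
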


As a special case, we obtain the following result.

\begin{corollary}
\label{special.case}
Let $n \geq 3$ be an integer. Suppose that $(M,g)$ is a complete Riemannian manifold with an asymptotically flat end. If the scalar curvature of $g$ is positive at each point in $M$, then the mass (in the sense of Definition \ref{mass.of.AF.end}) of $(M,g)$ is strictly positive.
\end{corollary}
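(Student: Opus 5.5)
The corollary should follow from Theorem \ref{pmt} once we exhibit an $n$-dataset whose mass equals the mass of $(M,g)$. Take $\rho = 1$. Then $\beta = 0$, and the mass of the dataset, $(n-1)\alpha + 2\beta$, is exactly $(n-1)\alpha$, the mass of $(M,g)$ in the sense of Definition \ref{mass.of.AF.end}. The asymptotic condition on $\rho$ holds trivially. With $\rho = 1$ the terms involving $\log \rho$ vanish, so the required inequality becomes
\[\int_M |df|^2 + \frac{1}{2} \int_M R \, f^2 \geq \int_M Q \, f^2 .\]
It therefore suffices to find a positive smooth $Q$ with $Q \leq \frac{1}{2} R$ on $M$ and $|\bar{D}^m Q|_{\bar{g}} \leq C(m) \, r^{-n-m-2\delta}$ on $E_0$. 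The Dirichlet term is nonnegative and can simply be dropped.

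To construct $Q$, fix a positive smooth function $\phi$ on $M$ that equals $r^{-n-2\delta}$ on $E_0$ outside a compact set. On the compact set $K \cup \{r \leq r_0\}$, the positive function $R$ is bounded below by a positive constant. On $M \setminus (K \cup E_0)$, which need not be compact, I would choose $\phi$ so that $\phi \leq \frac{1}{2} R$ pointwise. This is possible because $R > 0$ is continuous, so a positive smooth function below $\frac{1}{2} R$ exists by a partition of unity argument. The same argument handles the whole of $M$ once the behaviour on $E_0$ is settled.

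The one genuine point is the asymptotic end. There we need $r^{-n-2\delta'} \lesssim R$ for some $\delta' > 0$, and this does not follow from positivity of $R$ alone: the decay assumption only gives $|R| \leq C r^{-n-2\delta}$, an upper bound. I would handle this by taking $Q$ to be a positive smooth function with $Q \leq \frac{1}{2} R$ everywhere, and on $E_0$ setting $Q = \epsilon \, \min(\tfrac{1}{2} R, r^{-n-2\delta})$, suitably smoothed. The derivative bounds on $Q$ then need care, since taking a minimum and smoothing can spoil the estimates on $\bar{D}^m Q$.

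If that becomes awkward, I would instead perturb the metric by a conformal change so that $R$ is comparable to $r^{-n-2\delta}$ near infinity, changing the mass by an arbitrarily small amount. Strictness of the inequality would then have to be preserved, for instance by keeping track of a quantitative lower bound on the mass. Alternatively, if the paper's convention allows it, one could take $Q$ with support-like decay dominated by $R$. I expect this asymptotic construction of $Q$ to be the main obstacle. Everything else is immediate from Theorem \ref{pmt}.
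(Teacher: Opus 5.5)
Your reduction to $\rho=1$ is correct, and you rightly note that it suffices to produce a positive smooth $Q$ with $Q\le\frac12 R$ obeying the decay condition in Definition \ref{dataset}. The gap is that you never actually produce such a $Q$, because you stall on an obstacle that does not exist. Definition \ref{dataset} asks only for pointwise positivity and the \emph{upper} bounds $|\bar{D}^m Q|_{\bar g}\le C(m)\,r^{-n-m-2\delta}$. Nothing requires $Q$ to be comparable to $r^{-n-2\delta}$, or bounded below by any power of $r$, near infinity. So your claim that one needs $r^{-n-2\delta'}\lesssim R$ is a misreading; note that your own candidate $\epsilon\min(\frac12 R,r^{-n-2\delta})$ would not have such a lower bound either.

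The observation you are missing is that $R$ itself satisfies the required bounds. The model metric $(1+\alpha\,r^{2-n})\,\bar g$ has scalar curvature $O(r^{2-2n})$, because $r^{2-n}$ is $\bar g$-harmonic. The error term $g-(1+\alpha\,r^{2-n})\,\bar g=O(r^{2-n-2\delta})$, with each derivative gaining a factor $r^{-1}$, contributes $O(r^{-n-2\delta})$ to $R$. The same holds for $\bar D^m R$ with an extra factor $r^{-m}$, after shrinking $\delta$ if necessary so that $2\delta\le n-2$. Hence $Q=\frac12 R$ is admissible: it is positive by hypothesis, and the dataset inequality collapses to $\int_M|df|^2\ge 0$. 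This one-line choice is exactly the paper's proof.

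The workarounds you sketch are unnecessary, and the conformal-perturbation one fails as stated. Changing the mass by an arbitrarily small amount and then removing the perturbation only yields $(n-1)\alpha\ge 0$. Strictness is lost unless you supply a quantitative lower bound on the mass, which you do not. The smoothed-minimum route likewise leaves the derivative estimates on $Q$ unverified. As written, the proof is incomplete at precisely the step you flagged, and the fix is simply to take $Q=\frac12 R$.
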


Corollary \ref{special.case} follows from Theorem \ref{pmt} by putting $\rho=1$ and $Q=\frac{1}{2} \, R$. 

In their groundbreaking works \cite{Schoen},\cite{Schoen-Yau1}, Schoen and Yau proved the positive mass theorem for asymptotically flat manifolds of dimension $n \leq 7$. Lesourd, Unger, and Yau \cite{Lesourd-Unger-Yau} extended the positive mass theorem to manifolds which have one asymptotically flat end and in addition have other arbitrary ends. As part of their work, Lesourd, Unger, and Yau introduced a shielding principle, which plays a central role in our work. Chodosh, Mantoulidis, Schulze, and Wang \cite{Chodosh-Mantoulidis-Schulze-Wang} have verified the positive mass theorem up to dimension $11$, and Bi, Hao, He, Shi, and Zhu \cite{Bi-Hao-He-Shi-Zhu} recently gave a proof of the positive mass theorem up to dimension $19$. Finally, Schoen and Yau \cite{Schoen-Yau2} and Lohkamp \cite{Lohkamp} have proposed proofs of the positive mass theorem in arbitrary dimension.

The proof of Theorem \ref{pmt} proceeds by induction on $n$. For $n=3$, Theorem \ref{pmt} can be reduced to the classical positive mass theorem of Schoen and Yau. 

We now give an overview of the proof of the inductive step. Suppose that $n \geq 4$ and $(M,g,\rho,Q)$ is an $n$-dataset with nonpositive mass. Following Lesourd-Unger-Yau, we construct an open domain $E$ together with smooth functions $\Phi$ and $\hat{Q}$ satisfying the following conditions:
\begin{itemize}
\item The closure of $E_0$ is contained in $E$.
\item The complement $E \setminus E_0$ is a bounded subset of $(M,g)$. 
\item $\Phi=0$ and $\hat{Q} = \frac{1}{2} \, Q$ at each point in $E_0$.
\item $\Phi \leq 0$ and $\hat{Q} > 0$ at each point in $E$.
\item $\Phi \to -\infty$ on the boundary $\partial E$. 
\item $Q + \frac{1}{2} \, \Phi^2 - 2 \, |d\Phi| \geq 2\hat{Q}$ at each point in $E$. 
\end{itemize}
Note that $E$ has compact, smooth boundary and one asymptotically flat end.

In the next step, we slightly enlarge the domain $E$. On the enlarged domain $\hat{E}$, we construct a positive solution $\hat{v}$ of the linear PDE 
\[-\Delta \hat{v} - \langle d\log \rho,d\hat{v} \rangle + \frac{1}{2} \, \Big ( R - 2 \, \Delta \log \rho - \frac{n+1}{n+2} \, |d\log \rho|^2 - Q \Big ) \, \hat{v} = 0\] 
with Dirichlet boundary condition on $\partial \hat{E}$. We then restrict $\hat{v}$ to the smaller domain $E$, and define $\hat{\rho} = \rho \, \hat{v}$. 

In the next step, we construct a $\mu$-bubble in $E$. This $\mu$-bubble may have singularities. We denote by $\Sigma$ the regular part of the $\mu$-bubble. Then $\Sigma$ is a smooth hypersurface in $E$ satisfying 
\[H_\Sigma + \langle \nabla \log \hat{\rho},\nu_\Sigma \rangle = \Phi.\] 
To construct this $\mu$-bubble we need barriers near infinity, as well as barriers near the boundary $\partial E$. To construct the barriers near infinity, we use the fact that the $n$-dataset $(M,g,\rho,Q)$ has nonpositive mass. To construct the barriers near $\partial E$, we use the fact that $\Phi \to -\infty$ on $\partial E$. 

The hypersurface $\Sigma$ with its induced metric $\hat{g}$ can be viewed as an incomplete manifold of dimension $n-1$ with an asymptotically flat end. Moreover, $\Sigma$ satisfies a stability inequality. By combining the stability inequality for $\Sigma$ with a generalization of the famous Schoen-Yau identity, we conclude that a certain quadratic form on $\Sigma$ is positive. 

In the last step, we construct a conformal metric $\tilde{g} = w^{\frac{n+1}{n-3}} \, \hat{g}$ on $\Sigma$. The conformal factor $w$ is obtained by restricting a suitable function on ambient space to $\Sigma$. This function blows up at a controlled rate near the singular set, thereby ensuring that the metric $\tilde{g}$ is complete. Importantly, the conformal factor can be chosen in such a way that the positivity of the quadratic form is preserved. This allows us to construct an $(n-1)$-dataset with mass equal to $0$, thereby completing the inductive step.

\section{Proof of Theorem \ref{pmt} for $n=3$}

Throughout this section, we assume that $(M,g,\rho,Q)$ is a $3$-dataset. Let $E_0$ denote the asymptotically flat end of $(M,g)$. By assumption, $M \setminus E_0$ is a bounded subset of $(M,g)$. 

\begin{proposition}
\label{quadratic.form.3D}
Suppose that $a$ is a constant and $F$ is a smooth function on $M$ such that $F = a \, \rho^{\frac{1}{2}}$ near infinity. Then 
\[\int_M |dF|^2 + \frac{1}{8} \int_M R \, F^2 - \frac{1}{4} \int_M Q \, F^2 \geq -\pi \beta a^2,\] 
where $\beta$ denotes the coefficient in the asymptotic expansion of $\rho$.
\end{proposition}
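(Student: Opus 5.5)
The plan is to plug a suitable test function into the defining inequality of a $3$-dataset (Definition \ref{dataset}), integrate by parts twice, absorb the resulting cross term by Cauchy--Schwarz, and read off the boundary flux at infinity, which will produce exactly $-\pi\beta a^2$.

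\textbf{Choice of test function.} Set $f = F\rho^{-1/2}$. Near infinity $f = a$, and $M \setminus E_0$ is bounded when $n=3$. Hence $f$ is an admissible test function in Definition \ref{dataset}. Since $df = \rho^{-1/2}(dF - \tfrac12 F\, d\log\rho)$, we get
\[\rho\,|df|^2 = |dF|^2 - F\langle dF,d\log\rho\rangle + \tfrac14 F^2|d\log\rho|^2 .\]
With $n=3$ the coefficient $\frac{n+1}{n+2}$ equals $\frac45$, and $\rho f^2 = F^2$. Write $A=\int_M|dF|^2$, $X=\int_M F\langle dF,d\log\rho\rangle$ and $Y=\int_M F^2|d\log\rho|^2$. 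The dataset inequality then reads
\[A - X + \tfrac14 Y + \tfrac12\int_M R F^2 - \int_M F^2\Delta\log\rho - \tfrac25 Y \geq \int_M Q F^2 .\]

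\textbf{Integration by parts.} Integrate by parts over $\{r\le r_0\}$ and let $r_0\to\infty$. This gives
\[\int_M F^2\Delta\log\rho = -2X + b, \qquad b=\lim_{r_0\to\infty}\int_{\{r=r_0\}} F^2\,\partial_r\log\rho .\]
From the asymptotics, $\partial_r \log\rho = -\beta r^{-2} + O(r^{-2-2\delta})$ and $F^2 \to a^2$, while the area of $\{r=r_0\}$ is $4\pi r_0^2(1+o(1))$. Hence $b=-4\pi\beta a^2$. The decay assumptions make all integrals converge, so the limits are legitimate. Substituting, the inequality becomes
\[\tfrac12\int_M RF^2 - \int_M QF^2 \geq -A - X + \tfrac{3}{20}Y + b .\]

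\textbf{Absorbing the cross term.} Adding $4A$ to both sides gives
\[4A + \tfrac12\int_M RF^2 - \int_M QF^2 \geq 3A - X + \tfrac{3}{20}Y + b .\]
By Cauchy--Schwarz, $|X| \le 3A + \tfrac1{12}Y$, and since $\tfrac3{20} > \tfrac1{12}$ we have $3A - X + \tfrac3{20}Y \ge 0$. Therefore $4A + \tfrac12\int_M RF^2 - \int_M QF^2 \ge -4\pi\beta a^2$. Dividing by $4$ yields the claimed inequality.

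The only delicate points are the bookkeeping of the boundary terms and checking that the flux on large spheres converges to $-4\pi\beta a^2$. Both follow from the decay rates in Definition \ref{dataset}.
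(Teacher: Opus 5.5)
Your proof is correct and follows essentially the same route as the paper. Both arguments substitute $f=\rho^{-1/2}F$ into the $3$-dataset inequality and integrate the $\Delta\log\rho$ term by parts. The paper adds $\int_M \mathrm{div}(f^2\,d\rho)=\int_M\mathrm{div}(F^2\,d\log\rho)=-4\pi\beta a^2$, which is your flux $b$. Both then absorb the cross term. The paper does this via the identity $\rho|df|^2+2f\langle d\rho,df\rangle+\frac35\rho^{-1}|d\rho|^2f^2=\frac83\rho|df+\frac12\rho^{-1}f\,d\rho|^2-\frac53\rho|df+\frac15\rho^{-1}f\,d\rho|^2$, which in your variables is exactly the inequality $3|dF|^2-F\langle dF,d\log\rho\rangle+\frac{3}{20}F^2|d\log\rho|^2\geq 0$ that you obtain by Cauchy--Schwarz.
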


\textbf{Proof.}
Let $f = \rho^{-\frac{1}{2}} \, F$. Then $f = a$ near infinity. Since $(M,g,\rho,Q)$ is a $3$-dataset, we know that 
\begin{equation}
\label{3D.inequality}
\int_M \rho \, |df|^2 + \frac{1}{2} \int_M \rho \, \Big ( R - 2 \, \Delta \log \rho - \frac{4}{5} \, |d \log \rho|^2 \Big ) \, f^2 \geq \int_M \rho \, Q \, f^2. 
\end{equation}
On the other hand, it follows from the divergence theorem that 
\begin{equation} 
\label{divergence.theorem}
\int_M \text{\rm div}(f^2 \, d\rho) = -4\pi \beta a^2. 
\end{equation}
Adding (\ref{3D.inequality}) and (\ref{divergence.theorem}) gives 
\begin{align*}
&\int_M \rho \, |df|^2 + 2 \int_M f \, \langle d\rho,df \rangle + \frac{3}{5} \int_M \rho^{-1} \, |d\rho|^2 \, f^2 + \frac{1}{2} \int_M \rho \, R \, f^2 \\ 
&\geq -4\pi \beta a^2 + \int_M \rho \, Q \, f^2. 
\end{align*}
Using the pointwise inequality 
\begin{align*} 
&\rho \, |df|^2 + 2f \, \langle d\rho,df \rangle + \frac{3}{5} \, \rho^{-1} \, |d\rho|^2 \, f^2 \\ 
&= \frac{8}{3} \, \rho \, \Big | df + \frac{1}{2} \, \rho^{-1} \, f \, d\rho \Big |^2 - \frac{5}{3} \, \rho \, \Big | df + \frac{1}{5} \, \rho^{-1} \, f \, d\rho \Big |^2 \\ 
&\leq 4 \, \rho \, \Big | df + \frac{1}{2} \, \rho^{-1} \, f \, d\rho \Big |^2,
\end{align*}
we obtain 
\[4 \int_M \rho \, \Big | df + \frac{1}{2} \, \rho^{-1} \, f \, d\rho \Big |^2 + \frac{1}{2} \int_M \rho \, R \, f^2 \geq -4\pi \beta a^2 + \int_M \rho \, Q \, f^2.\] 
Since $F = \rho^{\frac{1}{2}} \, f$, we conclude that 
\[4 \int_M |dF|^2 + \frac{1}{2} \int_M R \, F^2 \geq -4\pi\beta a^2 + \int_M Q \, F^2.\] 
This completes the proof of Proposition \ref{quadratic.form.3D}. \\

In the next step, we construct a solution of a certain linear PDE. To that end, we follow the arguments in Eichmair-Huang-Lee-Schoen \cite{Eichmair-Huang-Lee-Schoen} and Carlotto \cite{Carlotto}. Let us fix a nonnegative smooth function $\omega$ such that $\omega$ is supported in $E_0$ and $\omega = r^{-2}$ near infinity. By Hardy's inequality, we can find a positive constant $\kappa$ such that 
\begin{equation} 
\label{Hardy.inequality.3D}
\int_M |dF|^2 + \frac{1}{8} \int_M Q \, F^2 \geq \kappa \int_M \omega \, F^2 
\end{equation} 
for every smooth function $F$ on $M$ that vanishes near infinity. Since the function $4\kappa \, \omega + R$ is positive near infinity, we can find a large constant $\Lambda>2$ such that 
\begin{equation} 
\label{choice.of.Lambda.3D}
(2\Lambda-4) \, Q + 4\kappa \, \omega + R \geq 0 
\end{equation}
at each point on $M$.

\begin{proposition}[Coercivity]
\label{coercivity.3D}
Suppose that $F$ is a smooth function on $M$ that vanishes near infinity. Then 
\[\int_M |dF|^2 + \frac{1}{8} \int_M R \, F^2 \geq \frac{1}{8\Lambda} \int_M (Q + 4\kappa \, \omega) \, F^2.\] 
\end{proposition}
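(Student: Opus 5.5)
The plan is to take a convex combination of two lower bounds for the quadratic form
\[A(F) := \int_M |dF|^2 + \frac{1}{8} \int_M R \, F^2,\]
one coming from Proposition \ref{quadratic.form.3D} and one from Hardy's inequality (\ref{Hardy.inequality.3D}). The constant $\Lambda$ in (\ref{choice.of.Lambda.3D}) controls the negative part of $R$, and it should make the coefficients come out exactly.

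\textbf{Step 1.} Since $F$ vanishes near infinity, I apply Proposition \ref{quadratic.form.3D} with $a=0$. This gives
\[A(F) \geq \frac{1}{4} \int_M Q \, F^2.\]

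\textbf{Step 2.} I bound $\int_M |dF|^2$ from below using (\ref{Hardy.inequality.3D}):
\[\int_M |dF|^2 \geq \kappa \int_M \omega \, F^2 - \frac{1}{8} \int_M Q \, F^2.\]
For the curvature term, (\ref{choice.of.Lambda.3D}) gives the pointwise bound
\[\frac{1}{8} \, R \geq -\frac{\Lambda-2}{4} \, Q - \frac{\kappa}{2} \, \omega.\]
Adding these yields
\[A(F) \geq \frac{\kappa}{2} \int_M \omega \, F^2 - \Big( \frac{1}{8} + \frac{\Lambda-2}{4} \Big) \int_M Q \, F^2.\]

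\textbf{Step 3.} I write $A(F) = (1-\frac{1}{\Lambda}) A(F) + \frac{1}{\Lambda} A(F)$. I use Step 1 for the first piece and Step 2 for the second.

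The coefficient of $\int_M Q \, F^2$ is then
\[\frac{\Lambda-1}{4\Lambda} - \frac{1}{8\Lambda} - \frac{\Lambda-2}{4\Lambda} = \frac{1}{8\Lambda}.\]
The coefficient of $\int_M \omega \, F^2$ is
\[\frac{\kappa}{2\Lambda} = \frac{4\kappa}{8\Lambda}.\]
Together these give exactly the claimed inequality
\[A(F) \geq \frac{1}{8\Lambda} \int_M (Q + 4\kappa \, \omega) \, F^2.\]

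\textbf{Main obstacle.} There is no real analytic obstacle, since all inputs are already established. The only care needed is choosing the weights $1-\frac{1}{\Lambda}$ and $\frac{1}{\Lambda}$ so that the negative $Q$-contribution produced by the lower bound on $R$ in Step 2 is absorbed by the positive $Q$-term from Step 1. The choice $\Lambda > 2$ in (\ref{choice.of.Lambda.3D}) guarantees that this bookkeeping leaves a positive multiple $\frac{1}{8\Lambda}$.
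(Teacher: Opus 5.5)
Your argument is correct and matches the paper's proof. The paper writes the difference of the two sides as an exact identity: $\frac{\Lambda-1}{\Lambda}\big(\int_M |dF|^2 + \frac{1}{8}\int_M R\,F^2 - \frac{1}{4}\int_M Q\,F^2\big)$, which is nonnegative by Proposition \ref{quadratic.form.3D} with $a=0$, plus $\frac{1}{\Lambda}$ times the Hardy expression from (\ref{Hardy.inequality.3D}), plus $\frac{1}{8\Lambda}\int_M ((2\Lambda-4)\,Q + 4\kappa\,\omega + R)\,F^2$; this is exactly your weighting, written as a sum of three nonnegative terms rather than a chain of inequalities. One small correction to your closing remark: the coefficient $\frac{1}{8\Lambda}$ comes out for any $\Lambda$, and your weights only need $\Lambda \geq 1$, whereas $\Lambda > 2$ (with $\Lambda$ large) is what makes $2\Lambda-4>0$, so that (\ref{choice.of.Lambda.3D}) can be achieved using $Q>0$.
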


\textbf{Proof.} 
We compute 
\begin{align*}
&\int_M |dF|^2 + \frac{1}{8} \int_M R \, F^2 - \frac{1}{8\Lambda} \int_M (Q + 4\kappa \, \omega) \, F^2 \\ 
&= \frac{\Lambda-1}{\Lambda} \, \bigg ( \int_M |dF|^2 + \frac{1}{8} \int_M R \, F^2 - \frac{1}{4} \int_M Q \, F^2 \bigg ) \\ 
&+ \frac{1}{\Lambda} \, \bigg ( \int_M |dF|^2 + \frac{1}{8} \int_M Q \, F^2 - \kappa \int_M \omega \, F^2 \bigg ) \\ 
&+ \frac{1}{8\Lambda} \int_M \Big ( (2\Lambda-4) \, Q + 4\kappa \, \omega + R \Big ) \, F^2. 
\end{align*} 
The first term on the right hand side is nonnegative by Proposition \ref{quadratic.form.3D}. The second term on the right hand side is nonnegative by the Hardy inequality (\ref{Hardy.inequality.3D}). The third term on the right hand side is nonnegative in view of (\ref{choice.of.Lambda.3D}). This completes the proof of Proposition \ref{coercivity.3D}. \\

Let $\mathcal{H}$ denote the set of all functions $F \in H_{\text{\rm loc}}^1(M)$ with the property that 
\[\int_M |dF|^2 + \int_M (Q+\omega) \, F^2 < \infty.\] 
We define 
\[\|F\|_{\mathcal{H}}^2 = \int_M |dF|^2 + \int_M (Q+\omega) \, F^2\] 
for all $F \in \mathcal{H}$. Since $|R| \leq O(r^{-3-2\delta})$, we know that $R \in L^1(M)$. Moreover, $R \, F \in L^1(M)$ and $R \, F^2 \in L^1(M)$ for all $F \in \mathcal{H}$. 

\begin{proposition}
\label{minimizer.of.quadratic.form.3D}
We can find a function $v \in \mathcal{H}$ with the property that $v$ minimizes the functional 
\[\int_M |dv|^2 + \frac{1}{8} \int_M R \, v^2 + \frac{1}{4} \int_M R \, v\] 
among all functions $v \in \mathcal{H}$. Moreover, we can choose $v$ so that $1+v \geq 0$. 
\end{proposition}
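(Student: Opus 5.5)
The plan is to apply the direct method of the calculus of variations to the functional
\[\mathcal{E}(v) = \int_M |dv|^2 + \frac{1}{8} \int_M R \, v^2 + \frac{1}{4} \int_M R \, v\]
on $\mathcal{H}$, and then obtain the sign condition by a truncation argument.

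\textbf{Coercivity.} First I will show that the quadratic part is coercive on $\mathcal{H}$. Proposition \ref{coercivity.3D} gives
\[\int_M |dF|^2 + \frac{1}{8} \int_M R \, F^2 \geq \frac{1}{8\Lambda} \int_M (Q+4\kappa \, \omega) \, F^2\]
for $F$ vanishing near infinity. Taking a convex combination of this with the trivial bound $\int |dF|^2 + \frac18 \int R F^2 \geq \int |dF|^2 - \frac18 \int |R| F^2$ will give the stronger lower bound
\[\int_M |dF|^2 + \frac{1}{8} \int_M R \, F^2 \geq c \, \|F\|_{\mathcal{H}}^2.\]
For the term $\int |R| F^2$ I will use $|R| \leq C(Q+\omega)$ on $M$. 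This holds on compact sets since $Q>0$, and near infinity since $|R| = O(r^{-3-2\delta}) \leq C r^{-2} = C\omega$. Concretely, with $\theta$ small,
\[\theta\Big(\int|dF|^2 - \tfrac18 C\int(Q+\omega)F^2\Big) + (1-\theta)\tfrac{1}{8\Lambda}\int(Q+4\kappa\omega)F^2 \geq c\|F\|^2.\]
Next I need this inequality on all of $\mathcal{H}$, not only for compactly supported $F$. I will show that such functions are dense in $\mathcal{H}$ using cutoffs $\chi(r/L)$: the error term $\int |d\chi|^2 F^2 \lesssim \int_{L<r<2L} r^{-2} F^2 \to 0$ is controlled by the $\omega$-weight. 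I will also use that $\int R F^2$ and $\int R F$ are continuous on $\mathcal{H}$. For the latter, the bound
\[|\textstyle\int RF| \leq (\int |R|^2/(Q+\omega))^{1/2}\|F\|\]
is finite, since $R^2/\omega \sim r^{-4-4\delta}$ is integrable in dimension three. So the linear term is a bounded functional. The functional is then coercive, strictly convex, and continuous on the Hilbert space $\mathcal{H}$, so a minimizer $v$ exists and is unique, either by Lax--Milgram/Riesz or by weak lower semicontinuity. Elliptic regularity shows $v$ is smooth and solves $-\Delta v + \frac18 R v + \frac18 R = 0$, i.e.\ $u=1+v$ satisfies $-\Delta u + \frac18 R u = 0$.

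\textbf{Sign condition.} To get $1+v \geq 0$, I will compare $\mathcal{E}(v)$ with $\mathcal{E}(\tilde v)$, where $\tilde v = \max(v,-1)$. Writing
\[\mathcal{E}(v) = \int|dv|^2 + \frac18\int R\,(1+v)^2 - \frac18\int R,\]
note that $\int R$ is finite. Also $\tilde v - v = (-1-v)_+$, which lies in $\mathcal{H}$ because $|d(-1-v)_+| \leq |dv|$ and $(-1-v)_+ \leq |v|$. Set $w = (1+v)_- \geq 0$. Then $1+\tilde v = (1+v)_+$ and $(1+v)^2 = (1+v)_+^2 + w^2$, with $|dv|^2 = |d\tilde v|^2 + |dw|^2$. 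Hence
\[\mathcal{E}(v) - \mathcal{E}(\tilde v) = \int|dw|^2 + \frac18\int R\,w^2 \geq c\|w\|^2 \geq 0.\]
By minimality and the uniqueness of the minimizer, $w = 0$, so $1+v \geq 0$.

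The main obstacle is justifying coercivity on the whole space $\mathcal{H}$. This requires the density argument and the integrability of $R$ against the weights. Once these are in place, the truncation step is routine.
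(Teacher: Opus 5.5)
Your proposal is correct and follows the paper's route: extend the coercivity of Proposition \ref{coercivity.3D} to all of $\mathcal{H}$ by approximation, get the minimizer by the direct method, then modify it pointwise to get $1+v \geq 0$. The differences are minor. The paper replaces $v$ by $|1+v|-1$, which has the same energy because the functional depends on $v$ only through $|dv|^2$ and $(1+v)^2$. Your truncation $\max(v,-1)$ instead shows that the minimizer itself already satisfies $1+v \geq 0$ (uniqueness is not even needed, since $\mathcal{E}(v)-\mathcal{E}(\tilde v) \geq c\|w\|^2$ together with minimality forces $w=0$). Your upgrade of coercivity to the full $\mathcal{H}$-norm, and the boundedness of $\int_M R\,v$, spell out what the paper calls ``follows easily.''
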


\textbf{Proof.} 
It follows from Proposition \ref{coercivity.3D} and a standard approximation argument that 
\[\int_M |dF|^2 + \frac{1}{8} \int_M R \, F^2 \geq \frac{1}{8\Lambda} \int_M (Q + 4\kappa \, \omega) \, F^2\] 
for all $F \in \mathcal{H}$. Using this coercivity property, the existence of a minimizer follows easily. By replacing $v$ by $|1+v| - 1$, we can arrange that $1+v$ is nonnegative. This completes the proof of Proposition \ref{minimizer.of.quadratic.form.3D}. \\

Let $v$ denote the minimizer constructed in Proposition \ref{minimizer.of.quadratic.form.3D}. By elliptic regularity theory, $v$ is a smooth solution of the PDE
\begin{equation} 
\label{pde.for.v.3D}
-\Delta v + \frac{1}{8} \, R \, (1+v) = 0. 
\end{equation}
Since $v \in \mathcal{H}$, the function $1+v$ does not vanish identically. Moreover, the function $1+v$ is nonnegative. Using the strict maximum principle, we conclude that the function $1+v$ is strictly positive everywhere. 

Our assumptions imply that 
\[|\bar{D}^m R|_{\bar{g}} \leq O(r^{-3-m-2\delta})\] 
for every nonnegative integer $m$. Standard results for linear PDE \cite{Meyers} imply that there exists a real number $\gamma$ such that 
\[|\bar{D}^m(v - \gamma \, r^{-1})|_{\bar{g}} \leq O(r^{-1-m-2\hat{\delta}})\] 
for every nonnegative integer $m$, where $\hat{\delta} \in (0,\delta)$. 

\begin{proposition}
\label{leading.term.in.asymptotic.expansion.of.v.3D}
We have $4\gamma < \beta$, where $\beta$ denotes the coefficient in the asymptotic expansion of $\rho$ and $\gamma$ denotes the coefficient in the asymptotic expansion of $v$. 
\end{proposition}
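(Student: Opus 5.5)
The plan is to test Proposition \ref{quadratic.form.3D} (with $a=1$) on a function built from the positive solution $1+v$ of (\ref{pde.for.v.3D}), corrected near infinity so that it matches $\rho^{\frac{1}{2}}$. The PDE will evaluate the quadratic form as $-4\pi\gamma$ up to errors that can be made arbitrarily small. The term $-\frac{1}{4}\int_M Q\,F^2$ will then supply a strictly positive gap, which is what makes the inequality strict.

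\textbf{Step 1: the energy of $1+v$.} Write $I(F)=\int_M |dF|^2+\frac18\int_M R\,F^2$. Integrate by parts over $\{r\le s\}$ and use $\Delta(1+v)=\frac18 R(1+v)$. This leaves only the boundary flux
\[\int_{\{r=s\}}(1+v)\,\partial_r v=-4\pi\gamma+o(1),\]
using $v=\gamma r^{-1}+O(r^{-1-2\hat\delta})$ and $\partial_r v=-\gamma r^{-2}+O(r^{-2-2\hat\delta})$. Hence $I(1+v)=-4\pi\gamma$. Note that $|dv|^2\in L^1$ and $R(1+v)^2\in L^1$, so this quantity is well defined.

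\textbf{Step 2: the correction.} Near infinity we have $\rho^{\frac12}-1-v=c\,r^{-1}+O(r^{-1-2\hat\delta})$ with $c=\frac{\beta}{2}-\gamma$. For large $L$, let $\psi_L$ be a smoothing of the radial function equal to $c\,r^{-1}$ for $r\ge L$ and to the constant $c\,L^{-1}$ elsewhere. Set
\[F_L=1+v+\psi_L+\eta_L,\]
where $\eta_L$ equals the remainder $\rho^{\frac12}-1-v-c\,r^{-1}$ for $r\ge 2L$ and is cut off inside. Then $F_L=\rho^{\frac12}$ near infinity, so Proposition \ref{quadratic.form.3D} applies and gives
\[I(F_L)-\frac14\int_M Q\,F_L^2\ge -\pi\beta.\]
We expand $I(F_L)=I(1+v)+2B(1+v,\psi_L+\eta_L)+I(\psi_L+\eta_L)$, where $B$ is the bilinear form of $I$.
\begin{itemize}
\item The cross term $B$ reduces via (\ref{pde.for.v.3D}) to a flux at infinity, namely $\lim_{s\to\infty}\int_{\{r=s\}}(\psi_L+\eta_L)\,\partial_r v$. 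This vanishes, since the integrand is $O(s^{-3})$ against area $O(s^2)$.
\item For $\psi_L$ we have $\int_M|d\psi_L|^2\approx 4\pi c^2/L$. Moreover $\int_M |R|\,\psi_L^2\le C\int_M |R|\,L^{-2}+C\int_{\{r\ge L\}}|R|\,r^{-2}$, which tends to $0$.
\item For $\eta_L$, the energy is $\int_M|d\eta_L|^2\lesssim\int_L^\infty r^{-4-4\hat\delta}r^2\,dr\to0$, and the same decay handles the $R$-term.
\end{itemize}
Altogether $I(F_L)=-4\pi\gamma+o(1)$ as $L\to\infty$.

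\textbf{Step 3: strictness.} Fix a compact set $K$ with nonempty interior. There $F_L\to 1+v$ uniformly as $L\to\infty$, and $1+v>0$ and $Q>0$ on $K$. Hence $\int_M Q\,F_L^2\ge\int_K Q\,F_L^2\ge\theta>0$ uniformly in $L$. Combining,
\[-4\pi\gamma+o(1)-\frac{\theta}{4}\ge-\pi\beta,\]
and letting $L\to\infty$ yields $4\gamma\le\beta-\frac{\theta}{\pi}<\beta$.

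The main obstacle is Step 2. One must check that the correction $\psi_L$, which is forced to have the same slow decay $c/r$ as $\rho^{\frac12}-1-v$, nevertheless has vanishing energy as $L\to\infty$. One must also check that it interacts with $1+v$ only through a vanishing boundary flux. The key points are the $1/L$ scaling of the Dirichlet energy of $c\min(r,L)^{-1}$ and the fact that (\ref{pde.for.v.3D}) kills the bulk cross term. Smoothing the kink of $\psi_L$ at $r=L$ changes the energy by an arbitrarily small amount.
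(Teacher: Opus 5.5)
Your proof is correct and follows the paper's argument. The paper likewise applies Proposition \ref{quadratic.form.3D} with $a=1$ to $F=1+v$, which it justifies by a ``standard approximation argument'' that your correction $\psi_L+\eta_L$, with $O(1/L)$ Dirichlet energy, makes explicit; it then uses (\ref{pde.for.v.3D}) to identify the energy with the flux $-4\pi\gamma$ and gets strictness from $\int_M Q\,(1+v)^2>0$. One minor arithmetic slip: the final bound should read $4\gamma \le \beta - \frac{\theta}{4\pi}$ rather than $\beta-\frac{\theta}{\pi}$, which is still strict.
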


\textbf{Proof.} 
Proposition \ref{quadratic.form.3D} implies that 
\[\int_M |dF|^2 + \frac{1}{8} \int_M R \, F^2 - \frac{1}{4} \int_M Q \, F^2 \geq -\pi \beta\] 
for every smooth function $F$ on $M$ with the property that $F=\rho^{\frac{1}{2}}$ near infinity. Note that $\rho = 1+O(r^{-1})$ and $1+v = 1+O(r^{-1})$, and we have corresponding estimates for all the higher derivatives. By a standard approximation argument, the preceding inequality holds for the function $F=1+v$. This gives 
\[\int_M |dv|^2 + \frac{1}{8} \int_M R \, (1+v)^2 - \frac{1}{4} \int_M Q \, (1+v)^2 \geq -\pi \beta.\] 
On the other hand, using (\ref{pde.for.v.3D}), we obtain 
\[\int_M |dv|^2 + \frac{1}{8} \int_M R \, (1+v)^2 = \int_M \text{\rm div}((1+v) \, dv) = -4\pi \gamma.\] 
Finally, since the function $Q$ is strictly positive and the function $1+v$ is strictly positive, we know that 
\[\int_M Q \, (1+v)^2 > 0.\]
Putting these facts together, we conclude that $-4\pi \gamma > -\pi \beta$. This completes the proof of Proposition \ref{leading.term.in.asymptotic.expansion.of.v.3D}. \\ 

It follows from (\ref{pde.for.v.3D}) that the conformal metric $(1+v)^4 \, g$ has zero scalar curvature. Moreover, 
\[(1+v)^4 \, g = (1+(\alpha+4\gamma) \, r^{-1}) \, \bar{g} + O(r^{-1-\hat{\delta}})\] 
near infinity. We now apply the classical positive mass theorem of Schoen and Yau to the conformal metric $(1+v)^4 \, g$. This implies $\alpha + 4\gamma \geq 0$. Combining this inequality with Proposition \ref{leading.term.in.asymptotic.expansion.of.v.3D}, we conclude that $\alpha + \beta > 0$. Therefore, the $3$-dataset $(M,g,\rho,Q)$ has strictly positive mass in the sense of Definition \ref{mass.of.dataset}. This completes the proof of Theorem \ref{pmt} in the special case $n=3$.

\section{Proof of Theorem \ref{pmt} for $n \geq 4$}

In this section, we complete the proof of Theorem \ref{pmt}. Let us fix an integer $n \geq 4$. We assume that Theorem \ref{pmt} holds for all $(n-1)$-datasets. We will show that Theorem \ref{pmt} holds for all $n$-datasets. We argue by contradiction. Suppose that $(M,g,\rho,Q)$ is an $n$-dataset with nonpositive mass. In other words,  
\begin{equation} 
\label{assumption}
(n-1)\alpha + 2\beta \leq 0, 
\end{equation} 
where $\alpha$ denotes the coefficient in the asymptotic expansion of the metric and $\beta$ denotes the coefficient in the asymptotic expansion of $\rho$. Let $E_0$ denote the asymptotically flat end of $M$. 

\subsection{A result from Lesourd-Unger-Yau's work} In this subsection, we recall an important construction from the work of Lesourd-Unger-Yau. 

\begin{lemma}[cf. Lesourd-Unger-Yau \cite{Lesourd-Unger-Yau}, Proposition 3.1]
\label{Phi}
We can find an open, connected domain $E$ with smooth boundary, a smooth function $\Phi$ defined on $E$, and a smooth function $\hat{Q}$ defined on $E$ with the following properties: 
\begin{itemize}
\item The closure of $E_0$ is contained in $E$.
\item The complement $E \setminus E_0$ is a bounded subset of $(M,g)$. 
\item $\Phi=0$ and $\hat{Q} = \frac{1}{2} \, Q$ at each point in $E_0$.
\item $\Phi \leq 0$ and $\hat{Q} > 0$ at each point in $E$.
\item $\Phi \to -\infty$ on the boundary $\partial E$. 
\item $Q + \frac{1}{2} \, \Phi^2 - 2 \, |d\Phi| \geq 2\hat{Q}$ at each point in $E$. 
\end{itemize}
\end{lemma}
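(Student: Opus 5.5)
The plan is to obtain $\Phi$ as a function of a smoothed distance to $E_0$ that solves a Riccati-type differential inequality. The positivity of $Q$ on a fixed compact collar gets $\Phi$ started, and the quadratic term $\frac12\Phi^2$ then forces blow-up within a bounded distance. This is the shielding construction of Lesourd-Unger-Yau.

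\textbf{Distance function and collar.} Let $d$ be a smoothing of the distance to $\overline{E_0}$, chosen so that $d=0$ on a neighborhood of $\overline{E_0}$, $|dd| \leq 2$ everywhere, and $d$ is comparable to the true distance. Since $(M,g)$ is complete, every sublevel set $\{d \leq L\}$ is bounded, and its part outside $E_0$ is relatively compact. Fix the compact collar $K_1 = \{d \leq 2\} \setminus E_0$. Because $Q$ is positive and smooth, $q_1 := \min_{K_1} Q > 0$. This constant is fixed once and for all, before $\Phi$ is designed, which is how we avoid any circularity between the size of the domain and the lower bound for $Q$.

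\textbf{Profile ODE.} Define $\Phi = -\psi(d)$, where $\psi:[0,T) \to [0,\infty)$ is smooth, nondecreasing, $\psi = 0$ on $[0,\frac12]$, and $\psi(t)\to\infty$ as $t \to T$. We require
\[\psi' \leq \tfrac{q_1}{16} \text{ on } [0,2], \qquad \psi' \leq \tfrac{1}{16}\,\psi^2 + \tfrac{q_1}{16}\,\chi_{[0,2]} \text{ on } [0,T).\]
Concretely, let $\psi$ rise with slope at most $q_1/16$ on $[\frac12,2]$, reaching $\psi(2)=c_0>0$ with $c_0$ comparable to $q_1$. Beyond $t=2$, let $\psi$ solve (a smoothing of) $\psi' = \psi^2/16$. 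This solution blows up at the finite time $T = 2 + 16/c_0$, which is independent of everything else. Set $E = \{d < T\}$, perturbing $T$ slightly (Sard) so that $\partial E$ is smooth, and take the connected component containing $E_0$. Then $E$ is open and connected, $\overline{E_0}\subset E$, $E\setminus E_0 \subset\{d<T\}$ is bounded, $\Phi\le 0$, $\Phi=0$ on $E_0$, and $\Phi\to-\infty$ on $\partial E$.

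\textbf{Checking the inequality.} Since $|d\Phi| = \psi'(d)\,|dd| \leq 2\psi'(d)$, we get $2|d\Phi| \leq 4\psi'$. On $\{d\leq 2\}$ this gives $2|d\Phi|\le q_1/4 \le Q/4$. On $\{d>2\}$ it gives $2|d\Phi| \le \psi^2/4 = \frac12\Phi^2$ minus a positive margin, once the profile equation is taken to be, say, $\psi'=\psi^2/32$ with $T$ adjusted accordingly. In both regions
\[Q + \tfrac12\Phi^2 - 2|d\Phi| \geq \tfrac34 Q + \tfrac14\Phi^2 \quad\text{(near } \{d\le 2\}\text{)}, \qquad \geq Q + \tfrac14\Phi^2 \quad\text{(beyond)}.\]
We now set $\hat Q = \frac12 Q$ where $\psi = 0$, and $\hat Q = \frac38 Q + \frac18\Phi^2$ elsewhere, interpolating with a cutoff in $d$ on $[0,\frac12]$ where $\Phi \equiv 0$. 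There $\frac12 Q \le Q$, so the inequality holds with either choice. The result is smooth, positive, equal to $\frac12 Q$ on $E_0$, and satisfies $Q+\frac12\Phi^2-2|d\Phi|\ge 2\hat Q$.

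\textbf{Main obstacle.} The only delicate point is the order of choices. The lower bound $q_1$ must be fixed on a collar of predetermined size before the blow-up time is known, and the blow-up phase must use only the $\frac12\Phi^2$ term, not $Q$. Otherwise the domain on which we need a lower bound for $Q$ grows as that lower bound shrinks. The two-phase profile above resolves this. The remaining care is routine: smoothness of $\psi$ at the junctions and the factor $|dd|\le 2$ are absorbed by adjusting constants.
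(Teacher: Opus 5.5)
Your proposal is correct and follows essentially the paper's construction: $\Phi$ is a Riccati-type profile of a smoothed distance $\sigma$ with $|d\sigma|\le 2$. The positive minimum of $Q$ on a collar of fixed width (chosen before the blow-up time) absorbs $2|d\Phi|$ in the transition region, and the $\frac12\Phi^2$ term alone absorbs it in the blow-up region. The paper's product form $\varphi(s) = -\frac{8}{s_1+s_0-s}\,\chi(s/s_0)$ settles the junction smoothing you call routine, and its choice $\hat Q = \frac12 Q + \frac14\varphi^2 + 2\varphi'$ (exactly half the slack, since $|\varphi'| = \frac18\varphi^2$ beyond the collar) avoids your interpolation; also, $\psi' = \psi^2/16$ already leaves a margin of $\frac14\Phi^2$, so the detour to $\psi^2/32$ is unnecessary.
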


\textbf{Proof.} 
Let us fix positive real numbers $s_0$ and $s_1$ such that $s_1 \geq s_0$ and 
\[Q(x) > \frac{128}{s_1 s_0}\] 
for each point $x \in \mathcal{N}_{(M,g)}(E_0,2s_0) \setminus E_0$. Let us fix a smooth function $\chi: \mathbb{R} \to [0,1]$ such that $\chi=0$ on $[0,\frac{1}{2}]$, $\chi=1$ on $[1,\infty)$, and $0 \leq \chi' \leq 3$. We define a smooth function $\varphi: [0,s_1+s_0) \to (-\infty,0]$ by 
\[\varphi(s) = -\frac{8}{s_1+s_0-s} \, \chi \Big ( \frac{s}{s_0} \Big )\] 
for all $s \in [0,s_1+s_0)$. Clearly, $\varphi$ is monotone decreasing, and $\varphi(s) \to -\infty$ as $s \to s_1+s_0$. Moreover, 
\begin{align*} 
|\varphi'(s)| 
&= \frac{8}{(s_1+s_0-s)s_0} \, \chi' \Big ( \frac{s}{s_0} \Big ) + \frac{8}{(s_1+s_0-s)^2} \, \chi \Big ( \frac{s}{s_0} \Big ) \\ 
&\leq \frac{24}{s_1s_0} + \frac{8}{s_1^2} \\ 
&\leq \frac{32}{s_1s_0}
\end{align*}
for all $s \in [0,s_0)$ and 
\[|\varphi'(s)| = \frac{1}{8} \, \varphi(s)^2\] 
for all $s \in [s_0,s_1+s_0)$.

We can find a nonnegative smooth function $\sigma$ such that $\sigma = 0$ on $E_0$, $|\sigma - d_{(M,g)}(\cdot,E_0)| < s_0$, and $|d\sigma| \leq 2$. We may further assume that $s_1+s_0$ is a regular value of $\sigma$. We claim that 
\begin{equation} 
\label{key.inequality} 
Q(x) + \frac{1}{2} \, \varphi(\sigma(x))^2 - 4 \, |\varphi'(\sigma(x))| > 0 
\end{equation} 
for each point $x \in \{\sigma < s_1+s_0\}$. To prove this, we distinguish three cases: 

\textit{Case 1:} Suppose first that $x \in E_0$. In this case, 
\[Q(x) + \frac{1}{2} \, \varphi(\sigma(x))^2 - 4 \, |\varphi'(\sigma(x))| = Q(x) > 0.\] 

\textit{Case 2:} Suppose that $x \in \{\sigma < s_0\} \setminus E_0$. Then $x \in \mathcal{N}_{(M,g)}(E_0,2s_0) \setminus E_0$. Using the inequality $Q(x) > \frac{128}{s_1s_0}$, we obtain 
\[Q(x) + \frac{1}{2} \, \varphi(\sigma(x))^2 - 4 \, |\varphi'(\sigma(x))| > \frac{128}{s_1s_0} - 4 \, |\varphi'(\sigma(x))| \geq 0.\] 

\textit{Case 3:} Suppose that $x \in \{s_0 \leq \sigma < s_1+s_0\}$. Using the inequality $Q(x) > 0$, we obtain 
\[Q(x) + \frac{1}{2} \, \varphi(\sigma(x))^2 - 4 \, |\varphi'(\sigma(x))| > \frac{1}{2} \, \varphi(\sigma(x))^2 - 4 \, |\varphi'(\sigma(x))| = 0.\] 
This proves (\ref{key.inequality}). 

Let $E$ denote the connected component of the set $\{\sigma < s_1+s_0\}$ that contains the set $E_0$. Then $E \subset \mathcal{N}_{(M,g)}(E_0,s_1+2s_0)$. In particular, the set $E \setminus E_0$ is a bounded subset of $(M,g)$. We define a smooth function $\Phi$ on $E$ by 
\[\Phi(x) = \varphi(\sigma(x))\] 
for $x \in E$. Clearly, $\Phi = 0$ at each point in $E_0$, $\Phi \leq 0$ at each point in $E$, and $\Phi \to -\infty$ on the boundary $\partial E$. Finally, we define a smooth function $\hat{Q}$ on $E$ by 
\[\hat{Q}(x) = \frac{1}{2} \, Q(x) + \frac{1}{4} \, \varphi(\sigma(x))^2 + 2 \, \varphi'(\sigma(x))\] 
for $x \in E$. It follows from (\ref{key.inequality}) that $\hat{Q}(x) > 0$ for each point $x \in E$. Moreover, 
\begin{align*} 
&Q(x) + \frac{1}{2} \, \Phi(x)^2 - 2 \, |d\Phi(x)| - 2 \, \hat{Q}(x) \\ 
&= Q(x) + \frac{1}{2} \, \varphi(\sigma(x))^2 - 2 \, |\varphi'(\sigma(x))| \, |d\sigma(x)| - 2 \, \hat{Q}(x) \\ 
&= 2 \, |\varphi'(\sigma(x))| \, (2 - |d\sigma(x)|) \\ 
&\geq 0 
\end{align*} 
for each point $x \in E$. This completes the proof of Lemma \ref{Phi}. 

\begin{remark}
In Lemma \ref{Phi}, we allow the possibility that $\partial E = \emptyset$. 
\end{remark}

In the following, we fix an open, connected domain $\hat{E}$ with smooth boundary such that the closure of $E$ is contained in $\hat{E}$ and the complement $\hat{E} \setminus E$ is a bounded subset of $(M,g)$. \textbf{From now on, we will work exclusively on the closure of the domain $\hat{E}$.}

\subsection{Solving a linear PDE with Dirichlet boundary condition on the enlarged domain $\hat{E}$} In this subsection, we construct a solution of a certain linear PDE on $\hat{E}$ with Dirichlet boundary condition. We again follow the arguments in Eichmair-Huang-Lee-Schoen \cite{Eichmair-Huang-Lee-Schoen} and Carlotto \cite{Carlotto}. Let us fix a nonnegative smooth function $\omega$ such that $\omega$ is supported in $E_0$ and $\omega = r^{-2}$ near infinity. By Hardy's inequality, we can find a positive constant $\kappa$ such that 
\begin{equation} 
\label{Hardy.inequality}
\int_{\hat{E}} \rho \, |df|^2 + \frac{1}{2} \int_{\hat{E}} \rho \, Q \, f^2 \geq \kappa \int_{\hat{E}} \rho \, \omega \, f^2 
\end{equation} 
for every smooth function $f$ on $\hat{E}$ that vanishes near infinity. Since the function 
\[\kappa \, \omega + R - 2 \, \Delta \log \rho - \frac{n+1}{n+2} \, |d\log \rho|^2\] 
is positive near infinity, we can find a large constant $\Lambda>4$ such that 
\begin{equation} 
\label{choice.of.Lambda}
(\Lambda-4) \, Q + \kappa \, \omega + R - 2 \, \Delta \log \rho - \frac{n+1}{n+2} \, |d\log \rho|^2 \geq 0 
\end{equation}
at each point on $\hat{E}$.

\begin{proposition}[Coercivity]
\label{coercivity}
Suppose that $f$ is a smooth function on $\hat{E}$ such that $f=0$ on $\partial \hat{E}$ and $f$ vanishes near infinity. Then 
\begin{align*} 
&\int_{\hat{E}} \rho \, |df|^2 + \frac{1}{2} \int_{\hat{E}} \rho \, \Big ( R - 2 \, \Delta \log \rho - \frac{n+1}{n+2} \, |d\log \rho|^2 - Q \Big ) \, f^2 \\
&\geq \frac{1}{2\Lambda} \int_{\hat{E}} \rho \, (Q + \kappa \, \omega) \, f^2. 
\end{align*}
\end{proposition}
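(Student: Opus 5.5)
We mimic the proof of Proposition \ref{coercivity.3D}. Write
\[\mathcal{Q}(f) = \int_{\hat{E}} \rho \, |df|^2 + \frac{1}{2} \int_{\hat{E}} \rho \, \Big ( R - 2 \, \Delta \log \rho - \frac{n+1}{n+2} \, |d\log \rho|^2 \Big ) \, f^2.\]
The first step is to note that the dataset inequality in Definition \ref{dataset} applies to $f$. Extend $f$ by zero outside $\hat{E}$; this is legitimate since $f=0$ on $\partial \hat{E}$, although the extension is only Lipschitz. The extended function has $\{f \neq 0\} \setminus E_0$ bounded, since $\hat{E} \setminus E_0$ is bounded. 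It also satisfies $\{f \neq 0\} \cap E_0$ bounded, since $f$ vanishes near infinity, so we may take $a=0$. A standard approximation by smooth functions then gives $\mathcal{Q}(f) \geq \int_{\hat{E}} \rho \, Q \, f^2$.

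The key step is the algebraic decomposition
\begin{align*}
&\mathcal{Q}(f) - \frac{1}{2} \int_{\hat{E}} \rho \, Q \, f^2 - \frac{1}{2\Lambda} \int_{\hat{E}} \rho \, (Q+\kappa\,\omega) \, f^2 \\
&= \frac{\Lambda-1}{\Lambda} \, \bigg ( \mathcal{Q}(f) - \int_{\hat{E}} \rho \, Q \, f^2 \bigg ) \\
&+ \frac{1}{\Lambda} \, \bigg ( \int_{\hat{E}} \rho \, |df|^2 + \frac{1}{2} \int_{\hat{E}} \rho \, Q \, f^2 - \kappa \int_{\hat{E}} \rho \, \omega \, f^2 \bigg ) \\
&+ \frac{1}{2\Lambda} \int_{\hat{E}} \rho \, \Big ( (\Lambda-4) \, Q + \kappa\,\omega + R - 2 \, \Delta\log\rho - \frac{n+1}{n+2} \, |d\log\rho|^2 \Big ) \, f^2 .
\end{align*}
To verify this, match coefficients on both sides.
\begin{itemize}
\item The $|df|^2$ terms have coefficient $\frac{\Lambda-1}{\Lambda} + \frac{1}{\Lambda} = 1$.
\item The $(R - \dots)$ terms have coefficient $\frac{\Lambda-1}{2\Lambda} + \frac{1}{2\Lambda} = \frac{1}{2}$.
\item The $\omega$ terms have coefficient $-\frac{\kappa}{\Lambda} + \frac{\kappa}{2\Lambda} = -\frac{\kappa}{2\Lambda}$, as required.
\item The $Q$ terms have coefficient $-\frac{\Lambda-1}{\Lambda} + \frac{1}{2\Lambda} + \frac{\Lambda-4}{2\Lambda} = -\frac{1}{2} - \frac{1}{2\Lambda}$, which matches the left side.
\end{itemize}

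Each term on the right is nonnegative. The first term is nonnegative by the dataset inequality established in the first step. The second is nonnegative by the Hardy inequality (\ref{Hardy.inequality}). The third is nonnegative by (\ref{choice.of.Lambda}), using $\rho>0$. This yields the claim.

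The only delicate point is the first step. We must justify applying the dataset inequality to the zero extension of $f$, which is merely Lipschitz across $\partial\hat{E}$. We would handle this by mollifying, or by approximating $f$ in $H^1$ by smooth functions compactly supported in $\hat{E}$. All integrands are integrable, so the terms converge.
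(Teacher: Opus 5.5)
Your proposal is correct and follows the paper's proof essentially verbatim. It uses the same three-term decomposition, with nonnegativity coming from the dataset inequality, the Hardy inequality (\ref{Hardy.inequality}), and the choice of $\Lambda$ in (\ref{choice.of.Lambda}). Your extra step, extending $f$ by zero across $\partial\hat{E}$ and smoothing so the dataset inequality applies, is a detail the paper leaves implicit; it is sound because the extension has compact support in $M$.
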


\textbf{Proof.} 
We compute 
\begin{align*}
&\int_{\hat{E}} \rho \, |df|^2 + \frac{1}{2} \int_{\hat{E}} \rho \, \Big ( R - 2 \, \Delta \log \rho - \frac{n+1}{n+2} \, |d\log \rho|^2 - Q \Big ) \, f^2 \\ 
&- \frac{1}{2\Lambda} \int_{\hat{E}} \rho \, (Q + \kappa \, \omega) \, f^2 \\ 
&= \frac{\Lambda-1}{\Lambda} \, \bigg ( \int_{\hat{E}} \rho \, |df|^2 + \frac{1}{2} \int_{\hat{E}} \rho \, \Big ( R - 2 \, \Delta \log \rho - \frac{n+1}{n+2} \, |d\log \rho|^2 - 2Q \Big ) \, f^2 \bigg ) \\ 
&+ \frac{1}{\Lambda} \, \bigg ( \int_{\hat{E}} \rho \, |df|^2 + \frac{1}{2} \int_{\hat{E}} \rho \, Q \, f^2 - \kappa \int_{\hat{E}} \rho \, \omega \, f^2 \bigg ) \\ 
&+ \frac{1}{2\Lambda} \int_{\hat{E}} \rho \, \Big ( (\Lambda-4) \, Q + \kappa \, \omega + R - 2 \, \Delta \log \rho - \frac{n+1}{n+2} \, |d\log \rho|^2 \Big ) \, f^2. 
\end{align*} 
The first term on the right hand side is nonnegative since $(M,g,\rho,Q)$ is an $n$-dataset. The second term on the right hand side is nonnegative by the Hardy inequality (\ref{Hardy.inequality}). The third term on the right hand side is nonnegative in view of (\ref{choice.of.Lambda}). This completes the proof of Proposition \ref{coercivity}. \\

Let $\mathcal{H}$ denote the set of all functions $f \in H_{\text{\rm loc}}^1(\hat{E})$ with the property that 
\[\int_{\hat{E}} \rho \, |df|^2 + \int_{\hat{E}} \rho \, (Q+\omega) \, f^2 < \infty\] 
and the boundary trace of $f$ along $\partial \hat{E}$ vanishes. We define 
\[\|f\|_{\mathcal{H}}^2 = \int_{\hat{E}} \rho \, |df|^2 + \int_{\hat{E}} \rho \, (Q+\omega) \, f^2\] 
for all $f \in \mathcal{H}$. 

Let us fix a nonnegative smooth function $v_0$ on $\hat{E}$ such that $v_0=0$ on $\partial \hat{E}$ and $v_0=1$ near infinity.

\begin{proposition}
\label{minimizer.of.quadratic.form}
We can find a function $v \in \mathcal{H}$ with the property that $v$ minimizes the functional 
\begin{align*} 
&\int_{\hat{E}} \rho \, |dv|^2 + \frac{1}{2} \int_{\hat{E}} \rho \, \Big ( R - 2 \, \Delta \log \rho - \frac{n+1}{n+2} \, |d\log \rho|^2 - Q \Big ) \, v^2 \\ 
&+ 2 \int_{\hat{E}} \rho \, \langle dv_0,dv \rangle + \int_{\hat{E}} \rho \, \Big ( R - 2 \, \Delta \log \rho - \frac{n+1}{n+2} \, |d\log \rho|^2 - Q \Big ) \, v_0 \, v 
\end{align*}
among all functions $v \in \mathcal{H}$. Moreover, we can choose $v$ so that $v_0+v \geq 0$. 
\end{proposition}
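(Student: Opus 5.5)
The plan follows the proof of Proposition \ref{minimizer.of.quadratic.form.3D}: extend the coercivity estimate to all of $\mathcal{H}$, use it to minimize, and then truncate the minimizer. Write
\[W = R - 2 \, \Delta \log \rho - \frac{n+1}{n+2} \, |d\log \rho|^2 - Q.\]
Denote by $\mathcal{B}(v)$ the quadratic part of the functional and by $\mathcal{L}(v) = 2 \int_{\hat{E}} \rho \, \langle dv_0,dv \rangle + \int_{\hat{E}} \rho \, W \, v_0 \, v$ its linear part.

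\textbf{Step 1: coercivity on $\mathcal{H}$.} Functions that are smooth, vanish on $\partial \hat{E}$ and vanish near infinity are dense in $\mathcal{H}$. To see this, truncate with cutoffs $\eta(r/L)$ and mollify. The cutoff error is controlled because $|d\eta(r/L)|^2 \leq C \, r^{-2} \leq C \, \omega$ on the annulus where $d\eta(r/L) \neq 0$. Hence the cutoff contributes at most $C \int_{\{L \leq r \leq 2L\}} \rho \, \omega \, f^2$, which tends to $0$. Since $|W| \leq C(Q+\omega)$ on $\hat{E}$ (near infinity $W = O(r^{-n-2\delta}) \leq C \, \omega$, and $\hat{E} \setminus E_0$ is bounded), $\mathcal{B}$ is continuous on $\mathcal{H}$. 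Passing to the limit in Proposition \ref{coercivity} then gives
\[\mathcal{B}(f) \geq \frac{1}{2\Lambda} \int_{\hat{E}} \rho \, (Q+\kappa \, \omega) \, f^2\]
for all $f \in \mathcal{H}$. Combining this with $\mathcal{B}(f) \geq \int \rho \, |df|^2 - C \int \rho \, (Q+\omega) \, f^2$, in a convex combination, yields $\mathcal{B}(f) \geq c \, \|f\|_{\mathcal{H}}^2$.

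\textbf{Step 2: the linear term is bounded.} $dv_0$ is compactly supported, so $|\int \rho \, \langle dv_0,dv \rangle| \leq C \, \|v\|_{\mathcal{H}}$. Since $v_0 = 1$ near infinity, the other term needs $\int \rho \, |W| \, |v| \leq \|v\|_{\mathcal{H}} \, ( \int \rho \, W^2/(Q+\omega) )^{1/2}$. Near infinity $W^2/\omega \leq C \, r^{2-2n-4\delta}$, which is integrable since $n \geq 3$. On the bounded part, $Q$ is bounded below, so the integral is finite there too. Thus $\mathcal{L}$ is a bounded linear functional on the Hilbert space $\mathcal{H}$. The functional $\mathcal{B} + \mathcal{L}$ is therefore coercive, continuous and strictly convex, and Lax--Milgram (or the direct method) produces a unique minimizer $v$. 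This step is where I expect the only real care is needed: checking the decay of $W$ against the weight $\omega$, i.e. the $L^2$-dual bound.

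\textbf{Step 3: nonnegativity.} The functional equals $\mathcal{B}(v_0+v) - \mathcal{B}(v_0)$, formally; more precisely, it equals $\mathcal{B}(v_0+v)$ up to a $v$-independent constant after handling the non-integrable pieces via cutoffs. Replace $v$ by $\tilde{v} = |v_0+v| - v_0$. We have $\tilde{v} \in \mathcal{H}$: the trace vanishes because $v_0 = 0$ on $\partial \hat{E}$, and $|\tilde{v}| \leq |v|$ and $|d\tilde{v}| \leq |dv| + 2 \, |dv_0|$. Moreover $|d|v_0+v|| = |d(v_0+v)|$ a.e. and $(v_0+v)^2$ is unchanged, so the functional takes the same value at $\tilde{v}$. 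To see this rigorously, compute the difference of the functional at $\tilde{v}$ and at $v$ directly: it reduces to a difference of $\mathcal{B}$-type integrals of $(v_0+\tilde{v})$ and $(v_0+v)$. These integrands agree pointwise, and the difference is supported where $v_0+v<0$, which has finite measure weighted appropriately. Hence $\tilde{v}$ is also a minimizer, and $v_0+\tilde{v} \geq 0$. By uniqueness, in fact $\tilde{v} = v$.
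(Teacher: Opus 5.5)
Your proposal is correct and follows the same route as the paper. You extend the coercivity of Proposition \ref{coercivity} to all of $\mathcal{H}$ and obtain the minimizer by the direct method, supplying the details the paper leaves implicit: density, the convex-combination upgrade to $\|\cdot\|_{\mathcal{H}}$-coercivity, and boundedness of the linear term. You then replace $v$ by $|v_0+v|-v_0$, exactly as the paper does.

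One small simplification is available. Since $W \in L^1$ and $dv_0$ has compact support, $\mathcal{B}(v_0)$ is finite. The identity $\mathcal{B}(v)+\mathcal{L}(v) = \mathcal{B}(v_0+v)-\mathcal{B}(v_0)$ therefore holds exactly, and no cutoff argument is needed in Step 3.
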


\textbf{Proof.} 
It follows from Proposition \ref{coercivity} that 
\begin{align*} 
&\int_{\hat{E}} \rho \, |df|^2 + \frac{1}{2} \int_{\hat{E}} \rho \, \Big ( R - 2 \, \Delta \log \rho - \frac{n+1}{n+2} \, |d\log \rho|^2 - Q \Big ) \, f^2 \\
&\geq \frac{1}{2\Lambda} \int_{\hat{E}} \rho \, (Q + \kappa \, \omega) \, f^2. 
\end{align*} 
for all $f \in \mathcal{H}$. Using this coercivity property, the existence of a minimizer follows easily. By replacing $v$ by $|v_0+v|-v_0$, we can arrange that $v_0+v$ is nonnegative. This completes the proof of Proposition \ref{minimizer.of.quadratic.form}. \\

Let $v$ denote the minimizer constructed in Proposition \ref{minimizer.of.quadratic.form}. By elliptic regularity theory, $v$ is a smooth solution of the PDE
\begin{align} 
\label{pde.for.v}
&-\Delta (v_0+v) - \langle d\log \rho,d(v_0+v) \rangle \notag \\ 
&+ \frac{1}{2} \, \Big ( R - 2 \, \Delta \log \rho - \frac{n+1}{n+2} \, |d\log \rho|^2 - Q \Big ) \, (v_0+v) = 0 
\end{align}
on the domain $\hat{E}$ with Dirichlet boundary condition $v=0$ on $\partial \hat{E}$. Since $v \in \mathcal{H}$, the function $v_0+v$ does not vanish identically. Moreover, the function $v_0+v$ is nonnegative. Using the strict maximum principle, we conclude that the function $v_0+v$ is strictly positive everywhere. 

Our assumptions imply that 
\[\Big | \bar{D}^m \Big ( R - 2 \, \Delta \log \rho - \frac{n+1}{n+2} \, |d\log \rho|^2 - Q \Big ) \Big |_{\bar{g}} \leq O(r^{-n-m-2\delta})\] 
for every nonnegative integer $m$. Standard results for linear PDE \cite{Meyers} imply that there exists a real number $\gamma$ such that 
\[|\bar{D}^m(v - \gamma \, r^{2-n})|_{\bar{g}} \leq O(r^{2-n-m-2\hat{\delta}})\] 
for every nonnegative integer $m$, where $\hat{\delta} \in (0,\delta)$. 

\begin{proposition}
\label{leading.term.in.asymptotic.expansion.of.v}
We have $\gamma < 0$.
\end{proposition}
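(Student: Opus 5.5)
The plan is to argue exactly as in Proposition \ref{leading.term.in.asymptotic.expansion.of.v.3D}: test the defining inequality of an $n$-dataset against the solution itself, and compute the same quadratic expression a second way using the PDE (\ref{pde.for.v}) and the divergence theorem. Throughout, let $u = v_0 + v$. Then $u$ is smooth and strictly positive in $\hat{E}$, $u = 0$ on $\partial \hat{E}$, and $u = 1 + \gamma \, r^{2-n} + O(r^{2-n-2\hat{\delta}})$ near infinity, with the corresponding derivative bounds.

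\textbf{Step 1 (first evaluation, via the PDE).} Note that $-\Delta u - \langle d\log\rho, du\rangle = -\rho^{-1} \, \text{div}(\rho \, du)$. Multiplying (\ref{pde.for.v}) by $\rho \, u$ and using $\text{div}(\rho \, u \, du) = \rho \, |du|^2 + u \, \text{div}(\rho \, du)$, I obtain the pointwise identity
\[\rho \, |du|^2 + \frac{1}{2} \, \rho \, \Big ( R - 2 \, \Delta \log \rho - \frac{n+1}{n+2} \, |d\log \rho|^2 - Q \Big ) \, u^2 = \text{div}(\rho \, u \, du).\]
Now integrate over $\hat{E} \cap \{r \leq r_1\}$ and let $r_1 \to \infty$. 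The boundary term on $\partial \hat{E}$ vanishes because $u = 0$ there. On the sphere $\{r = r_1\}$ we have $\rho \to 1$, $u \to 1$ and $\partial_r u = -(n-2) \, \gamma \, r^{1-n} + O(r^{1-n-2\hat{\delta}})$. Hence the flux converges to $-(n-2) \, |S^{n-1}| \, \gamma$. Convergence of the volume integrals follows from the decay $|du| = O(r^{1-n})$ together with the $L^1$ bounds on $R$, $\Delta\log\rho$, $|d\log\rho|^2$ and $Q$. Therefore
\[\int_{\hat{E}} \rho \, |du|^2 + \frac{1}{2} \int_{\hat{E}} \rho \, \Big ( R - 2 \, \Delta \log \rho - \frac{n+1}{n+2} \, |d\log \rho|^2 - Q \Big ) \, u^2 = -(n-2) \, |S^{n-1}| \, \gamma.\]

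\textbf{Step 2 (second evaluation, via the dataset inequality).} Extend $u$ by $0$ to $M \setminus \hat{E}$. The result is Lipschitz, it vanishes outside the bounded set $\hat{E} \setminus E_0$ away from $E_0$, and it tends to $1$ at infinity. I would apply the inequality in Definition \ref{dataset} to this function, with $a = 1$. This gives
\[\int_{\hat{E}} \rho \, |du|^2 + \frac{1}{2} \int_{\hat{E}} \rho \, \Big ( R - 2 \, \Delta \log \rho - \frac{n+1}{n+2} \, |d\log \rho|^2 \Big ) \, u^2 \geq \int_{\hat{E}} \rho \, Q \, u^2.\]
Subtracting $\frac{1}{2} \int_{\hat{E}} \rho \, Q \, u^2$ from both sides and comparing with Step 1 yields
\[-(n-2) \, |S^{n-1}| \, \gamma \geq \frac{1}{2} \int_{\hat{E}} \rho \, Q \, u^2 > 0.\]
The final inequality is strict because $\rho$, $Q$ and $u$ are all strictly positive in $\hat{E}$. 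Since $n \geq 4$, this gives $\gamma < 0$.

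\textbf{Main obstacle.} The only delicate point is the approximation argument that justifies Step 2, because $u$ is neither smooth across $\partial \hat{E}$ nor exactly constant near infinity. I would handle it as follows.
\begin{itemize}
\item Near $\partial \hat{E}$, mollify the Lipschitz extension. The gradient term converges in $L^2$ and the potential terms are bounded on compact sets, so every integral converges.
\item Near infinity, replace $u$ by $\eta_k u + (1 - \eta_k)$, where $\eta_k$ is a cutoff equal to $1$ on $\{r \leq k\}$ and $0$ on $\{r \geq 2k\}$, with $|d\eta_k| \leq C/k$. The error gradient is $O(k^{-1} \cdot k^{2-n}) + O(k^{1-n})$ on a region of volume $O(k^n)$, so it is $O(k^{2-n})$ in $L^2$ and tends to $0$. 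The potential terms converge by dominated convergence, using the $L^1$ decay of the coefficients and the boundedness of $u$.
\end{itemize}
With this approximation in place, the inequality passes to the limit and the argument above is complete.
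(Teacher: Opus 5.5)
Your proposal is correct and follows the paper's proof. Both arguments evaluate the quadratic form at $u = v_0 + v$ in two ways: once through the PDE (\ref{pde.for.v}) and the divergence theorem, which gives $-(n-2)\,|S^{n-1}|\,\gamma$, and once through the $n$-dataset inequality, with strictness coming from $\int_{\hat{E}} \rho \, Q \, u^2 > 0$. Your version differs only in that it spells out the approximation step (extension by zero across $\partial \hat{E}$, mollification, and the cutoff near infinity), which the paper compresses into ``by approximation''.
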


\textbf{Proof.} 
Since $(M,g,\rho,Q)$ is an $n$-dataset, we know that 
\[\int_{\hat{E}} \rho \, |df|^2 + \frac{1}{2} \int_{\hat{E}} \rho \, \Big ( R - 2 \, \Delta \log \rho - \frac{n+1}{n+2} \, |d \log \rho|^2 - 2Q \Big ) \, f^2 \geq 0\] 
for every smooth function $f$ on $\hat{E}$ with the property that $f=0$ on $\partial \hat{E}$ and $f=1$ near infinity. By approximation, the preceding inequality also holds for the function $f = v_0+v$. This gives 
\[\int_{\hat{E}} \rho \, |d(v_0+v)|^2 + \frac{1}{2} \int_{\hat{E}} \rho \, \Big ( R - 2 \, \Delta \log \rho - \frac{n+1}{n+2} \, |d \log \rho|^2 - 2Q \Big ) \, (v_0+v)^2 \geq 0.\] 
On the other hand, using (\ref{pde.for.v}), we obtain 
\begin{align*} 
&\int_{\hat{E}} \rho \, |d(v_0+v)|^2 + \frac{1}{2} \int_{\hat{E}} \rho \, \Big ( R - 2 \, \Delta \log \rho - \frac{n+1}{n+2} \, |d \log \rho|^2 - Q \Big ) \, (v_0+v)^2 \\ 
&= \int_{\hat{E}} \text{\rm div}(\rho \, (v_0+v) \, d(v_0+v)) = -(n-2) \, |S^{n-1}| \, \gamma. 
\end{align*}
Note that there is no boundary term on $\partial \hat{E}$ since $v_0+v$ vanishes on $\partial \hat{E}$. Finally, since the function $Q$ is strictly positive and the function $v_0+v$ is strictly positive, we know that 
\[\int_{\hat{E}} \rho \, Q \, (v_0+v)^2 > 0.\] 
Putting these facts together, we conclude that $-(n-2) \, |S^{n-1}| \, \gamma > 0$. This completes the proof of Proposition \ref{leading.term.in.asymptotic.expansion.of.v}. \\ 

\textbf{From now on, we will work exclusively on the closure of the domain $E$.} We define $\hat{v} = v_0+v$ and $\hat{\rho} = \rho \, \hat{v}$. With this understood, $\hat{v}$ and $\hat{\rho}$ are strictly positive smooth functions on the closure of $E$. The function $\hat{v}$ satisfies 
\[|\bar{D}^m(\hat{v} - (1 + \gamma \, r^{2-n}))|_{\bar{g}} \leq O(r^{2-n-m-2\hat{\delta}})\] 
for every nonnegative integer $m$. Hence, if we put $\hat{\beta} = \beta+\gamma$, then the function $\hat{\rho}$ satisfies 
\[|\bar{D}^m(\hat{\rho} - (1 + \hat{\beta} \, r^{2-n}))|_{\bar{g}} \leq O(r^{2-n-m-2\hat{\delta}})\] 
for every nonnegative integer $m$. Using Proposition \ref{leading.term.in.asymptotic.expansion.of.v} and the inequality (\ref{assumption}), we obtain 
\begin{equation}
\label{consequence.of.assumption}
(n-1)\alpha + 2\hat{\beta} < 0. 
\end{equation}
Finally, the identity (\ref{pde.for.v}) gives 
\begin{align} 
\label{pde.for.hat.v}
&-\Delta \hat{v} - \langle d\log \rho,d\hat{v} \rangle \notag \\ 
&+ \frac{1}{2} \, \Big ( R - 2 \, \Delta \log \rho - \frac{n+1}{n+2} \, |d\log \rho|^2 - Q \Big ) \, \hat{v} = 0 
\end{align} 
on the domain $E$. This implies 
\begin{align} 
\label{pde.for.log.hat.v}
&-\Delta \log \hat{v} - |d \log \hat{v}|^2 - \langle d\log \rho,d\log \hat{v} \rangle \notag \\ 
&+ \frac{1}{2} \, \Big ( R - 2 \, \Delta \log \rho - \frac{n+1}{n+2} \, |d\log \rho|^2 - Q \Big ) = 0 
\end{align}
on the domain $E$.

\subsection{A family of hypersurfaces in the asymptotically flat end $E_0$ with positive $\hat{\rho}$-weighted mean curvature} Throughout this subsection, we identify the asymptotically flat end $E_0$ with the complement of the unit ball in $\mathbb{R}^n$. For $\lambda>0$ sufficiently large, we define two hypersurfaces $N_\lambda^+$ and $N_\lambda^-$ by 
\[N_\lambda^+ = \Big \{ x_n = \lambda - (n-3+\hat{\delta})^{-1} \, (x_1^2+\hdots+x_{n-1}^2+\lambda^2)^{-\frac{n-3+\hat{\delta}}{2}} \Big \}\] 
and 
\[N_\lambda^- = \Big \{ -x_n = \lambda - (n-3+\hat{\delta})^{-1} \, (x_1^2+\hdots+x_{n-1}^2+\lambda^2)^{-\frac{n-3+\hat{\delta}}{2}} \Big \}.\] 
We choose the unit normal vector field along $N_\lambda^+$ so that $dx_n(\nu_{N_\lambda^+}) > 0$ at each point on $N_\lambda^+$. We choose the unit normal vector field along $N_\lambda^-$ so that $dx_n(\nu_{N_\lambda^-}) < 0$ at each point on $N_\lambda^-$. The following proposition is similar to the classical work of Schoen and Yau.

\begin{proposition}
\label{weighted.mean.curvature}
If $\lambda>0$ is sufficiently large, then the hypersurface $N_\lambda^+$ satisfies 
\[H_{N_\lambda^+} + \langle \nabla \log \hat{\rho},\nu_{N_\lambda^+} \rangle > 0\] 
and the hypersurface $N_\lambda^-$ satisfies 
\[H_{N_\lambda^-} + \langle \nabla \log \hat{\rho},\nu_{N_\lambda^-} \rangle > 0.\] 
Here, the mean curvature and unit normal vector are computed with respect to the metric $g$. 
\end{proposition}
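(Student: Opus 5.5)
The plan is a direct computation in the asymptotically flat coordinates. Write $N_\lambda^+$ as the graph $x_n = u(x')$ over $x' = (x_1,\hdots,x_{n-1})$, where
\[u(x') = \lambda - s^{-1}\,(|x'|^2+\lambda^2)^{-\frac{s}{2}}, \qquad s = n-3+\hat{\delta}.\]
Throughout, $r^2 = |x'|^2 + x_n^2$. On $N_\lambda^\pm$ we have $|x_n| = \lambda + O(\lambda^{-s})$, so $r^2$ is comparable to $|x'|^2+\lambda^2$, and $r \geq \lambda/2$ on the whole hypersurface.

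The weighted mean curvature splits into three pieces:
\begin{itemize}
\item the Euclidean mean curvature $\bar{H}$;
\item the correction coming from the conformal part $(1+\alpha\,r^{2-n})\,\bar{g}$ of the metric;
\item the term $\langle \nabla \log\hat{\rho},\nu\rangle$.
\end{itemize}
The remainder $g - (1+\alpha r^{2-n})\bar{g} = O(r^{2-n-2\delta})$, together with the error in $\hat\rho$, contributes only $O(r^{1-n-2\hat{\delta}})$ (using $\hat\delta<\delta$). The case of $N_\lambda^-$ follows by the reflection $x_n \mapsto -x_n$, so I only discuss $N_\lambda^+$.

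\textbf{Step 1: the Euclidean term.} We have $\nabla u = (|x'|^2+\lambda^2)^{-\frac{s+2}{2}}\,x'$, so $|\nabla u| = O(\lambda^{-s-1})$ is tiny. Hence the Euclidean mean curvature with respect to the upward normal is $-\Delta u$ up to a relative error $O(|\nabla u|^2)$. A direct computation gives
\[-\Delta u = (|x'|^2+\lambda^2)^{-\frac{s+4}{2}}\,\big(\hat{\delta}\,|x'|^2 - (n-1)\lambda^2\big).\]
The sign of the bracket is where the choice $s = n-3+\hat{\delta}$ enters. This term is positive, of size about $\hat\delta\, r^{1-n-\hat\delta}$, when $|x'| \gg \lambda$. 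It is negative but bounded by $C\lambda^2 r^{-n-1-\hat\delta} \leq C\lambda^{1-\hat\delta} r^{-n}$ in the region $|x'| \lesssim \lambda$.

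\textbf{Step 2: the mass term.} For a conformal metric $e^{2w}\bar g$ we have $H = e^{-w}\big(\bar H + (n-1)\,\partial_{\bar\nu} w\big)$. Here $w = \tfrac12\alpha r^{2-n} + O(r^{2-2n})$ and $\bar\nu = \partial_n + O(\lambda^{-s-1})$. This gives a contribution $-\tfrac{(n-1)(n-2)}{2}\,\alpha\, x_n r^{-n}$. Similarly, $\langle\nabla\log\hat\rho,\nu\rangle = -(n-2)\hat\beta\, x_n r^{-n} + O(r^{1-n-2\hat\delta})$. Adding these gives
\[H_{N_\lambda^+} + \langle\nabla\log\hat\rho,\nu\rangle = -\Delta u - \frac{n-2}{2}\big((n-1)\alpha+2\hat\beta\big)\,x_n\, r^{-n} + O(r^{1-n-2\hat\delta}).\]
By (\ref{consequence.of.assumption}), the number $c_0 := -\tfrac{n-2}{2}((n-1)\alpha+2\hat\beta)$ is strictly positive. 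Since $x_n \approx \lambda$, this produces a positive term $\geq \tfrac{c_0}{2}\lambda r^{-n}$. This is the only place the negativity of the mass is used.

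\textbf{Step 3: absorbing the errors.} This bookkeeping is the main obstacle. The good terms add up to at least
\[c\,\big(\hat\delta\,|x'|^2 r^{-n-1-\hat\delta} + \lambda r^{-n}\big).\]
They must beat both the negative part $C\lambda^{1-\hat\delta} r^{-n}$ of $-\Delta u$ and the error $C r^{1-n-2\hat\delta}$.
\begin{itemize}
\item Where $|x'| \leq \lambda$, we have $r \sim \lambda$. Then $r^{1-n-2\hat\delta} \sim \lambda^{-2\hat\delta}\cdot\lambda r^{-n}$, and the negative part is $\lesssim \lambda^{-\hat\delta}\cdot \lambda r^{-n}$. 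Both are dominated by $c_0\lambda r^{-n}$ once $\lambda$ is large.
\item Where $|x'| \geq \lambda$, the term $\hat\delta|x'|^2 r^{-n-1-\hat\delta} \sim \hat\delta r^{1-n-\hat\delta}$ dominates $C r^{1-n-2\hat\delta}$, because $r^{-\hat\delta} \leq \lambda^{-\hat\delta}$ is small.
\end{itemize}
I would also check carefully that the quadratic corrections in $\nabla u$, and the cross terms between $\nabla u$ and $\nabla w$, are $O(r^{1-n-2\hat\delta})$ or smaller. Then everything is absorbed for $\lambda$ sufficiently large, which proves the proposition.
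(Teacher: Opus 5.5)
Your proposal is correct and follows the paper's proof. It uses the same decomposition: the Euclidean mean curvature $(|x'|^2+\lambda^2)^{-\frac{n+1+\hat\delta}{2}}(\hat\delta|x'|^2-(n-1)\lambda^2)$, plus a conformal correction and drift term that combine into $-\frac{1}{2}(n-2)((n-1)\alpha+2\hat\beta)\lambda r^{-n}$, plus an $O(r^{1-n-2\hat\delta})$ error, followed by a two-region absorption argument. The only difference is bookkeeping. The paper splits at $\hat\delta|x'|^2=4(n-1)\lambda^2$, so that outside this set the Euclidean term alone is positive and dominant. Your split at $|x'|=\lambda$ instead needs $(n-1)\lambda^2(|x'|^2+\lambda^2)^{-\frac{n+1+\hat\delta}{2}}\le C\lambda^{-\hat\delta}\,\lambda r^{-n}$ on all of $N_\lambda^+$ (true since $r\ge\lambda/2$), so state explicitly that the mass term also absorbs the negative Euclidean part in your outer region.
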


\textbf{Proof.}
We only prove the assertion for $N_\lambda^+$. The proof for $N_\lambda^-$ is analogous. Let $\bar{H}_{N_\lambda^+}$ denote the mean curvature of the hypersurface $N_\lambda^+$ with respect to the Euclidean metric $\bar{g}$. Then 
\begin{align*} 
&\Big | \bar{H}_{N_\lambda^+} - (x_1^2+\hdots+x_{n-1}^2+\lambda^2)^{-\frac{n+1+\hat{\delta}}{2}} \, \big ( \hat{\delta} \, (x_1^2+\hdots+x_{n-1}^2) - (n-1) \, \lambda^2 \big ) \Big | \\ 
&\leq C \, r^{1-n-2\hat{\delta}}.
\end{align*} 
Moreover, the metric $g$ satisfies 
\[|g - (1+\alpha \, r^{2-n}) \, \bar{g}|_{\bar{g}} \leq C \, r^{2-n-2\hat{\delta}}\] 
and 
\[|\bar{D}(g - (1+\alpha \, r^{2-n}) \, \bar{g})|_{\bar{g}} \leq C \, r^{1-n-2\hat{\delta}}.\] 
Consequently, the mean curvature of $N_\lambda^+$ with respect to the metric $g$ satisfies 
\begin{align*} 
&\Big | H_{N_\lambda^+} + \frac{1}{2} \, (n-2)(n-1) \, \alpha \, \lambda \, r^{-n} \\ 
&- (x_1^2+\hdots+x_{n-1}^2+\lambda^2)^{-\frac{n+1+\hat{\delta}}{2}} \, \big ( \hat{\delta} \, (x_1^2+\hdots+x_{n-1}^2) - (n-1) \, \lambda^2 \big ) \Big | \\ 
&\leq C \, r^{1-n-2\hat{\delta}}.
\end{align*} 
The function $\hat{\rho}$ satisfies 
\[|\hat{\rho} - (1 + \hat{\beta} \, r^{2-n})| \leq C \, r^{2-n-2\hat{\delta}}\] 
and 
\[|\bar{D}(\hat{\rho} - (1 + \hat{\beta} \, r^{2-n}))|_{\bar{g}} \leq C \, r^{1-n-2\hat{\delta}}.\] 
Consequently, the normal derivative of the function $\log \hat{\rho}$ along $N_\lambda^+$ satisfies 
\[|\langle \nabla \log \hat{\rho},\nu_{N_\lambda^+} \rangle + (n-2) \, \hat{\beta} \, \lambda \, r^{-n}| \leq C \, r^{1-n-2\hat{\delta}}.\] 
Putting these facts together, we obtain 
\begin{align*} 
&\Big | H_{N_\lambda^+} + \langle \nabla \log \hat{\rho},\nu_{N_\lambda^+} \rangle + \frac{1}{2} \, (n-2) \, ((n-1)\alpha + 2\hat{\beta}) \, \lambda \, r^{-n} \\ 
&- (x_1^2+\hdots+x_{n-1}^2+\lambda^2)^{-\frac{n+1+\hat{\delta}}{2}} \, \big ( \hat{\delta} \, (x_1^2+\hdots+x_{n-1}^2) - (n-1) \, \lambda^2 \big ) \Big | \\ 
&\leq C \, r^{1-n-2\hat{\delta}}. 
\end{align*}
Recall that $(n-1)\alpha + 2\hat{\beta} < 0$ by (\ref{consequence.of.assumption}). 

It is convenient to divide $N_\lambda^+$ into two regions. In the region 
\[N_\lambda^+ \cap \{\hat{\delta} \, (x_1^2+\hdots+x_{n-1}^2) \geq 4(n-1)\lambda^2\},\] 
we have 
\begin{align*} 
&H_{N_\lambda^+} + \langle \nabla \log \hat{\rho},\nu_{N_\lambda^+} \rangle \\ 
&\geq (x_1^2+\hdots+x_{n-1}^2+\lambda^2)^{-\frac{n+1+\hat{\delta}}{2}} \, \big ( \hat{\delta} \, (x_1^2+\hdots+x_{n-1}^2) - (n-1) \, \lambda^2 \big ) \\ 
&- C \, r^{1-n-2\hat{\delta}} \\ 
&\geq (x_1^2+\hdots+x_{n-1}^2+\lambda^2)^{-\frac{n+1+\hat{\delta}}{2}} \, \Big ( \frac{1}{2} \, \hat{\delta} \, (x_1^2+\hdots+x_{n-1}^2) + (n-1) \, \lambda^2 \Big ) \\ 
&- C \, r^{1-n-2\hat{\delta}}, 
\end{align*} 
and the expression on the right hand side is positive if $\lambda$ is sufficiently large. Finally, in the region 
\[N_\lambda^+ \cap \{\hat{\delta} \, (x_1^2+\hdots+x_{n-1}^2) \leq 4(n-1)\lambda^2\},\] 
we have 
\[H_{N_\lambda^+} + \langle \nabla \log \hat{\rho},\nu_{N_\lambda^+} \rangle \geq -\frac{1}{2} \, (n-2) \, ((n-1)\alpha + 2\hat{\beta}) \, \lambda \, r^{-n} - C \, r^{1-n-\hat{\delta}},\] 
and the expression on the right hand side is positive if $\lambda$ is sufficiently large. This completes the proof of Proposition \ref{weighted.mean.curvature}. \\

Let us fix a large constant $\lambda_0$ with the following properties: 
\begin{itemize}
\item For each $\lambda \geq \lambda_0$, we have 
\[H_{N_\lambda^+} + \langle \nabla \log \hat{\rho},\nu_{N_\lambda^+} \rangle > 0\] 
at each point on $N_\lambda^+$.
\item For each $\lambda \geq \lambda_0$, we have 
\[H_{N_\lambda^-} + \langle \nabla \log \hat{\rho},\nu_{N_\lambda^-} \rangle > 0\] 
at each point on $N_\lambda^-$.
\item The hypersurfaces $\{N_\lambda^+: \lambda \in [\lambda_0,4\lambda_0]\}$ form a foliation.
\item The hypersurfaces $\{N_\lambda^-: \lambda \in [\lambda_0,4\lambda_0]\}$ form a foliation.
\end{itemize}

\begin{proposition}
\label{angle}
There exists a large constant $s_0$ (depending on $\lambda_0$) with the following significance. If $s \geq s_0$ and $\lambda \in [\lambda_0,4\lambda_0]$, then the hypersurface 
\[W_s = \{x_1^2+\hdots+x_{n-1}^2=s^2\}\] 
intersects $N_\lambda^+$ and $N_\lambda^-$ transversally. Moreover, $\langle \nu_{N_\lambda^+},\nu_{W_s} \rangle < 0$ at each point on $N_\lambda^+ \cap W_s$ and $\langle \nu_{N_\lambda^-},\nu_{W_s} \rangle < 0$ at each point on $N_\lambda^- \cap W_s$. Here, $\nu_{W_s}$ denotes the outward-pointing unit normal vector to $W_s$.
\end{proposition}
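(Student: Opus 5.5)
The plan is to compare with the Euclidean picture and treat the metric $g$ as a small perturbation. Write $y = (x_1,\hdots,x_{n-1})$, $|y| = s$, and set $h_\lambda(y) = \lambda - (n-3+\hat{\delta})^{-1} (|y|^2+\lambda^2)^{-\frac{n-3+\hat{\delta}}{2}}$. Then $N_\lambda^+ = \{x_n = h_\lambda(y)\}$. With respect to $\bar{g}$, the upward unit normal is proportional to $(-\nabla h_\lambda, 1)$, where
\[\nabla h_\lambda(y) = (|y|^2+\lambda^2)^{-\frac{n-1+\hat{\delta}}{2}} \, y .\]
The hypersurface $W_s$ has Euclidean outward normal $(y/s, 0)$. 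The Euclidean inner product of the upward normal of $N_\lambda^+$ with $\nu_{W_s}$ is therefore
\[-\frac{|\nabla h_\lambda|}{\sqrt{1+|\nabla h_\lambda|^2}} = -\frac{s\,(s^2+\lambda^2)^{-\frac{n-1+\hat{\delta}}{2}}}{\sqrt{1+|\nabla h_\lambda|^2}} .\]
This is strictly negative. For $\lambda \in [\lambda_0,4\lambda_0]$ and $s \geq s_0 \geq \lambda_0$, it is bounded above by $-c \, s^{2-n-\hat{\delta}}$ for a constant $c>0$ depending only on $n$ and $\hat{\delta}$. The case of $N_\lambda^-$ is the reflection $x_n \mapsto -x_n$, which swaps the orientation convention consistently and gives the same sign.

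The main step, and the only real obstacle, is to pass from $\bar{g}$ to $g$. The Euclidean angle tends to zero like $s^{2-n-\hat{\delta}}$, so the metric error must be shown to be smaller. On $N_\lambda^\pm \cap W_s$ we have $|x_n| \leq 4\lambda_0$, hence $r \sim s$. Moreover $g = (1+\alpha r^{2-n})\bar{g} + O(r^{2-n-2\delta})$. The conformal part $(1+\alpha r^{2-n})\bar{g}$ does not change angles, so it does not affect the sign. The unit $g$-normals differ from suitably rescaled Euclidean normals only through the non-conformal error $e = g - (1+\alpha r^{2-n})\bar{g}$, with $|e| = O(s^{2-n-2\delta})$. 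Write $\nu_N$ as the vector dual via $g$ to the covector $dF/|dF|_g$, where $F = x_n - h_\lambda$, and similarly for $W_s$. Then
\[\langle \nu_N,\nu_W\rangle_g = \frac{g^{-1}(dF, d|y|)}{|dF|_{g}\,|d|y||_{g}} .\]
Here $g^{-1} = (1+\alpha r^{2-n})^{-1}(\bar{g}^{-1} + O(s^{2-n-2\delta}))$. The normalization factors cancel the conformal factor. The upshot is
\[\langle \nu_N,\nu_W\rangle_g = -\frac{|\nabla h_\lambda|}{\sqrt{1+|\nabla h_\lambda|^2}} + O(s^{2-n-2\delta}) \leq -c\,s^{2-n-\hat{\delta}} + C\,s^{2-n-2\delta} .\]
Since $\hat{\delta} < \delta < 2\delta$, the right-hand side is negative once $s \geq s_0$ with $s_0$ large depending on $\lambda_0$.

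Transversality then follows at once, because the normals are not parallel: the inner product is strictly negative and hence not $\pm 1$ for large $s$. One also checks that $W_s$ actually meets $N_\lambda^\pm$ inside $E_0$, which is clear since $|y| = s$ is large. The constants $C$ are uniform in $\lambda \in [\lambda_0,4\lambda_0]$, because the relevant region is $\{|y| = s,\ |x_n| \leq 4\lambda_0\}$, which lies in $E_0$ with $r \geq s$.
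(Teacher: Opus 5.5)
Your proposal is correct and follows essentially the same route as the paper. You compute the Euclidean inner product exactly, $-|\nabla h_\lambda|/\sqrt{1+|\nabla h_\lambda|^2}\approx -s^{2-n-\hat{\delta}}$, then use that the conformal part $(1+\alpha\, r^{2-n})\,\bar{g}$ preserves angles, so the non-conformal error shifts the inner product only by a lower-order term $O(s^{2-n-2\delta})$; your covector formula for $\langle \nu_N,\nu_W\rangle_g$ simply spells out the perturbation step that the paper states in one line.
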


\textbf{Proof.} 
We only prove the assertion for $N_\lambda^+$. The proof for $N_\lambda^-$ is analogous. Let $\bar{\nu}_{N_\lambda^+}$ denote the unit normal to $N_\lambda^+$ with respect to the Euclidean metric $\bar{g}$, and let $\bar{\nu}_{W_s}$ denote the outward-pointing unit normal vector to $W_s$ with respect to the Euclidean metric. We compute 
\[\langle \bar{\nu}_{N_\lambda^+},\bar{\nu}_{W_s} \rangle_{\bar{g}} = -s \, (s^2+\lambda^2)^{-\frac{n-1+\hat{\delta}}{2}} \, \big (1 + s^2 \, (s^2+\lambda^2)^{1-n-\hat{\delta}} \big )^{-\frac{1}{2}}\] 
at each point on $N_\lambda^+ \cap W_s$. Hence, if $s$ is sufficiently large (depending on $\lambda_0$), then 
\[|\langle \bar{\nu}_{N_\lambda^+},\bar{\nu}_{W_s} \rangle_{\bar{g}} + s^{2-n-\hat{\delta}}| \leq C \, s^{-n-\hat{\delta}}\] 
at each point on $N_\lambda^+ \cap W_s$. Since the metric $g$ satisfies 
\[|g - (1+\alpha \, r^{2-n}) \, \bar{g}|_{\bar{g}} \leq C \, r^{2-n-2\hat{\delta}}\] 
and the Riemannian metric $(1+\alpha \, r^{2-n}) \, \bar{g}$ is conformal to $\bar{g}$, we conclude that 
\[|\langle \nu_{N_\lambda^+},\nu_{W_s} \rangle_g - \langle \bar{\nu}_{N_\lambda^+},\bar{\nu}_{W_s} \rangle_{\bar{g}}| \leq C \, s^{2-n-2\hat{\delta}}\] 
at each point on $N_\lambda^+ \cap W_s$. Therefore, if $s$ is sufficiently large (depending on $\lambda_0$), then we obtain 
\[|\langle \nu_{N_\lambda^+},\nu_{W_s} \rangle_g + s^{2-n-\hat{\delta}}| \leq C \, s^{2-n-2\hat{\delta}}\] 
at each point on $N_\lambda^+ \cap W_s$. This completes the proof of Proposition \ref{angle}. \\

\begin{definition}
For each $\lambda \in [\lambda_0,4\lambda_0]$, we define an open domain $E_{\text{\rm slab},\lambda}$ by 
\[E_{\text{\rm slab},\lambda} = E \setminus \Big \{ |x_n| \geq \lambda - (n-3+\hat{\delta})^{-1} \, (x_1^2+\hdots+x_{n-1}^2+\lambda^2)^{-\frac{n-3+\hat{\delta}}{2}} \Big \}.\] 
Note that $\partial E_{\text{\rm slab},\lambda} = N_\lambda^+ \cup N_\lambda^- \cup \partial E$. 
\end{definition}

We define a vector field $X$ on $E_{\text{\rm slab},4\lambda_0} \setminus E_{\text{\rm slab},\lambda_0}$ so that $X$ is the unit normal vector field to the foliation $N_\lambda^+$ in the region 
\begin{align*} 
&\Big \{ \lambda_0 - (n-3+\hat{\delta})^{-1} \, (x_1^2+\hdots+x_{n-1}^2+\lambda_0^2)^{-\frac{n-3+\hat{\delta}}{2}} \\ 
&\hspace{4mm} \leq x_n < 4\lambda_0 - (n-3+\hat{\delta})^{-1} \, (x_1^2+\hdots+x_{n-1}^2+16\lambda_0^2)^{-\frac{n-3+\hat{\delta}}{2}} \Big \} 
\end{align*} 
and $X$ is the unit normal vector field to the foliation $N_\lambda^-$ in the region 
\begin{align*} 
&\Big \{ \lambda_0 - (n-3+\hat{\delta})^{-1} \, (x_1^2+\hdots+x_{n-1}^2+\lambda_0^2)^{-\frac{n-3+\hat{\delta}}{2}} \\ 
&\hspace{4mm} \leq -x_n < 4\lambda_0 - (n-3+\hat{\delta})^{-1} \, (x_1^2+\hdots+x_{n-1}^2+16\lambda_0^2)^{-\frac{n-3+\hat{\delta}}{2}} \Big \}. 
\end{align*} 
Note that $|X|=1$ at each point in $E_{\text{\rm slab},4\lambda_0} \setminus E_{\text{\rm slab},\lambda_0}$. 

\begin{proposition}
\label{X.is.a.calibration}
Let $X$ denote the vector field constructed above. Then $\text{\rm div}_g(\hat{\rho} \, X) > 0$ at each point in $E_{\text{\rm slab},4\lambda_0} \setminus E_{\text{\rm slab},\lambda_0}$. Moreover, if $s \geq s_0$, then $\langle X,\nu_{W_s} \rangle < 0$ at each point in $(E_{\text{\rm slab},4\lambda_0} \setminus E_{\text{\rm slab},\lambda_0}) \cap W_s$.
\end{proposition}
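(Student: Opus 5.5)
The plan is to use the standard formula for the divergence of the unit normal field of a foliation. Since $X$ has unit length, the leaves are level sets, and $X$ is their unit normal, we have $\text{\rm div}_g X = H_{N_\lambda^\pm}$, the mean curvature of the leaf through the point with respect to the chosen normal. Indeed, at a point of $N_\lambda^+$, pick an orthonormal frame $e_1,\hdots,e_{n-1}$ tangent to the leaf together with $X$. Then $\sum_i \langle D_{e_i} X,e_i\rangle = H_{N_\lambda^+}$, and $\langle D_X X,X\rangle = \frac{1}{2}\,X(|X|^2) = 0$.

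It follows that
\[\text{\rm div}_g(\hat{\rho}\,X) = \hat{\rho}\,\big(H_{N_\lambda^\pm} + \langle \nabla \log \hat{\rho},\nu_{N_\lambda^\pm}\rangle\big).\]
Every point of $E_{\text{\rm slab},4\lambda_0} \setminus E_{\text{\rm slab},\lambda_0}$ lies on exactly one leaf $N_\lambda^+$ or $N_\lambda^-$ with $\lambda \in [\lambda_0,4\lambda_0)$, since these families form foliations by the choice of $\lambda_0$. Moreover, $X$ agrees with the chosen normal $\nu_{N_\lambda^\pm}$ on that leaf. The first-listed property of $\lambda_0$ (from Proposition \ref{weighted.mean.curvature}), combined with $\hat{\rho}>0$, then gives strict positivity.

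There is one small point to check. The region may contain parts where the leaves are not entirely inside $E_0$, but the slab region near $|x_n|\approx\lambda$ lies in the asymptotically flat end once $\lambda_0$ is large. The leaves are graphs over all of $\mathbb{R}^{n-1}$ at height about $\lambda\ge\lambda_0$, so they stay outside the unit ball. I would also note that the convention $dx_n(\nu)>0$ on $N^+$ and $dx_n(\nu)<0$ on $N^-$ is consistent with how $X$ was defined.

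For the second claim, at a point of $(E_{\text{\rm slab},4\lambda_0} \setminus E_{\text{\rm slab},\lambda_0}) \cap W_s$ we have $X = \nu_{N_\lambda^\pm}$ for the leaf through that point, with $\lambda\in[\lambda_0,4\lambda_0]$. Proposition \ref{angle} gives $\langle \nu_{N_\lambda^\pm},\nu_{W_s}\rangle<0$ for $s\ge s_0$, and this is exactly the assertion. The only mildly delicate step is justifying the divergence identity, in particular that the $\langle D_X X, X\rangle$ term vanishes. That follows immediately from $|X|=1$, so I expect no real obstacle.
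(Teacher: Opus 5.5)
Your proposal is correct and matches the paper's proof. The first claim follows from Proposition \ref{weighted.mean.curvature} via the foliation identity $\text{\rm div}_g(\hat{\rho}\,X) = \hat{\rho}\,(H_{N_\lambda^\pm} + \langle \nabla \log \hat{\rho},\nu_{N_\lambda^\pm}\rangle)$, where $\langle D_X X,X\rangle = 0$ because $|X|=1$, and the second claim is immediate from Proposition \ref{angle}; the paper states both in one line, and you have only made the divergence identity and the leaf-membership bookkeeping explicit.
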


\textbf{Proof.} 
The first statement follows from Proposition \ref{weighted.mean.curvature}. The second statement follows from Proposition \ref{angle}. \\

\subsection{Construction of a $\mu$-bubble (possibly with singularities)} In this subsection, we construct a suitable $\mu$-bubble. The use of $\mu$-bubbles in the study of scalar curvature was pioneered by Gromov \cite{Gromov}. 

In the following, $\Phi$ will denote the function constructed in Lemma \ref{Phi}. 

\begin{lemma} 
\label{Y.is.a.calibration}
We can find a small positive constant $\eta_0$ so that the following statement holds. The distance function $d_{(M,g)}(\cdot,\partial E)$ is smooth on the tubular neighborhood $E \cap \mathcal{N}_{(M,g)}(\partial E,4\eta_0)$. Moreover, we have $\text{\rm div}(\hat{\rho} \, Y) > \hat{\rho} \, \Phi$ at each point in $E \cap \mathcal{N}_{(M,g)}(\partial E,4\eta_0)$, where $Y$ denotes the gradient of the function $-d_{(M,g)}(\cdot,\partial E)$. 
\end{lemma}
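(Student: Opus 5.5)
The plan is to exploit that $\partial E$ is compact and smooth, and that $\Phi \to -\infty$ on $\partial E$, while all geometric quantities of the level sets of the distance function stay bounded near $\partial E$. If $\partial E = \emptyset$ the statement is vacuous, so assume $\partial E \neq \emptyset$.

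First, since $\partial E$ is a compact smooth hypersurface (recall that $s_1+s_0$ was chosen to be a regular value of $\sigma$, and $E \setminus E_0$ is bounded), it has a positive normal injectivity radius. Hence for $\eta_0$ small the function $d = d_{(M,g)}(\cdot,\partial E)$ is smooth on $E \cap \mathcal{N}_{(M,g)}(\partial E,8\eta_0)$, with $|\nabla d| = 1$ there. The level sets $\{d = t\}$, $t \in (0,8\eta_0)$, are smooth parallel hypersurfaces. On the closed region $\overline{E} \cap \overline{\mathcal{N}(\partial E,4\eta_0)}$, which is compact, the Hessian $D^2 d$ is bounded by some constant $C_1$. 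In particular $|\Delta d| \leq C_1$.

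Next, with $Y = -\nabla d$ we compute
\[\text{div}(\hat{\rho}\, Y) = \hat{\rho}\,\big(-\Delta d - \langle \nabla \log \hat{\rho}, \nabla d\rangle\big) \geq -\hat{\rho}\,\big(C_1 + C_2\big),\]
where $C_2$ bounds $|\nabla \log \hat{\rho}|$ on this compact set. Here we use that $\hat{\rho}$ is a strictly positive smooth function on the closure of $E$; this holds because $\hat{v} > 0$ on $\hat{E} \supset \overline{E}$, and this is exactly why the Dirichlet problem was posed on the enlarged domain. So the left side is bounded below by $-C\hat{\rho}$ with $C$ independent of $\eta_0$, provided $\eta_0$ is below some fixed threshold.

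Finally, because $\Phi \to -\infty$ on $\partial E$ and $\partial E$ is compact, we can shrink $\eta_0$ further so that $\Phi < -C-1$ on $E \cap \mathcal{N}(\partial E,4\eta_0)$. Concretely, $\Phi = \varphi(\sigma)$ with $\sigma \to s_1+s_0$ uniformly as $d \to 0$, since $\sigma$ is continuous up to $\partial E$ and $\sigma = s_1+s_0$ there. Then
\[\text{div}(\hat{\rho} Y) \geq -C\hat{\rho} > \hat{\rho}\,\Phi,\]
which is the claim. The only delicate point, and the main obstacle, is the order of choosing constants. The bound $C$ must be fixed using an initial tubular neighborhood, and only afterwards is $\eta_0$ shrunk to make $\Phi$ sufficiently negative. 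Shrinking $\eta_0$ only improves the bounds on $D^2 d$ and $\nabla\log\hat\rho$, so the argument is consistent. One also has to check that $\hat{\rho}$ and its gradient stay bounded up to $\partial E$, which holds by smoothness on $\hat{E}$.
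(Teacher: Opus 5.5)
Your proposal is correct and takes the same approach as the paper, whose proof consists of the single observation that $\Phi \to -\infty$ on $\partial E$. In your notation, $\hat{\rho}^{-1} \, \text{\rm div}(\hat{\rho} \, Y) = -\Delta d - \langle \nabla \log \hat{\rho}, \nabla d \rangle$ is bounded near $\partial E$, and you supply the details the paper leaves implicit: compactness of $\partial E$, the normal injectivity radius, positivity of $\hat{\rho}$ on $\overline{E} \subset \hat{E}$, uniform divergence of $\Phi$, and the order in which the constants are fixed.
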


\textbf{Proof.} 
By Lemma \ref{Phi}, we know that $\Phi \to -\infty$ on the boundary $\partial E$. From this, the assertion follows. This completes the proof of Lemma \ref{Y.is.a.calibration}. \\

Let us consider a sequence $s_j \to \infty$. For each $\lambda \in [\lambda_0,4\lambda_0]$ and each $j$, we define an open domain $E_{\text{\rm slab},\lambda}^{(j)}$ by 
\[E_{\text{\rm slab},\lambda}^{(j)} = E_{\text{\rm slab},\lambda} \setminus \{x_1^2+\hdots+x_{n-1}^2 \geq s_j^2\}.\] 
For each $j$, we consider a variational problem on the domain $E_{\text{\rm slab},4\lambda_0}^{(j)}$. 

\begin{definition}
Let $\eta_0$ be chosen as in Lemma \ref{Y.is.a.calibration}. We denote by $\mathcal{Z}^{(j)}$ the collection of Caccioppoli sets $\Omega \subset E_{\text{\rm slab},4\lambda_0}^{(j)}$ with the property that 
\[\Omega \cap \mathcal{N}_{(M,g)}(\partial E,\eta_0) = \emptyset,\] 
\[\Omega \cap \bigcup_{\lambda \in [3\lambda_0,4\lambda_0)} N_\lambda^+ = \emptyset,\] 
and 
\[\bigcup_{\lambda \in [3\lambda_0,4\lambda_0)} N_\lambda^- \setminus \{x_1^2+\hdots+x_{n-1}^2 \geq s_j^2\} \subset \Omega.\] 
Note that the domain $E_{\text{\rm slab},4\lambda_0}^{(j)} \cap \{x_n < -10\}$ belongs to $\mathcal{Z}^{(j)}$. In particular, $\mathcal{Z}^{(j)} \neq \emptyset$.
\end{definition}

\begin{definition}
\label{definition.of.functional}
For each $\Omega \in \mathcal{Z}^{(j)}$, we define 
\[\mathcal{F}^{(j)}(\Omega) = \int_{\partial^* \Omega \cap E_{\text{\rm slab},4\lambda_0}^{(j)}} \hat{\rho} \, d\mathcal{H}^{n-1} - \int_\Omega \hat{\rho} \, \Phi \, d\mathcal{H}^n.\] 
Since $\Phi \leq 0$ at each point in $E$, we have 
\[\mathcal{F}^{(j)}(\Omega) \geq \int_{\partial^* \Omega \cap E_{\text{\rm slab},4\lambda_0}^{(j)}} \hat{\rho} \, d\mathcal{H}^{n-1}\] 
for each $\Omega \in \mathcal{Z}^{(j)}$.
\end{definition}

\begin{lemma} 
\label{modification.of.Omega.1}
Let $\eta_0$ be chosen as in Lemma \ref{Y.is.a.calibration}. Let $\Omega \in \mathcal{Z}^{(j)}$, and suppose that $\Omega$ has smooth boundary in $E_{\text{\rm slab},4\lambda_0}^{(j)}$. Let us choose $\tilde{\eta} \in (3\eta_0,4\eta_0)$ so that $\partial \Omega$ is transversal to $\partial \mathcal{N}_{(M,g)}(\partial E,\tilde{\eta})$. Then $\mathcal{F}^{(j)}(\tilde{\Omega}) \leq \mathcal{F}^{(j)}(\Omega)$, where $\tilde{\Omega} = \Omega \setminus \mathcal{N}_{(M,g)}(\partial E,\tilde{\eta})$.
\end{lemma}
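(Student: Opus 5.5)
We compare $\mathcal{F}^{(j)}(\Omega)$ and $\mathcal{F}^{(j)}(\tilde{\Omega})$ by a calibration argument on the region $U = \Omega \cap \mathcal{N}_{(M,g)}(\partial E,\tilde{\eta})$, using the vector field $Y = -\nabla d_{(M,g)}(\cdot,\partial E)$ from Lemma \ref{Y.is.a.calibration}. Since $\tilde{\eta} < 4\eta_0$, $Y$ is smooth with $|Y| = 1$ on a neighborhood of the closure of $U$. Because $\Omega \in \mathcal{Z}^{(j)}$ avoids $\mathcal{N}_{(M,g)}(\partial E,\eta_0)$, the set $U$ stays away from $\partial E$. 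If $\eta_0$ is small and $\partial E$ is bounded, $U$ also stays away from the faces $N_\lambda^\pm$ and $W_{s_j}$ of $E_{\text{\rm slab},4\lambda_0}^{(j)}$. Hence $\tilde{\Omega}$ still belongs to $\mathcal{Z}^{(j)}$: removing points does not violate the first two conditions, and the third concerns only a region near infinity, disjoint from $\mathcal{N}_{(M,g)}(\partial E,\tilde{\eta})$.

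Next we split the boundary of $U$. By transversality, $U$ is a Lipschitz domain, and $\partial U$ consists of two pieces:
\begin{itemize}
\item $A = \partial \Omega \cap \mathcal{N}_{(M,g)}(\partial E,\tilde{\eta})$, a portion of the reduced boundary of $\Omega$, with outward normal $\nu_\Omega$;
\item $B = \Omega \cap \partial \mathcal{N}_{(M,g)}(\partial E,\tilde{\eta})$, a portion of the level set $\{d = \tilde{\eta}\}$, with outward normal (pointing out of $U$) equal to $-\nabla d = Y$.
\end{itemize}
The reduced boundary of $\tilde{\Omega}$ is $(\partial^*\Omega \setminus \bar{A}) \cup B$, up to $\mathcal{H}^{n-1}$-null sets. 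Therefore
\[\mathcal{F}^{(j)}(\Omega) - \mathcal{F}^{(j)}(\tilde{\Omega}) = \int_A \hat{\rho} - \int_B \hat{\rho} - \int_U \hat{\rho}\,\Phi.\]

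We then apply the divergence theorem to $\hat{\rho}\,Y$ on $U$:
\[\int_U \text{\rm div}(\hat{\rho}\,Y) = \int_B \hat{\rho}\,\langle Y,Y\rangle + \int_A \hat{\rho}\,\langle Y,\nu_\Omega\rangle \leq \int_B \hat{\rho} + \int_A \hat{\rho}.\]
This is not yet of the right form, since $A$ appears with the wrong sign. Rechecking the orientation of $B$ fixes this. On $B$ we have $d = \tilde{\eta}$, and $U$ lies on the side $d < \tilde{\eta}$. So the outward normal of $U$ along $B$ is $+\nabla d = -Y$. The divergence theorem then reads
\[\int_U \text{\rm div}(\hat{\rho}\,Y) = -\int_B \hat{\rho} + \int_A \hat{\rho}\,\langle Y,\nu_\Omega\rangle \leq -\int_B \hat{\rho} + \int_A \hat{\rho}.\]
Combining this with $\text{\rm div}(\hat{\rho}\,Y) > \hat{\rho}\,\Phi$ from Lemma \ref{Y.is.a.calibration} gives
\[\int_U \hat{\rho}\,\Phi \leq \int_A \hat{\rho} - \int_B \hat{\rho},\]
which is exactly $\mathcal{F}^{(j)}(\Omega) - \mathcal{F}^{(j)}(\tilde{\Omega}) \geq 0$.

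The main care needed is in the sign and orientation bookkeeping, and in justifying the boundary decomposition. The latter uses the transversality assumption, so that $U$ has Lipschitz boundary and the corner set $A \cap B$ has $\mathcal{H}^{n-1}$-measure zero. It also uses the fact that $U$ does not meet the other boundary portions of $E_{\text{\rm slab},4\lambda_0}^{(j)}$, which would otherwise contribute extra terms. If $\partial E = \emptyset$, then $\tilde{\Omega} = \Omega$ and there is nothing to prove.
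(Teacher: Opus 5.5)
Your argument is correct and is the same calibration argument as the paper's. You integrate $\text{\rm div}(\hat{\rho}\,Y) > \hat{\rho}\,\Phi$ over $\Omega \cap \mathcal{N}_{(M,g)}(\partial E,\tilde{\eta})$ and bound the flux through $\partial\Omega$ by $\int \hat{\rho}$ using $|Y|=1$. You also note that the outward normal along the level set $\{d_{(M,g)}(\cdot,\partial E)=\tilde{\eta}\}$ is $-Y$, so that piece contributes $-\int \hat{\rho}$.

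In a final write-up, delete your first, wrongly oriented divergence computation and keep only the corrected one. Your extra checks that $\tilde{\Omega} \in \mathcal{Z}^{(j)}$ and that the region avoids the other faces of $E_{\text{\rm slab},4\lambda_0}^{(j)}$ are details the paper leaves implicit.
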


\textbf{Proof.} 
Let $Y$ denote the vector field constructed in Lemma \ref{Y.is.a.calibration}. Then $|Y|=1$ and $\text{\rm div}_g(\hat{\rho} \, Y) > \hat{\rho} \, \Phi$ at each point in $E \cap \mathcal{N}_{(M,g)}(\partial E,4\eta_0)$. Integrating this inequality over $\Omega \cap \mathcal{N}_{(M,g)}(\partial E,\tilde{\eta}) \subset E \cap \mathcal{N}_{(M,g)}(\partial E,4\eta_0)$ gives 
\begin{align*} 
&\int_{\partial \Omega \cap \mathcal{N}_{(M,g)}(\partial E,\tilde{\eta})} \hat{\rho} \, d\mathcal{H}^{n-1} - \int_{\partial \Omega \cap \partial \mathcal{N}_{(M,g)}(\partial E,\tilde{\eta})} \hat{\rho} \, d\mathcal{H}^{n-1} \\ 
&\geq \int_{\partial \Omega \cap \mathcal{N}_{(M,g)}(\partial E,\tilde{\eta})} \hat{\rho} \, \langle Y,\nu_{\partial \Omega} \rangle \, d\mathcal{H}^{n-1} - \int_{\partial \Omega \cap \partial \mathcal{N}_{(M,g)}(\partial E,\tilde{\eta})} \hat{\rho} \, d\mathcal{H}^{n-1} \\ 
&= \int_{\Omega \cap \mathcal{N}_{(M,g)}(\partial E,\tilde{\eta})} \text{\rm div}(\hat{\rho} \, Y) \, d\mathcal{H}^n \\ 
&\geq \int_{\Omega \cap \mathcal{N}_{(M,g)}(\partial E,\tilde{\eta})} \hat{\rho} \, \Phi \, d\mathcal{H}^n. 
\end{align*} 
This implies $\mathcal{F}^{(j)}(\tilde{\Omega}) \leq \mathcal{F}^{(j)}(\Omega)$. This completes the proof of Lemma \ref{modification.of.Omega.1}. \\

\begin{lemma} 
\label{modification.of.Omega.2}
Let $\Omega \in \mathcal{Z}^{(j)}$, and suppose that $\Omega$ has smooth boundary in $E_{\text{\rm slab},4\lambda_0}^{(j)}$. Let us choose $\tilde{\lambda} \in (\lambda_0,\frac{3\lambda_0}{2})$ so that $\partial \Omega$ is transversal to $N_{\tilde{\lambda}}^+$. Then $\mathcal{F}^{(j)}(\tilde{\Omega}) \leq \mathcal{F}^{(j)}(\Omega)$, where 
\[\tilde{\Omega} 
= \Omega \setminus \bigcup_{\lambda \in (\tilde{\lambda},4\lambda_0)} N_\lambda^+.\]
\end{lemma}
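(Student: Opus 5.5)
This is a calibration argument that mirrors the proof of Lemma \ref{modification.of.Omega.1}. Here the unit vector field $X$ from Proposition \ref{X.is.a.calibration} replaces $Y$, and the inequality $\text{\rm div}(\hat{\rho}\,X) > 0 \geq \hat{\rho}\,\Phi$ replaces $\text{\rm div}(\hat{\rho}\,Y) > \hat{\rho}\,\Phi$. We have $\Phi = 0$ in $E_0$, and the region swept out by $N_\lambda^+$ for $\lambda \in (\tilde{\lambda},4\lambda_0)$ lies in $E_0$. So removing that region from $\Omega$ leaves the bulk term $\int_\Omega \hat{\rho}\,\Phi$ unchanged. The whole task is therefore to show that the $\hat{\rho}$-weighted boundary area does not increase.

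Let $U$ denote the region $\bigcup_{\lambda \in (\tilde{\lambda},4\lambda_0)} N_\lambda^+$ intersected with $E_{\text{\rm slab},4\lambda_0}^{(j)}$; it sits inside $E_{\text{\rm slab},4\lambda_0} \setminus E_{\text{\rm slab},\lambda_0}$, where $X$ is defined. We integrate $\text{\rm div}(\hat{\rho}\,X) > 0$ over $\Omega \cap U$. The boundary of $\Omega \cap U$ consists of four pieces:
\begin{itemize}
\item[(i)] $\partial \Omega \cap U$, where the outward normal is $\nu_{\partial\Omega}$;
\item[(ii)] $\Omega \cap N_{\tilde{\lambda}}^+$, where the outward normal of $\Omega \cap U$ is $-X$, because $X$ is the unit normal with $dx_n(X)>0$ and $U$ lies on the side of larger $\lambda$;
\item[(iii)] the part lying on the cylinder $W_{s_j}$;
\item[(iv)] possibly a part near $N_{4\lambda_0}^+$.
\end{itemize}
Piece (iv) does not occur, since $\Omega \in \mathcal{Z}^{(j)}$ is disjoint from $N_\lambda^+$ for $\lambda \in [3\lambda_0,4\lambda_0)$. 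On piece (iii), the outward normal is $\nu_{W_{s_j}}$, and $\langle X,\nu_{W_{s_j}}\rangle < 0$ by Proposition \ref{X.is.a.calibration}, provided $s_j \geq s_0$ (we may assume this by discarding finitely many $j$). The divergence theorem then gives
\[
0 < \int_{\partial\Omega \cap U} \hat{\rho}\,\langle X,\nu_{\partial\Omega}\rangle - \int_{\Omega \cap N_{\tilde{\lambda}}^+} \hat{\rho} + \int_{\Omega \cap U \cap W_{s_j}} \hat{\rho}\,\langle X,\nu_{W_{s_j}}\rangle .
\]
The last term is nonpositive. Using $|X|=1$, we conclude
\[
\int_{\Omega \cap N_{\tilde{\lambda}}^+} \hat{\rho} \leq \int_{\partial\Omega \cap U} \hat{\rho} .
\]
This is the needed comparison: $\partial^*\tilde{\Omega}$ inside $E_{\text{\rm slab},4\lambda_0}^{(j)}$ equals $(\partial\Omega \setminus \bar U)$ together with $\Omega \cap N_{\tilde{\lambda}}^+$, up to a null set by transversality. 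The boundary along $W_{s_j}$ is not counted in $\mathcal{F}^{(j)}$, because the perimeter is only taken inside the open domain. Combined with the unchanged bulk term, this yields $\mathcal{F}^{(j)}(\tilde{\Omega}) \leq \mathcal{F}^{(j)}(\Omega)$.

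Two further checks are needed.
\begin{itemize}
\item[1.] $\tilde{\Omega}$ still belongs to $\mathcal{Z}^{(j)}$. Only points on the $N^+$ side were removed, the $N^-$ condition and the condition near $\partial E$ are untouched, and the $N^+$ condition becomes stronger.
\item[2.] $U$ is contained in $E_0$, away from $\partial E$. This holds because $\lambda_0$ is large.
\end{itemize}

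The main obstacle I expect is the careful bookkeeping of boundary pieces in the divergence theorem. In particular, one must pin down the sign of the normal on $N_{\tilde{\lambda}}^+$ and treat the corner along $W_{s_j}$ correctly; this corner is exactly where Proposition \ref{angle} enters. For rigor, I would apply the divergence theorem to the Lipschitz domain $\Omega \cap U$, which is legitimate because of the transversality of $\partial\Omega$ to $N_{\tilde{\lambda}}^+$. If $\partial\Omega$ meets $W_{s_j}$ non-transversally, I would first perturb $s_j$ slightly or approximate.
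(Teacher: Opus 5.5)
Your proposal is correct and is essentially the paper's own proof: integrate $\text{\rm div}(\hat{\rho}\,X)>0$ over $\Omega\cap\bigcup_{\lambda\in(\tilde{\lambda},4\lambda_0)}N_\lambda^+$, drop the nonpositive flux through $W_{s_j}$ using $\langle X,\nu_{W_{s_j}}\rangle<0$, and use $\Phi=0$ on $E_0$ so that only the perimeter term changes. One small correction to your last remark: you cannot perturb $s_j$, since it is fixed in the definition of $\mathcal{Z}^{(j)}$, but you also do not need transversality there, because $\Omega$ lies inside the cylinder and so the Gauss--Green formula for the finite-perimeter set $\Omega\cap U$ forces its outer normal on the portion lying in $W_{s_j}$ to be $\nu_{W_{s_j}}$.
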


\textbf{Proof.} 
Let $X$ denote the vector field constructed in the previous subsection. Then $|X| = 1$, $\langle X,\nu_{W_{s_j}} \rangle < 0$ on $(E_{\text{\rm slab},4\lambda_0} \setminus E_{\text{\rm slab},\lambda_0}) \cap W_{s_j}$, and $\text{\rm div}_g(\hat{\rho} \, X) > 0$ in $E_{\text{\rm slab},4\lambda_0} \setminus E_{\text{\rm slab},\lambda_0}$. We now integrate the inequality $\text{\rm div}_g(\hat{\rho} \, X) > 0$ over 
\[\Omega \cap \bigcup_{\lambda \in (\tilde{\lambda},4\lambda_0)} N_\lambda^+.\] Consequently, the $\hat{\rho}$-weighted area of 
\[\partial \Omega \cap \bigcup_{\lambda \in (\tilde{\lambda},4\lambda_0)} N_\lambda^+\] 
is bounded from below by the $\hat{\rho}$-weighted area of $\Omega \cap N_{\tilde{\lambda}}^+$. Since $\Phi=0$ on $E_0$, it follows that $\mathcal{F}^{(j)}(\tilde{\Omega}) \leq \mathcal{F}^{(j)}(\Omega)$. This completes the proof of Lemma \ref{modification.of.Omega.2}. \\

\begin{lemma} 
\label{modification.of.Omega.3}
Let $\Omega \in \mathcal{Z}^{(j)}$, and suppose that $\Omega$ has smooth boundary in $E_{\text{\rm slab},4\lambda_0}^{(j)}$. Let us choose $\tilde{\lambda} \in (\lambda_0,\frac{3\lambda_0}{2})$ so that $\partial \Omega$ is transversal to $N_{\tilde{\lambda}}^-$. Then $\mathcal{F}^{(j)}(\tilde{\Omega}) \leq \mathcal{F}^{(j)}(\Omega)$, where 
\[\tilde{\Omega} = \Omega \cup \bigg ( \bigcup_{\lambda \in (\tilde{\lambda},4\lambda_0)} N_\lambda^- \setminus \{x_1^2+\hdots+x_{n-1}^2 \geq s_j^2\} \bigg ).\]
\end{lemma}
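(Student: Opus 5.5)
This is the mirror image of Lemma \ref{modification.of.Omega.2}, and I will argue it by the same calibration method, now using the foliation $\{N_\lambda^-\}$ and adding the region instead of removing it. Write $U = \bigcup_{\lambda \in (\tilde{\lambda},4\lambda_0)} N_\lambda^- \setminus \{x_1^2+\hdots+x_{n-1}^2 \geq s_j^2\}$, so $\tilde{\Omega} = \Omega \cup U$. Since $U \subset E_0$, where $\Phi = 0$, the bulk terms of $\mathcal{F}^{(j)}(\tilde{\Omega})$ and $\mathcal{F}^{(j)}(\Omega)$ agree. It therefore suffices to compare the $\hat{\rho}$-weighted boundary areas inside $E_{\text{\rm slab},4\lambda_0}^{(j)}$. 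The set $\tilde{\Omega}$ still belongs to $\mathcal{Z}^{(j)}$, because enlarging $\Omega$ inside $E_0$ preserves the required containment and does not meet $\mathcal{N}(\partial E,\eta_0)$ or the $N^+_\lambda$ region.

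Passing from $\Omega$ to $\tilde{\Omega}$ changes the boundary as follows:
\begin{itemize}
\item the part $\partial \Omega \cap U$ is removed;
\item the piece $N_{\tilde{\lambda}}^- \setminus \overline{\Omega}$ is added, since transversality makes this well defined up to measure zero.
\end{itemize}
Nothing else changes inside the open domain $E_{\text{\rm slab},4\lambda_0}^{(j)}$. The remaining boundary of $U$ lies on $N_{4\lambda_0}^-$ and on $W_{s_j}$, which are not counted in $\mathcal{F}^{(j)}$. Moreover, $\Omega$ already contains the $N_\lambda^-$ region for $\lambda \in [3\lambda_0,4\lambda_0)$, so no boundary appears near $N^-_{4\lambda_0}$.

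I would then integrate $\text{\rm div}_g(\hat{\rho} X) > 0$, from Proposition \ref{X.is.a.calibration}, over $U \setminus \Omega$. Here $X$ is the unit normal to $N_\lambda^-$ pointing toward increasing $\lambda$, that is $dx_n(X)<0$, which is outward from the slab. The boundary of $U \setminus \Omega$ consists of four parts:
\begin{itemize}
\item $N_{\tilde{\lambda}}^- \setminus \Omega$, where the outward normal is $-X$, contributing $-\int \hat{\rho}$;
\item $\partial \Omega \cap U$, contributing $\int \hat{\rho}\langle X,\nu\rangle \le \int_{\partial\Omega \cap U} \hat{\rho}$;
\item $W_{s_j}\cap (U\setminus\Omega)$, where the outward normal is $\nu_{W_{s_j}}$ and $\langle X,\nu_{W_{s_j}}\rangle<0$, contributing a nonpositive term;
\item a possible portion of $N_{4\lambda_0}^-$, which is empty because $\Omega \supset U$ near there.
\end{itemize}
Combining these gives
\[0 < \int_{U\setminus\Omega}\text{\rm div}(\hat{\rho}X) \le \int_{\partial\Omega\cap U}\hat{\rho} - \int_{N^-_{\tilde{\lambda}}\setminus\Omega}\hat{\rho},\]
so the added area is at most the removed area, and hence $\mathcal{F}^{(j)}(\tilde{\Omega}) \le \mathcal{F}^{(j)}(\Omega)$.

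The main point to check is the orientation bookkeeping. One must confirm that $X$ points toward larger $\lambda$ on the $N^-$ side, so that the $W_{s_j}$ term has a favorable sign by Proposition \ref{angle}. One must also confirm that the lateral piece of $\partial(U\setminus\Omega)$ lies only on $W_{s_j}$ and not on $\partial E$. The latter holds because $U \subset E_0$ lies at bounded $|x_n|$ far from the compact set $E \setminus E_0$, possibly after enlarging $\lambda_0$.
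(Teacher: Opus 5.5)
Your proposal is correct and follows the paper's argument: integrate $\text{\rm div}_g(\hat{\rho}\,X)>0$ over $U\setminus\Omega$, bound the $\partial\Omega\cap U$ contribution using $|X|=1$, discard the lateral piece on $W_{s_j}$ using $\langle X,\nu_{W_{s_j}}\rangle<0$, and use $\Phi=0$ on $E_0$ so the bulk terms cancel. Your orientation check that $X$ points toward increasing $\lambda$ (so $dx_n(X)<0$), and your verification that $\tilde{\Omega}\in\mathcal{Z}^{(j)}$, supply details that the paper leaves implicit.
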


\textbf{Proof.} 
Let $X$ denote the vector field constructed in the previous subsection. Then $|X| = 1$, $\langle X,\nu_{W_{s_j}} \rangle < 0$ on $(E_{\text{\rm slab},4\lambda_0} \setminus E_{\text{\rm slab},\lambda_0}) \cap W_{s_j}$, and $\text{\rm div}_g(\hat{\rho} \, X) > 0$ in $E_{\text{\rm slab},4\lambda_0} \setminus E_{\text{\rm slab},\lambda_0}$. We now integrate the inequality $\text{\rm div}_g(\hat{\rho} \, X) > 0$ over 
\[\bigcup_{\lambda \in (\tilde{\lambda},4\lambda_0)} N_\lambda^- \setminus (\Omega \cup \{x_1^2+\hdots+x_{n-1}^2 \geq s_j^2\}).\] 
Consequently, the $\hat{\rho}$-weighted area of 
\[\partial \Omega \cap \bigcup_{\lambda \in (\tilde{\lambda},4\lambda_0)} N_\lambda^-\] 
is bounded from below by the $\hat{\rho}$-weighted area of $N_{\tilde{\lambda}}^- \setminus \Omega$. Since $\Phi=0$ on $E_0$, it follows that $\mathcal{F}^{(j)}(\tilde{\Omega}) \leq \mathcal{F}^{(j)}(\Omega)$. This completes the proof of Lemma \ref{modification.of.Omega.3}. \\

\begin{proposition}[Existence of a minimizer]
\label{existence.of.minimizer}
For each $j$, there exists a Caccioppoli set $\hat{\Omega}^{(j)} \in \mathcal{Z}^{(j)}$ which minimizes the functional $\mathcal{F}^{(j)}$. Moreover, 
\begin{equation} 
\label{a}
\hat{\Omega}^{(j)} \cap \mathcal{N}_{(M,g)}(\partial E,2\eta_0) = \emptyset, 
\end{equation}
\begin{equation} 
\label{b}
\hat{\Omega}^{(j)} \cap \bigcup_{\lambda \in [2\lambda_0,4\lambda_0)} N_\lambda^+ = \emptyset, 
\end{equation}
and 
\begin{equation} 
\label{c}
\bigcup_{\lambda \in [2\lambda_0,4\lambda_0)} N_\lambda^- \setminus \{x_1^2+\hdots+x_{n-1}^2 \geq s_j^2\} \subset \hat{\Omega}^{(j)}. 
\end{equation}
In particular, the reduced boundary of $\hat{\Omega}^{(j)}$ is contained in the closure of $E_{\text{\rm slab},2\lambda_0}^{(j)} \setminus \mathcal{N}_{(M,g)}(\partial E,2\eta_0)$.
\end{proposition}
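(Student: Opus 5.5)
We will use the direct method in the class $\mathcal{Z}^{(j)}$, together with the three modification lemmas. These show that any competitor can be pushed away from the three "barrier" regions without increasing $\mathcal{F}^{(j)}$.

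\textbf{Existence.} Let $I = \inf_{\mathcal{Z}^{(j)}} \mathcal{F}^{(j)}$. This infimum is finite: it is nonnegative by Definition \ref{definition.of.functional}, and it is finite because $E_{\text{\rm slab},4\lambda_0}^{(j)} \cap \{x_n<-10\}$ is an admissible competitor. The domain $E_{\text{\rm slab},4\lambda_0}^{(j)}$ is bounded, since $s_j<\infty$ and $E \setminus E_0$ is bounded. On the region where members of $\mathcal{Z}^{(j)}$ live, which stays at distance $\ge \eta_0$ from $\partial E$, the function $\Phi$ is bounded and $\hat{\rho}$ is bounded above and below by positive constants. Consequently, along a minimizing sequence the perimeters in $E_{\text{\rm slab},4\lambda_0}^{(j)}$ are uniformly bounded. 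BV compactness then gives an $L^1_{\rm loc}$-convergent subsequence. The volume term is continuous under $L^1$ convergence, because $\hat{\rho}\Phi$ is bounded on the relevant set, and the weighted perimeter is lower semicontinuous. The three constraints defining $\mathcal{Z}^{(j)}$ are closed under $L^1$ convergence, since they are set inclusions holding a.e. Hence the limit is a minimizer.

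\textbf{Getting the strengthened constraints (\ref{a})--(\ref{c}).} We cannot apply Lemmas \ref{modification.of.Omega.1}--\ref{modification.of.Omega.3} directly to a Caccioppoli set, because they assume a smooth boundary. We therefore do not modify the limit. Instead we modify the minimizing sequence, and we take the sequence to consist of sets with smooth boundary in $E_{\text{\rm slab},4\lambda_0}^{(j)}$. Any element of $\mathcal{Z}^{(j)}$ can be approximated in this way, with $\mathcal{F}^{(j)}$ converging, by mollifying the characteristic function and taking a generic superlevel set. Since mollification may violate the constraints, we carry this out in the interior and then re-impose the constraints, which hold with some room. For each smooth $\Omega_k$ we do three things:
\begin{itemize}
\item pick a transversal $\tilde\eta\in(3\eta_0,4\eta_0)$ by Sard, and apply Lemma \ref{modification.of.Omega.1};
\item pick a transversal $\tilde\lambda\in(\lambda_0,\tfrac{3}{2}\lambda_0)$ for $N^+$, and apply Lemma \ref{modification.of.Omega.2};
\item do the same for $N^-$ with Lemma \ref{modification.of.Omega.3}.
\end{itemize}
Each step does not increase $\mathcal{F}^{(j)}$. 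We must check that each operation preserves what the previous ones achieved. The removal and addition regions are disjoint: the first lies near $\partial E$, and the other two lie in $E_0$ at heights $|x_n|\approx\lambda\ge\lambda_0$, away from $\partial E$ once $\eta_0$ is small. So the order does not matter.

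The resulting sets $\tilde\Omega_k$ form a minimizing sequence with the following properties:
\begin{itemize}
\item $\tilde\Omega_k\cap\mathcal{N}(\partial E,3\eta_0)=\emptyset$;
\item $\tilde\Omega_k$ misses $\bigcup_{\lambda\in(3\lambda_0/2,4\lambda_0)}N^+_\lambda$;
\item $\tilde\Omega_k$ contains the corresponding $N^-$ region.
\end{itemize}
These constraints hold with margin, so they pass to the $L^1$ limit, which gives (\ref{a})--(\ref{c}). The reduced boundary lies in the closure of the complement of these regions inside $E_{\text{\rm slab},4\lambda_0}^{(j)}$, namely in $\overline{E^{(j)}_{\text{\rm slab},2\lambda_0}\setminus\mathcal{N}(\partial E,2\eta_0)}$.

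\textbf{Main obstacle.} The delicate step is the approximation by smooth competitors in $\mathcal{Z}^{(j)}$ with convergent $\mathcal{F}^{(j)}$. Two issues must be handled:
\begin{itemize}
\item Near the slice $W_{s_j}$, the perimeter is counted only inside the domain, so smoothing must not create boundary there.
\item The constraint sets must be respected.
\end{itemize}
Our plan is as follows. Before smoothing, shrink the constraints slightly, using $N^\pm_\lambda$ for $\lambda$ in $[3.5\lambda_0,4\lambda_0)$ and distance $\eta_0$. Smooth only in the region where the constraints are not active. Then restore the original constraints by the exact inclusions, which are harmless because the strengthened versions already hold.
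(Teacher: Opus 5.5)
Your proposal is correct and follows essentially the same route as the paper. You take a minimizing sequence of sets with smooth boundary in $E_{\text{\rm slab},4\lambda_0}^{(j)}$, apply Lemmas \ref{modification.of.Omega.1}--\ref{modification.of.Omega.3} to get the strengthened constraints at $3\eta_0$ and $\frac{3\lambda_0}{2}$, and then use BV compactness and lower semicontinuity so that (\ref{a})--(\ref{c}) pass to the limit. The only difference is that you spell out the smooth approximation inside $\mathcal{Z}^{(j)}$ (respecting the constraints and the cut at $W_{s_j}$), which the paper handles by citing Theorem 13.8 in \cite{Maggi}.
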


\textbf{Proof.} 
We fix an integer $j$. Let $\{\Omega^{(j,l)}: l=1,2,\hdots\} \subset \mathcal{Z}^{(j)}$ be a minimizing sequence for the function $\mathcal{F}^{(j)}$. By Theorem 13.8 in \cite{Maggi}, we may assume that $\Omega^{(j,l)}$ has smooth boundary in $E_{\text{\rm slab},4\lambda_0}^{(j)}$. In view of Lemma \ref{modification.of.Omega.1}, Lemma \ref{modification.of.Omega.2}, and Lemma \ref{modification.of.Omega.3}, we may further assume that 
\begin{equation} 
\label{d}
\Omega^{(j,l)} \cap \mathcal{N}_{(M,g)}(\partial E,3\eta_0) = \emptyset, 
\end{equation}
\begin{equation} 
\label{e}
\Omega^{(j,l)} \cap \bigcup_{\lambda \in [\frac{3\lambda_0}{2},4\lambda_0)} N_\lambda^+ = \emptyset, 
\end{equation}
and 
\begin{equation} 
\label{f}
\bigcup_{\lambda \in [\frac{3\lambda_0}{2},4\lambda_0)} N_\lambda^- \setminus \{x_1^2+\hdots+x_{n-1}^2 \geq s_j^2\} \subset \Omega^{(j,l)}. 
\end{equation}
Clearly, 
\[\sup_l \mathcal{H}^{n-1}(\partial^* \Omega^{(j,l)} \cap E_{\text{\rm slab},4\lambda_0}^{(j)}) < \infty.\] 
Thus, for each $j$, the sets $\{\Omega^{(j,l)}: l=1,2,\hdots\}$ have bounded perimeter. We now invoke the compactness theorem for BV functions. Hence, we can find a Caccioppoli set $\hat{\Omega}^{(j)}$ such that, after passing to a subsequence if necessary, $1_{\Omega^{(j,l)}}$ converges to $1_{\hat{\Omega}^{(j)}}$ strongly in $L^1$. The statements (\ref{a}), (\ref{b}), (\ref{c}) follow from (\ref{d}), (\ref{e}), (\ref{f}). Therefore, $\hat{\Omega}^{(j)} \in \mathcal{Z}^{(j)}$. Since the BV norm is lower semicontinuous with respect to $L^1$ convergence, it follows that 
\[\mathcal{F}^{(j)}(\hat{\Omega}^{(j)}) \leq \limsup_{l \to \infty} \mathcal{F}^{(j)}(\Omega^{(j,l)}).\] 
Thus, $\hat{\Omega}^{(j)}$ is the desired minimizer. This completes the proof of Proposition \ref{existence.of.minimizer}. \\

\begin{lemma}
\label{area.bound.for.hat.Omega_j}
Let $U$ be an open domain with smooth boundary which is contained in a compact subset of $E_{\text{\rm slab},3\lambda_0} \setminus \mathcal{N}_{(M,g)}(\partial E,\frac{3\eta_0}{2})$. Then 
\[\int_{\partial^* \hat{\Omega}^{(j)} \cap U} \hat{\rho} \, d\mathcal{H}^{n-1} \leq \int_{\partial U} \hat{\rho} \, d\mathcal{H}^{n-1}.\] 
\end{lemma}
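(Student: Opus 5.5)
We compare $\hat{\Omega}^{(j)}$ with the two competitors $\hat{\Omega}^{(j)} \cup U$ and $\hat{\Omega}^{(j)} \setminus U$, and average the two minimality inequalities. Since $U$ lies in a compact subset of $E_{\text{\rm slab},3\lambda_0} \setminus \mathcal{N}_{(M,g)}(\partial E,\frac{3\eta_0}{2})$, both competitors still belong to $\mathcal{Z}^{(j)}$, provided $j$ is large enough that $U \subset \{x_1^2+\hdots+x_{n-1}^2 < s_j^2\}$, i.e. $U \subset E_{\text{\rm slab},4\lambda_0}^{(j)}$. Indeed, $U$ avoids $\mathcal{N}_{(M,g)}(\partial E,\eta_0)$ and avoids the regions $\bigcup_{\lambda \in [3\lambda_0,4\lambda_0)} N_\lambda^\pm$, so neither adding nor removing $U$ violates the three constraints in the definition of $\mathcal{Z}^{(j)}$.

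\textbf{Perimeter inequalities.} We use the standard inequalities for Caccioppoli sets, applied to the weighted perimeter $P_{\hat\rho}(\cdot) = \int_{\partial^* \cdot \,\cap E^{(j)}_{\text{\rm slab},4\lambda_0}} \hat\rho \, d\mathcal{H}^{n-1}$. For a.e. admissible $U$, namely when $\mathcal{H}^{n-1}(\partial^*\hat{\Omega}^{(j)} \cap \partial U)=0$, one may also perturb $U$ slightly, and we have
\[P_{\hat\rho}(\hat{\Omega}^{(j)} \cup U) + P_{\hat\rho}(\hat{\Omega}^{(j)} \setminus U) \leq P_{\hat\rho}(\hat{\Omega}^{(j)}) - 2\int_{\partial^*\hat{\Omega}^{(j)} \cap U} \hat\rho + 2\int_{\partial U} \hat\rho .\]
This follows by splitting: the reduced boundary of $\hat{\Omega}^{(j)}$ outside $\overline{U}$ appears in both competitors, the part inside $U$ appears in neither, and the new boundary pieces $\partial U \setminus \hat{\Omega}^{(j)}$ and $\partial U \cap \hat{\Omega}^{(j)}$ together have weight at most $\int_{\partial U}\hat\rho$. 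Doing this sum once for each competitor gives the displayed estimate.

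\textbf{Volume terms.} The volume integrals are additive:
\[\int_{\hat{\Omega}^{(j)} \cup U}\hat\rho\Phi + \int_{\hat{\Omega}^{(j)} \setminus U}\hat\rho\Phi = 2\int_{\hat{\Omega}^{(j)}}\hat\rho\Phi .\]
Therefore
\[\mathcal{F}^{(j)}(\hat{\Omega}^{(j)}\cup U)+\mathcal{F}^{(j)}(\hat{\Omega}^{(j)}\setminus U) \leq 2\mathcal{F}^{(j)}(\hat{\Omega}^{(j)}) - 2\int_{\partial^*\hat{\Omega}^{(j)}\cap U}\hat\rho + 2\int_{\partial U}\hat\rho .\]
Minimality bounds the left side below by $2\mathcal{F}^{(j)}(\hat{\Omega}^{(j)})$, which yields the claim.

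\textbf{Expected obstacle.} The main subtlety is the case where $\partial U$ carries a positive $\mathcal{H}^{n-1}$-measure portion of $\partial^*\hat{\Omega}^{(j)}$. I would first try to handle it by approximating $U$ from inside by sublevel domains $U_t$ that are transversal in the measure-theoretic sense, which holds for a.e. $t$ by coarea. I would then pass to the limit, using lower semicontinuity on the left side and continuity of $\int_{\partial U_t}\hat\rho$ on the right side.
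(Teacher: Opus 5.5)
There is a genuine gap. Your volume identity is false, and it is exactly the step that makes the averaging close. Pointwise one has
\[1_{\hat{\Omega}^{(j)} \cup U} + 1_{\hat{\Omega}^{(j)} \setminus U} = 2 \cdot 1_{\hat{\Omega}^{(j)}} + 1_{U \setminus \hat{\Omega}^{(j)}} - 1_{U \cap \hat{\Omega}^{(j)}},\]
not $2 \cdot 1_{\hat{\Omega}^{(j)}}$. For example, if $U \cap \hat{\Omega}^{(j)} = \emptyset$, the two volume integrals add up to $2\int_{\hat{\Omega}^{(j)}} \hat{\rho}\,\Phi + \int_U \hat{\rho}\,\Phi$.

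Redo your computation with the correct identity. Also put $2P_{\hat\rho}(\hat{\Omega}^{(j)})$ on the right of your first perimeter display, which is the coefficient you actually use afterwards. You then obtain only
\[\int_{\partial^* \hat{\Omega}^{(j)} \cap U} \hat{\rho} \leq \int_{\partial U} \hat{\rho} + \frac{1}{2} \int_{U \setminus \hat{\Omega}^{(j)}} \hat{\rho} \, |\Phi| - \frac{1}{2} \int_{U \cap \hat{\Omega}^{(j)}} \hat{\rho} \, |\Phi|.\]
The middle term is uncontrolled wherever $\Phi < 0$ on $U$. This can happen, because $U$ is only required to avoid $\mathcal{N}_{(M,g)}(\partial E,\frac{3\eta_0}{2})$, not to lie in $E_0$.

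The culprit is the union competitor. Since $\Phi \leq 0$, adding $U \setminus \hat{\Omega}^{(j)}$ increases $-\int_\Omega \hat{\rho}\,\Phi$, so comparing with $\hat{\Omega}^{(j)} \cup U$ carries a volume penalty of the wrong sign. A warning sign is that your argument never uses $\Phi \leq 0$. For a sign-changing $\Phi$, no bound by $\int_{\partial U}\hat\rho$ alone can be expected.

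The repair, which is the paper's proof, is to discard the union and compare only with $\hat{\Omega}^{(j)} \setminus U$. This set lies in $\mathcal{Z}^{(j)}$: removing $U$ cannot violate the constraints, since $U \subset E_{\text{\rm slab},3\lambda_0}$ is disjoint from $\bigcup_{\lambda \in [3\lambda_0,4\lambda_0)} N_\lambda^-$. Minimality then gives
\[P_{\hat\rho}(\hat{\Omega}^{(j)}) \leq P_{\hat\rho}(\hat{\Omega}^{(j)} \setminus U) + \int_{\hat{\Omega}^{(j)} \cap U} \hat{\rho}\,\Phi \leq P_{\hat\rho}(\hat{\Omega}^{(j)} \setminus U),\]
where the sign of $\Phi$ is used in the second step.

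To finish, use the inclusion $\partial^*(\hat{\Omega}^{(j)} \setminus U) \subset (\partial^* \hat{\Omega}^{(j)} \setminus \overline{U}) \cup \partial U$, valid for every open $U$. It holds because the reduced boundary is local and $\hat{\Omega}^{(j)} \setminus U$ has density $0$ at points of $U$. Hence the right-hand side is at most $P_{\hat\rho}(\hat{\Omega}^{(j)}) - \int_{\partial^* \hat{\Omega}^{(j)} \cap U} \hat{\rho} + \int_{\partial U} \hat{\rho}$, which is the claim. In particular, the transversality issue you anticipated never arises, and no approximation of $U$ by sublevel domains is needed.
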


\textbf{Proof.} 
Since $U$ is contained in a compact subset of $E_{\text{\rm slab},3\lambda_0} \setminus \mathcal{N}_{(M,g)}(\partial E,\frac{3\eta_0}{2})$, it follows that $\hat{\Omega}^{(j)} \setminus U \in \mathcal{Z}^{(j)}$. Using the minimization property of $\mathcal{F}^{(j)}$, we obtain 
\[\mathcal{F}^{(j)}(\hat{\Omega}^{(j)}) \leq \mathcal{F}^{(j)}(\hat{\Omega}^{(j)} \setminus U).\] 
This implies 
\[\int_{\partial^* \hat{\Omega}^{(j)} \cap E_{\text{\rm slab},4\lambda_0}^{(j)} \cap U} \hat{\rho} \, d\mathcal{H}^{n-1} - \int_{\hat{\Omega}^{(j)} \cap U} \hat{\rho} \, \Phi \, d\mathcal{H}^n \leq \int_{\partial U} \hat{\rho} \, d\mathcal{H}^{n-1}.\] 
Since $\Phi \leq 0$, the assertion follows. This completes the proof of Lemma \ref{area.bound.for.hat.Omega_j}. \\

\begin{lemma}
\label{area.growth.of.hat.Omega_j}
We can find a large number $s_*$ with the following significance. Suppose that $j$ is sufficiently large so that $s_j \geq s_*$. Then 
\[\int_{\partial^* \hat{\Omega}^{(j)} \cap E_{\text{\rm slab},4\lambda_0}^{(j)} \cap \{\frac{1}{4} \, \bar{s}^2 < x_1^2+\hdots+x_{n-1}^2 < \bar{s}^2\}} \hat{\rho} \, d\mathcal{H}^{n-1} \leq C \, \bar{s}^{n-1}\]
for each $\bar{s} \in [s_*,s_j]$, where $C$ is a uniform constant. 
\end{lemma}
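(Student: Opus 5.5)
The plan is to run the comparison argument of Lemma \ref{area.bound.for.hat.Omega_j} with a competitor adapted to the annular region. Unlike Lemma \ref{area.bound.for.hat.Omega_j}, the cut-out region will not be compactly contained in $E_{\text{\rm slab},3\lambda_0}$, because it may reach the lateral boundary $W_{s_j}$. The functional $\mathcal{F}^{(j)}$ only counts perimeter inside $E_{\text{\rm slab},4\lambda_0}^{(j)}$, so this should be harmless.

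First fix $s_*$ large, depending on $\lambda_0$, so that three things hold:
\begin{itemize}
\item the set $\{x_1^2+\hdots+x_{n-1}^2 > \frac{1}{16} s_*^2\} \cap E_{\text{\rm slab},4\lambda_0}$ lies in the asymptotically flat end $E_0$, where $\Phi = 0$ and where the Euclidean coordinates are valid;
\item this set is far from $\partial E$ (recall $E \setminus E_0$ is bounded);
\item on this set the hypersurfaces $N_\lambda^\pm$, $\lambda \in [\lambda_0,4\lambda_0]$, are graphs over $\{x_n=0\}$ with gradient at most $1$, and $\frac{1}{2} \leq \hat{\rho} \leq 2$.
\end{itemize}
Given $\bar{s} \in [s_*,s_j]$, define
\[U = E_{\text{\rm slab},5\lambda_0/2}^{(j)} \cap \{x_1^2+\hdots+x_{n-1}^2 > \tfrac{1}{16}\bar{s}^2\} \cap \{x_1^2+\hdots+x_{n-1}^2 < 4\bar{s}^2\},\]
a Lipschitz domain, and take the competitor $\hat{\Omega}^{(j)} \setminus U$.

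Next I will check that $\hat{\Omega}^{(j)} \setminus U \in \mathcal{Z}^{(j)}$. The set $U$ avoids $\mathcal{N}_{(M,g)}(\partial E,\eta_0)$. It is also disjoint from $N_\lambda^\pm$ for every $\lambda \in [3\lambda_0,4\lambda_0)$, since points of $E_{\text{\rm slab},5\lambda_0/2}$ lie strictly inside the slab bounded by $N_{5\lambda_0/2}^\pm$. Removing $U$ therefore preserves all three defining constraints.

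Minimality gives $\mathcal{F}^{(j)}(\hat{\Omega}^{(j)}) \leq \mathcal{F}^{(j)}(\hat{\Omega}^{(j)} \setminus U)$. The reduced boundary of $\hat{\Omega}^{(j)} \setminus U$ inside $E_{\text{\rm slab},4\lambda_0}^{(j)}$ is contained, up to an $\mathcal{H}^{n-1}$-null set, in $(\partial^*\hat{\Omega}^{(j)} \setminus \bar{U}) \cup (\partial U \cap E_{\text{\rm slab},4\lambda_0}^{(j)})$; this is standard for intersections of Caccioppoli sets with Lipschitz domains. Since $\Phi = 0$ on $U$, the bulk terms cancel. We are left with
\[\int_{\partial^*\hat{\Omega}^{(j)} \cap U} \hat{\rho} \, d\mathcal{H}^{n-1} \leq \int_{\partial U \cap E_{\text{\rm slab},4\lambda_0}^{(j)}} \hat{\rho} \, d\mathcal{H}^{n-1}.\]
By Proposition \ref{existence.of.minimizer}, $\partial^*\hat{\Omega}^{(j)}$ lies in the closure of $E_{\text{\rm slab},2\lambda_0}^{(j)}$. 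Hence, over the annulus $\{\frac{1}{4}\bar{s}^2 < x_1^2+\hdots+x_{n-1}^2 < \bar{s}^2\}$, the set $\partial^*\hat{\Omega}^{(j)} \cap E_{\text{\rm slab},4\lambda_0}^{(j)}$ is contained in $U$. So the left-hand side dominates the quantity we want to bound.

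Finally, estimate $\partial U \cap E_{\text{\rm slab},4\lambda_0}^{(j)}$, which has the following pieces:
\begin{itemize}
\item the graphs $N_{5\lambda_0/2}^\pm$ over the annulus $\frac{1}{4}\bar{s} < |x'| < \min(2\bar{s},s_j)$, with area at most $C\bar{s}^{n-1}$ since the graphs have bounded slope;
\item the inner cylinder $W_{\bar{s}/4}$ within a slab of height $\leq 6\lambda_0$, with area at most $C\lambda_0 \bar{s}^{n-2}$;
\item if $2\bar{s} < s_j$, the outer cylinder $W_{2\bar{s}}$, with area at most $C\lambda_0\bar{s}^{n-2}$.
\end{itemize}
Any part of $\partial U$ lying on $W_{s_j}$ is outside $E_{\text{\rm slab},4\lambda_0}^{(j)}$ and does not count. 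The metric $g$ is uniformly equivalent to $\bar{g}$ and $\hat{\rho} \leq 2$, so the right-hand side is at most $C\bar{s}^{n-1}$, using $\bar{s} \geq s_* \geq \lambda_0$.

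The main obstacle I expect is the bookkeeping of the competitor: verifying membership in $\mathcal{Z}^{(j)}$ and the reduced-boundary inclusion when $U$ touches the artificial lateral boundary $W_{s_j}$. I would handle both by working with the Lipschitz set $U$ and the usual formula for the perimeter of $\hat{\Omega}^{(j)} \setminus U$ (Maggi, Theorem 16.3), rather than trying to smooth $U$.
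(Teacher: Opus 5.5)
Your proposal is correct and is essentially the paper's proof. The paper takes $U = E_{\text{\rm slab},\frac{5\lambda_0}{2}} \cap \{\frac{1}{4}\bar{s}^2 < x_1^2+\hdots+x_{n-1}^2 < \bar{s}^2\}$, compares $\hat{\Omega}^{(j)}$ with $\hat{\Omega}^{(j)} \setminus U$ exactly as in Lemma \ref{area.bound.for.hat.Omega_j}, and bounds $\int_{\partial U} \hat{\rho} \, d\mathcal{H}^{n-1} \leq C\bar{s}^{n-1}$. Your enlarged annulus, the truncation at $W_{s_j}$, and the use of $\Phi = 0$ instead of $\Phi \leq 0$ are harmless extra care. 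The paper lets the full $\partial U$ term absorb any piece lying on $W_{s_j}$.
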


\textbf{Proof.} 
Let 
\[U = E_{\text{\rm slab},\frac{5\lambda_0}{2}} \cap \Big \{ \frac{1}{4} \, \bar{s}^2 < x_1^2+\hdots+x_{n-1}^2 < \bar{s}^2 \Big \}.\] 
Clearly, $U$ is an open set which is contained in a compact subset of $E_{\text{\rm slab},3\lambda_0} \setminus \mathcal{N}_{(M,g)}(\partial E,\frac{3\eta_0}{2})$. Arguing as in the proof of Lemma \ref{area.bound.for.hat.Omega_j}, we obtain 
\[\int_{\partial^* \hat{\Omega}^{(j)} \cap U} \hat{\rho} \, d\mathcal{H}^{n-1} \leq \int_{\partial U} \hat{\rho} \, d\mathcal{H}^{n-1}.\] 
From this, the assertion follows. This completes the proof of Lemma \ref{area.growth.of.hat.Omega_j}. \\

In the next step, we take a limit of the minimizers $\hat{\Omega}^{(j)}$ as $j \to \infty$. To do that, we use ideas from the work of Eichmair and K\"orber \cite{Eichmair-Koerber}. Using Lemma \ref{area.bound.for.hat.Omega_j}, we obtain local area bounds for $\hat{\Omega}^{(j)}$. We again invoke the compactness theorem for BV functions. After passing to a subsequence, we can find a Caccioppoli set $\hat{\Omega}$ such that $1_{\hat{\Omega}^{(j)}} \to 1_{\hat{\Omega}}$ in $L_{\text{\rm loc}}^1(E_{\text{\rm slab},4\lambda_0})$.

\begin{definition}
Let $\hat{U}$ be an open set which is contained in a compact subset of $E_{\text{\rm slab},3\lambda_0} \setminus \mathcal{N}_{(M,g)}(\partial E,\frac{3\eta_0}{2})$. Given a Caccioppoli set $\Omega \subset E_{\text{\rm slab},4\lambda_0}$, we define 
\[\mathcal{F}(\Omega;\hat{U}) = \int_{\partial^* \Omega \cap \hat{U}} \hat{\rho} \, d\mathcal{H}^{n-1} - \int_{\Omega \cap \hat{U}} \hat{\rho} \, \Phi \, d\mathcal{H}^n.\] 
Since $\Phi \leq 0$ at each point in $E$, we obtain 
\[\mathcal{F}(\Omega;\hat{U}) \geq \int_{\partial^* \Omega \cap \hat{U}} \hat{\rho} \, d\mathcal{H}^{n-1}.\] 
\end{definition}

\begin{lemma}
\label{hat.Omega.is.a.minimizer}
Let $\hat{U}$ be an open domain with smooth boundary which is contained in a compact subset of $E_{\text{\rm slab},3\lambda_0} \setminus \mathcal{N}_{(M,g)}(\partial E,\frac{3\eta_0}{2})$. If $\Omega \subset E_{\text{\rm slab},4\lambda_0}$ is a Caccioppoli set with the property that the symmetric difference $\Omega \triangle \hat{\Omega}$ is contained in a compact subset of $\hat{U}$, then $\mathcal{F}(\hat{\Omega};\hat{U}) \leq \mathcal{F}(\Omega;\hat{U})$. 
\end{lemma}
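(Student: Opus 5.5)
The plan is the standard argument that $L^1_{\rm loc}$ limits of minimizers are local minimizers. We cut and paste the competitor into the approximating minimizers $\hat{\Omega}^{(j)}$, use their minimality in $\mathcal{Z}^{(j)}$, and pass to the limit. We may assume $\mathcal{F}(\Omega;\hat{U}) < \infty$. Fix a compact $K \subset \hat{U}$ containing $\Omega \triangle \hat{\Omega}$.

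First, the set $\hat{U}$ lies in a compact subset of $E_{\text{\rm slab},3\lambda_0}$. Hence for $j$ large, $\hat{U} \subset \{x_1^2+\hdots+x_{n-1}^2 < s_j^2\}$, so $\hat{U} \subset E^{(j)}_{\text{\rm slab},3\lambda_0}$. By the coarea formula and the $L^1$ convergence $1_{\hat{\Omega}^{(j)}} \to 1_{\hat{\Omega}}$ on $\hat{U}$, we can choose an open set $V$ with smooth boundary, $K \subset V \subset\subset \hat{U}$ (take $V$ a sublevel set of a smooth function approximating the distance to $K$), such that after passing to a subsequence:
\begin{itemize}
\item $\int_{\partial V} \hat{\rho} \, |1_{\hat{\Omega}^{(j)}} - 1_{\hat{\Omega}}| \, d\mathcal{H}^{n-1} \to 0$;
\item $\mathcal{H}^{n-1}(\partial^*\hat{\Omega} \cap \partial V) = 0$ and $\mathcal{H}^{n-1}(\partial^*\hat{\Omega}^{(j)} \cap \partial V) = 0$ for all $j$.
\end{itemize}
This is a Fubini argument over a one-parameter family of level sets $\partial V_t$, $t \in (t_0,t_1)$: integrating the boundary integral over $t$ gives the $L^1$ norm on a collar, which tends to zero, so a.e. $t$ works along a subsequence. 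Only countably many $t$ can carry positive perimeter mass.

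Next define the competitor $\Omega^{(j)} = (\Omega \cap V) \cup (\hat{\Omega}^{(j)} \setminus V)$. Since $V \subset\subset \hat{U}$ lies in a compact subset of $E_{\text{\rm slab},3\lambda_0} \setminus \mathcal{N}_{(M,g)}(\partial E,\frac{3\eta_0}{2})$, the constraints defining $\mathcal{Z}^{(j)}$ are untouched. These constraints concern only $\mathcal{N}(\partial E,\eta_0)$ and the $N_\lambda^\pm$ with $\lambda \geq 3\lambda_0$, and $\Omega^{(j)}$ agrees with $\hat{\Omega}^{(j)}$ near those sets. Hence $\Omega^{(j)} \in \mathcal{Z}^{(j)}$ and $\mathcal{F}^{(j)}(\hat{\Omega}^{(j)}) \leq \mathcal{F}^{(j)}(\Omega^{(j)})$. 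The two sets coincide outside $\overline{V}$, so this reduces to
\[\mathcal{F}(\hat{\Omega}^{(j)};V) \leq \mathcal{F}(\Omega;V) + \int_{\partial V} \hat{\rho}\,|1_{\Omega} - 1_{\hat{\Omega}^{(j)}}| \, d\mathcal{H}^{n-1}.\]
This uses the standard formula for the perimeter of a set glued across a smooth hypersurface (Maggi, Theorem 16.3), with traces on $\partial V$. Since $\Omega = \hat{\Omega}$ near $\partial V$, the boundary term tends to $0$ by our choice of $V$.

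To pass to the limit, apply lower semicontinuity of the weighted perimeter on the open set $V$, with $\hat{\rho}$ continuous and positive. The volume terms $\int_{\hat{\Omega}^{(j)} \cap V} \hat{\rho}\,\Phi$ converge because $\Phi$ is bounded on $\overline{V}$, which stays away from $\partial E$. Together these give $\mathcal{F}(\hat{\Omega};V) \leq \mathcal{F}(\Omega;V)$. Finally, $\Omega = \hat{\Omega}$ on $\hat{U} \setminus K$ and $\mathcal{H}^{n-1}(\partial^*\hat{\Omega} \cap \partial V) = 0$. Therefore adding the common contribution of $\hat{U} \setminus \overline{V}$ to both sides yields $\mathcal{F}(\hat{\Omega};\hat{U}) \leq \mathcal{F}(\Omega;\hat{U})$.

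The main obstacle is the careful choice of the cutting hypersurface $\partial V$, which must simultaneously make the trace mismatch vanish in the limit and carry no perimeter. The gluing formula and lower semicontinuity are then routine.
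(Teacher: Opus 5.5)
Your proposal is correct and follows essentially the same route as the paper. Both arguments glue the competitor into $\hat{\Omega}^{(j)}$ across a coarea-selected hypersurface on which the trace mismatch is small, invoke the minimality of $\hat{\Omega}^{(j)}$ in $\mathcal{Z}^{(j)}$, and pass to the limit by lower semicontinuity. The one difference is in how you recover all of $\hat{U}$: the paper lets the cuts $\partial U_{\hat{s}_j}$ approach $\partial \hat{U}$ and concludes by exhaustion, using $\Phi \leq 0$ for monotonicity of $\mathcal{F}(\Omega;\cdot)$ in the domain. You instead fix one cut $\partial V$ that carries no perimeter of $\hat{\Omega}$ and add back the common contribution on $\hat{U} \setminus \overline{V}$, which does not need the sign of $\Phi$.
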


\textbf{Proof.} 
For $s>0$ sufficiently small, we define $U_s = \{x \in \hat{U}: d_{(M,g)}(x,\partial \hat{U}) > s\}$. Recall that 
\[\int_{\hat{U}} |1_{\hat{\Omega}^{(j)}} - 1_{\hat{\Omega}}| \to 0\]
as $j \to \infty$. By the co-area formula, we can find a sequence of positive real numbers $\varepsilon_j \to 0$ and a sequence of positive real numbers $\hat{s}_j \to 0$ with the property that the boundary trace of $1_{\hat{\Omega}^{(j)}} - 1_{\hat{\Omega}}$ along $\partial U_{\hat{s}_j}$ has $L^1(\partial U_{\hat{s}_j})$-norm less than $\varepsilon_j$. 

By assumption, the symmetric difference $\Omega \triangle \hat{\Omega}$ is contained in a compact subset of $\hat{U}$. Hence, if $j$ is sufficiently large, then the boundary trace of $1_{\hat{\Omega}^{(j)}} - 1_\Omega$ along $\partial U_{\hat{s}_j}$ has $L^1(\partial U_{\hat{s}_j})$-norm less than $\varepsilon_j$. 

If $j$ is sufficiently large, then the set $(\hat{\Omega}^{(j)} \setminus U_{\hat{s}_j}) \cup (\Omega \cap U_{\hat{s}_j})$ belongs to $\mathcal{Z}^{(j)}$. Using the minimization property of $\mathcal{F}^{(j)}$, we obtain 
\[\mathcal{F}^{(j)}(\hat{\Omega}^{(j)}) \leq \mathcal{F}^{(j)} \big ( (\hat{\Omega}^{(j)} \setminus U_{\hat{s}_j}) \cup (\Omega \cap U_{\hat{s}_j}) \big )\] 
if $j$ is sufficiently large. This implies 
\[\mathcal{F}(\hat{\Omega}^{(j)};U_{\hat{s}_j}) \leq \mathcal{F}(\Omega;U_{\hat{s}_j}) + C\varepsilon_j\] 
if $j$ is sufficiently large. Finally, we send $j \to \infty$. Since the BV norm is lower semicontinuous with respect to $L^1$ convergence, we conclude that 
\[\mathcal{F}(\hat{\Omega};\tilde{U}) \leq \limsup_{j \to \infty} \mathcal{F}(\hat{\Omega}^{(j)};U_{\hat{s}_j}) \leq \limsup_{j \to \infty} (\mathcal{F}(\Omega;U_{\hat{s}_j}) + C\varepsilon_j) \leq \mathcal{F}(\Omega;\hat{U})\] 
for every open domain $\tilde{U}$ with the property that the closure of $\tilde{U}$ is contained in a compact subset of $\hat{U}$. This finally implies 
\[\mathcal{F}(\hat{\Omega};\hat{U}) \leq \mathcal{F}(\Omega;\hat{U}).\] 
This completes the proof of Lemma \ref{hat.Omega.is.a.minimizer}. \\

\begin{lemma}
\label{density.is.close.to.1}
If $\bar{s}$ is sufficiently large, then the following holds. If $(\bar{x}_1,\hdots,\bar{x}_{n-1})$ is a point in $\mathbb{R}^{n-1}$ with $\sqrt{\bar{x}_1^2+\hdots+\bar{x}_{n-1}^2} = \bar{s}$, then 
\begin{align*} 
&\int_{\partial^* \hat{\Omega} \cap E_{\text{\rm slab},4\lambda_0} \cap \{(x_1-\bar{x}_1)^2+\hdots+(x_{n-1}-\bar{x}_{n-1})^2 < \frac{1}{4} \, \bar{s}^2\}} \hat{\rho} \, d\mathcal{H}^{n-1} \\ 
&\leq (1+o(1)) \, |B^{n-1}| \, \Big ( \frac{1}{2} \, \bar{s} \Big )^{n-1}. 
\end{align*}
\end{lemma}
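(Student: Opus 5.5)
We will use the local minimizing property of $\hat{\Omega}$ from Lemma \ref{hat.Omega.is.a.minimizer}, with a comparison set built from a cylinder over the $(n-1)$-ball. Fix $\bar{x}=(\bar{x}_1,\hdots,\bar{x}_{n-1})$ with $|\bar{x}|=\bar{s}$, and let $D_t=\{(x_1,\hdots,x_{n-1}): |x'-\bar{x}|<t\}$ with $x'=(x_1,\hdots,x_{n-1})$. For $t$ slightly larger than $\frac{1}{2}\bar{s}$, consider the cylinder $\hat{U}=E_{\text{\rm slab},\frac{5\lambda_0}{2}} \cap \{x' \in D_t\}$. When $\bar{s}$ is large, this set lies in the region $r \geq \bar{s}/2$, far inside $E_0$. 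There $\Phi=0$, so $\mathcal{F}$ is just the $\hat{\rho}$-weighted perimeter. Moreover $\hat{U}$ is compactly contained in $E_{\text{\rm slab},3\lambda_0}\setminus\mathcal{N}_{(M,g)}(\partial E,\frac{3\eta_0}{2})$. By Proposition \ref{existence.of.minimizer} (properties (\ref{b}), (\ref{c}), which pass to the limit), $\hat{\Omega}$ contains the region below $N^-_{2\lambda_0}$ and is disjoint from the region above $N^+_{2\lambda_0}$. Hence all of $\partial^*\hat{\Omega}\cap E_{\text{\rm slab},4\lambda_0}$ over $D_t$ lies in the slab $|x_n|\le 2\lambda_0$, which sits inside $\hat{U}$.

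\textbf{Comparison set.} Take $\Omega=(\hat{\Omega}\setminus \hat{U}_\tau)\cup(\hat{U}_\tau\cap\{x_n<0\})$, where $\hat{U}_\tau$ is the slightly smaller cylinder over $D_\tau$, $\tau<t$. Then $\Omega\triangle\hat{\Omega}$ is compactly contained in $\hat{U}$. Lemma \ref{hat.Omega.is.a.minimizer} gives
\[\int_{\partial^*\hat{\Omega}\cap \hat{U}_\tau}\hat{\rho} \leq \int_{\{x_n=0\}\cap\hat{U}_\tau}\hat{\rho} + \int_{\partial^*\hat{\Omega}\cap\hat U \setminus \hat U_\tau} \hat\rho \,(\text{difference}) + \int_{\partial \hat{U}_\tau \cap \{|x_n|\le 2\lambda_0\}} \hat{\rho}.\]
More precisely, the cost of the new boundary is the flat disk plus the part of the vertical wall $\partial D_\tau\times(-2\lambda_0,2\lambda_0)$ needed to glue. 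That wall has $\hat{\rho}$-area at most $C\lambda_0\,\bar{s}^{n-2}=o(\bar{s}^{n-1})$, and we may choose $\tau$ so that the traces match (a.e. $\tau$ works).

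\textbf{Asymptotic estimate.} On the disk $\{x_n=0,\ x'\in D_\tau\}$ we have $g=(1+O(\bar{s}^{2-n}))\bar{g}$ and $\hat{\rho}=1+O(\bar{s}^{2-n})$. Its $\hat{\rho}$-weighted area is therefore $(1+o(1))|B^{n-1}|\tau^{n-1}$. Letting $\tau\downarrow\frac12\bar{s}$ (or simply taking $\tau=\frac12\bar s(1+o(1))$ from the start) gives the stated bound, since the left-hand side dominates the integral over $\{|x'-\bar x|<\frac12\bar s\}$.

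\textbf{Main obstacle.} The delicate point is justifying that the limit $\hat{\Omega}$ still satisfies the barrier containments (\ref{b}), (\ref{c}) over the whole cylinder. Uniformity in $j$ handles this: for $j$ large, $s_j>2\bar{s}$, and $L^1_{\rm loc}$ convergence preserves these inclusions. A second point is checking that the cut-and-paste is admissible in Lemma \ref{hat.Omega.is.a.minimizer}. This follows by choosing $\tau$ via the coarea formula so that the slice $\partial^*\hat{\Omega}\cap\partial\hat{U}_\tau$ is $\mathcal{H}^{n-2}$-negligible and the trace of $1_{\hat{\Omega}}$ on the wall is well defined. The wall term is then bounded crudely by the full wall area $O(\lambda_0\bar{s}^{n-2})$, which is $o(\bar s^{n-1})$.
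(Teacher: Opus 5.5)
Your proof is correct and is essentially the paper's argument: a cut-and-paste comparison in a cylinder over the $(n-1)$-disk of radius about $\frac{1}{2}\bar{s}$, using Lemma \ref{hat.Omega.is.a.minimizer} and $\Phi=0$ far out, where the new boundary is a nearly flat cap of weighted area $(1+o(1))\,|B^{n-1}|\,(\frac{1}{2}\bar{s})^{n-1}$ plus a wall of area $O(\lambda_0\,\bar{s}^{n-2})$. The paper simply takes the competitor $\hat{\Omega}\setminus U$ instead of pasting in $\{x_n<0\}$, so its cap is the bottom face on $N^-_{5\lambda_0/2}$. Your choice of $\tau$ is unnecessary, since the wall is bounded crudely anyway; if you keep it, the condition needed is $\mathcal{H}^{n-1}$-negligibility of $\partial^*\hat{\Omega}$ on the wall, not $\mathcal{H}^{n-2}$.
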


\textbf{Proof.} 
Let 
\[U = E_{\text{\rm slab},\frac{5\lambda_0}{2}} \cap \Big \{ (x_1-\bar{x}_1)^2+\hdots+(x_{n-1}-\bar{x}_{n-1})^2 < \frac{1}{4} \, \bar{s}^2 \Big \}.\] 
We can find an open domain $\hat{U}$ with smooth boundary such that $U$ is contained in a compact subset of $\hat{U}$ and $\hat{U}$ is contained in a compact subset of $E_{\text{\rm slab},3\lambda_0} \setminus \mathcal{N}_{(M,g)}(\partial E,\frac{3\eta_0}{2})$. Using Lemma \ref{hat.Omega.is.a.minimizer}, we obtain 
\[\mathcal{F}(\hat{\Omega};\hat{U}) \leq \mathcal{F}(\hat{\Omega} \setminus U;\hat{U}).\] 
This implies 
\[\int_{\partial^* \hat{\Omega} \cap U} \hat{\rho} \, d\mathcal{H}^{n-1} \leq \int_{\partial U \setminus N_{3\lambda_0}^+} \hat{\rho} \, d\mathcal{H}^{n-1} \leq (1+o(1)) \, |B^{n-1}| \, \Big ( \frac{1}{2} \, \bar{s} \Big )^{n-1}.\] 
This completes the proof of Lemma \ref{density.is.close.to.1}. \\

\begin{definition}
We denote by $\mathcal{S}$ the singular set of $\hat{\Omega}$, and by $\Sigma = (\partial^* \hat{\Omega} \cap E_{\text{\rm slab},4\lambda_0}) \setminus \mathcal{S}$ the regular part of $\hat{\Omega}$. Note that $\Sigma$ is a smooth (possibly disconnected) hypersurface in $E \setminus \mathcal{S}$, and $H_\Sigma + \langle \nabla \log \hat{\rho},\nu_\Sigma \rangle = \Phi$ at each point on $\Sigma$. 
\end{definition}

In the remainder of this subsection, we show that the singular set $\mathcal{S}$ is compact and $\Sigma$ is asymptotically planar near infinity. 

\begin{proposition}
\label{regularity.near.infinity}
The singular set $\mathcal{S}$ is compact. The second fundamental form of $\Sigma$ satisfies $|h_\Sigma| \leq C \, r^{-1}$ near infinity. For every nonnegative integer $m$, the $m$-th order covariant derivative of the second fundamental form of $\Sigma$ is bounded by $C(m) \, r^{-m-1}$ near infinity. 
\end{proposition}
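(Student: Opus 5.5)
The proof is a blow-down argument combined with Allard/De Giorgi type $\varepsilon$-regularity. Near infinity $\Phi=0$, because $\Phi$ vanishes on $E_0$, and $\hat{\rho}\to 1$. Hence by Lemma \ref{hat.Omega.is.a.minimizer}, $\hat{\Omega}$ locally minimizes the weighted perimeter $\int_{\partial^*\Omega}\hat{\rho}\,d\mathcal{H}^{n-1}$ in the region $E_0 \cap E_{\text{\rm slab},3\lambda_0}$. By Proposition \ref{existence.of.minimizer}, $\partial^*\hat{\Omega}$ lies in the closure of $E_{\text{\rm slab},2\lambda_0}$. Near infinity this is a slab of bounded width $|x_n|\le 2\lambda_0+o(1)$. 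Moreover, the region below $N^-_{2\lambda_0}$ lies in $\hat{\Omega}$, and the region above $N^+_{2\lambda_0}$ lies in the complement.

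Fix a point $\bar{x}$ with $\sqrt{\bar{x}_1^2+\hdots+\bar{x}_{n-1}^2}=\bar{s}$ large, and rescale by the factor $\bar{s}^{-1}$ around $(\bar{x}_1,\hdots,\bar{x}_{n-1},0)$. In the rescaled picture the following hold.
\begin{itemize}
\item The metric $\bar{s}^{-2}g$ is $C^\infty$-close to Euclidean on the cylinder $\{(x_1-\bar{x}_1)^2+\hdots+(x_{n-1}-\bar{x}_{n-1})^2<\frac14\bar{s}^2\}$, and $\log\hat{\rho}$ is $C^\infty$-small, with errors $O(\bar{s}^{2-n})$.
\item The rescaled boundary lies in the thin slab $|x_n|\le C\bar{s}^{-1}$ and separates its top from its bottom.
\item By Lemma \ref{density.is.close.to.1}, its weighted area in the cylinder of radius $\frac12$ is at most $(1+o(1))|B^{n-1}|(\frac12)^{n-1}$.
\end{itemize}
Since the set separates the top and bottom of the slab, projecting onto $x_n=0$ shows that the area is also at least $(1-o(1))|B^{n-1}|(\frac12)^{n-1}$. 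Hence the rescaled boundary is a minimizing boundary for a nearly Euclidean elliptic integrand. Its flat excess relative to the plane $\{x_n=0\}$ tends to zero, because the mass is nearly that of a disk and the boundary is confined in a slab of vanishing width.

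The De Giorgi--Allard $\varepsilon$-regularity theorem for (almost) minimizing boundaries then shows that, on the cylinder of radius $\frac14$, the rescaled boundary is a smooth graph over $\{x_n=0\}$ with small $C^{1,\alpha}$ norm. It also shows there are no singular points there. This is a direct application in the form for minimizers of smooth parametric elliptic functionals $\int\hat{\rho}\,d\mathcal{H}^{n-1}$ with respect to a smooth metric. Because the rescaled metric and weight converge smoothly to the Euclidean ones, Schauder theory for the graphical weighted minimal surface equation $H+\langle\nabla\log\hat{\rho},\nu\rangle=0$ upgrades this to uniform $C^{m}$ bounds for all $m$. Scaling back, we obtain $|h_\Sigma|\le C\bar{s}^{-1}$ and $|\nabla^m h_\Sigma|\le C(m)\bar{s}^{-m-1}$. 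Since $r\sim\bar{s}$ in the slab, these are the claimed decay estimates. The same argument shows that $\mathcal{S}$ contains no points with $\sqrt{x_1^2+\hdots+x_{n-1}^2}$ large.

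It remains to prove compactness of $\mathcal{S}$. The points of $\partial^*\hat{\Omega}$ with $\sqrt{x_1^2+\hdots+x_{n-1}^2}$ bounded lie in a bounded subset of the closure of $E_{\text{\rm slab},2\lambda_0}\setminus\mathcal{N}_{(M,g)}(\partial E,2\eta_0)$, which uses that $E\setminus E_0$ is bounded. The singular set is relatively closed in the interior region where $\hat{\Omega}$ is a local minimizer. Together with the result of the previous paragraph, this shows that $\mathcal{S}$ is a closed bounded set contained in the interior, hence compact.

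The main obstacle is verifying the smallness of the excess from the density bound of Lemma \ref{density.is.close.to.1} together with confinement in the slab. In particular, we must check that the matching lower area bound, obtained from the separation property via projection, holds with $o(1)$ errors that are uniform in $\bar{s}$. We also need the $\varepsilon$-regularity theorem to apply to weighted minimizers with a metric that is only asymptotically Euclidean.
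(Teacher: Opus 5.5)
Your proposal is correct and follows essentially the same route as the paper. The paper's proof is one line: it combines the density bound of Lemma \ref{density.is.close.to.1} with Allard's regularity theorem and standard interior estimates. Your rescaling by $\bar{s}^{-1}$, the excess estimate from the separation property, the $\varepsilon$-regularity step and the Schauder upgrade are the details behind that line, and your closed-and-bounded argument for the compactness of $\mathcal{S}$ fills in a step the paper leaves implicit.
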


\textbf{Proof.} 
This follows from Lemma \ref{density.is.close.to.1} together with Allard's regularity theorem (see \cite{Allard} or \cite{Simon}) and standard interior estimates. \\

\begin{corollary}
\label{graphical}
Near infinity, the hypersurface $\Sigma$ can be written as a graph $x_n = u(x_1,\hdots,x_{n-1})$. The function $u$ is bounded. For every nonnegative integer $m$, the $m$-th order derivatives of $u$ are bounded by $C(m) \, (x_1^2+\hdots+x_{n-1}^2)^{-\frac{m}{2}}$ near infinity.
\end{corollary}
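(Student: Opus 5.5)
The plan is to work at the natural scale $\bar{s} = \sqrt{x_1^2+\hdots+x_{n-1}^2}$ and rescale. Proposition \ref{regularity.near.infinity} then gives uniform geometry, and Proposition \ref{existence.of.minimizer} says the hypersurface is trapped in a slab of fixed height. The decay of the derivatives of $u$ will come from the fact that, after rescaling by $\bar{s}^{-1}$, the slab has height $O(\bar{s}^{-1})$, combined with interior estimates for the elliptic equation satisfied by the graph.

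\textbf{Step 1: confinement and boundedness.} By Proposition \ref{existence.of.minimizer}, together with the $L^1_{\text{\rm loc}}$ convergence $1_{\hat{\Omega}^{(j)}} \to 1_{\hat{\Omega}}$, the set $\hat{\Omega}$ has two properties. It contains the region below $N_{2\lambda_0}^-$, and it is disjoint from the region above $N_{2\lambda_0}^+$. Consequently $\partial^* \hat{\Omega}$ lies in the closure of $E_{\text{\rm slab},2\lambda_0}$, so $|x_n| \leq 2\lambda_0$ on $\Sigma$. Moreover, every vertical line $\{x' = \text{const}\}$ with $|x'|$ large passes from $\hat{\Omega}$ to its complement, so it meets $\Sigma$ at least once. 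This will give boundedness of $u$, once we know $\Sigma$ is a graph.

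\textbf{Step 2: rescaled picture and graphicality.} Fix $\bar{x}' \in \mathbb{R}^{n-1}$ with $|\bar{x}'| = \bar{s}$ large, and consider $\Sigma_{\bar{s}} = \bar{s}^{-1}(\Sigma - (\bar{x}',0))$ inside the unit-scale cylinder $\{|x'| < \frac{1}{2}\}$. Four facts hold for $\Sigma_{\bar{s}}$:
\begin{itemize}
\item By Proposition \ref{regularity.near.infinity}, it has $|h| \leq C$ and $|\nabla^m h| \leq C(m)$.
\item By Lemma \ref{density.is.close.to.1}, its weighted area in the cylinder is at most $(1+o(1))|B^{n-1}|2^{1-n}$.
\item By Step 1, it is contained in the thin slab $|x_n| \leq 2\lambda_0 \bar{s}^{-1}$.
\item The ambient metric is $\bar{g} + O(\bar{s}^{2-n})$ in $C^m$.
\end{itemize}
The area bound forces multiplicity one. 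The confinement to a slab of vanishing thickness forces the approximating plane in Allard's theorem to be the horizontal one. Therefore, in a slightly smaller cylinder, $\Sigma_{\bar{s}}$ is a single-valued graph $x_n = u_{\bar{s}}(x')$ with small $C^{1,\alpha}$ norm. Patching these local graphs over all $\bar{x}'$, and using the crossing property from Step 1, shows that $\Sigma$ is globally a graph $x_n = u(x')$ outside a compact set. Here $u(x') = \bar{s}\,u_{\bar{s}}((x'-\bar{x}')/\bar{s})$, and $|u| \leq 2\lambda_0$.

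\textbf{Step 3: decay of derivatives (main obstacle).} Bounded geometry alone, plus $|u_{\bar{s}}| \leq C\bar{s}^{-1}$, only gives $|Du_{\bar{s}}| \lesssim \bar{s}^{-1/2}$ by interpolation, which is not enough. So I will use the equation instead. On $E_0$ we have $\Phi = 0$, so $u_{\bar{s}}$ solves the rescaled weighted-minimal-surface equation
\[H + \langle \nabla \log\hat{\rho}, \nu\rangle = 0.\]
This is a quasilinear equation in divergence form. Its coefficients are smooth with uniform $C^m$ bounds, because $|Du_{\bar{s}}|$ is small and the metric is nearly Euclidean. Its inhomogeneous terms come from $g - \bar{g}$ and $d\log\hat{\rho}$, and after rescaling they are $O(\bar{s}^{1-n})$. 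Viewing it as a linear equation for $u_{\bar{s}}$ with these frozen coefficients, interior Schauder estimates, bootstrapped to all orders, give
\[\|u_{\bar{s}}\|_{C^m(\{|x'|<1/4\})} \leq C(m)\big(\sup|u_{\bar{s}}| + \bar{s}^{1-n}\big) \leq C(m)\,\bar{s}^{-1}.\]
Undoing the scaling gives $|D^m u|(\bar{x}') = \bar{s}^{1-m}|D^m u_{\bar{s}}|(0) \leq C(m)\,\bar{s}^{-m}$, which is the claimed bound. The delicate point is ensuring the Schauder constants are uniform in $\bar{s}$. This follows from the uniform bounds in Proposition \ref{regularity.near.infinity} and the decay assumptions on $g$ and $\hat{\rho}$.
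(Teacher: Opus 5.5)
Your proposal is correct and follows the paper's route. The paper's one-line proof combines the same three ingredients: the scale-invariant curvature bounds of Proposition \ref{regularity.near.infinity}, the density bound of Lemma \ref{density.is.close.to.1}, and confinement to a slab. You additionally make explicit the rescaled interior Schauder step that the paper leaves implicit, and you are right that interpolation alone would not give the sharp rate $|D^m u| \le C(m)\,\bar{s}^{-m}$. One small slip: after rescaling, the inhomogeneous terms are $O(\bar{s}^{2-n})$ rather than $O(\bar{s}^{1-n})$, which is harmless since $\bar{s}^{2-n} \le \bar{s}^{-1}$.
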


\textbf{Proof.} 
This follows by combining Proposition \ref{regularity.near.infinity}, the density bound in Lemma \ref{density.is.close.to.1}, and the fact that $\Sigma$ is contained in a slab. \\

\begin{proposition}
\label{asymptotic.expansion.of.u}
There exist real numbers $c_0$ and $\mu_0$ such that 
\begin{align*} 
&\Big | u(x_1,\hdots,x_{n-1}) - \big ( c_0+\mu_0 \, (x_1^2+\hdots+x_{n-1}^2)^{\frac{3-n}{2}} \big ) \Big | \\ 
&\leq O \big ( (x_1^2+\hdots+x_{n-1}^2)^{\frac{3-n-\hat{\delta}}{2}} \big ), 
\end{align*} 
and we have analogous estimates for the higher derivatives of $u$.
\end{proposition}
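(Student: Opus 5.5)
We write the equation $H_\Sigma + \langle \nabla \log \hat{\rho},\nu_\Sigma \rangle = \Phi$ near infinity as a quasilinear elliptic PDE for $u$ on the exterior of a large ball in $\mathbb{R}^{n-1}$, and then derive the expansion from linear decay theory for the flat Laplacian.

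\textbf{Step 1: the equation for $u$.} Near infinity, $\Sigma \subset E_0$, so $\Phi = 0$ there. Hence $\Sigma$ satisfies $H_\Sigma + \langle \nabla \log \hat{\rho},\nu_\Sigma \rangle = 0$. We expand this equation in terms of $y=(x_1,\hdots,x_{n-1})$ and $u$, using the following facts:
\begin{itemize}
\item $g = (1+\alpha r^{2-n})\bar{g} + O(r^{2-n-2\delta})$ with the corresponding derivative bounds;
\item $\hat{\rho} = 1+\hat{\beta} r^{2-n} + O(r^{2-n-2\hat{\delta}})$;
\item $|Du| \leq C|y|^{-1}$ and $|D^2u| \leq C|y|^{-2}$ by Corollary \ref{graphical};
\item $u$ is bounded, so $r = |y| + O(|y|^{-1})$ on $\Sigma$.
\end{itemize}
This gives
\[\text{div}_{\mathbb{R}^{n-1}}\Big(\frac{Du}{\sqrt{1+|Du|^2}}\Big) = -\frac{1}{2}(n-2)((n-1)\alpha+2\hat{\beta})\, u\, |y|^{-n} + \text{(error)}.\]
The leading metric and weight corrections enter through their $x_n$-derivatives, which are $O(|u|\,|y|^{-n})$. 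The remaining terms are bounded by $O(|y|^{1-n-2\hat{\delta}})$ plus terms quadratic in $Du$ times $|y|^{1-n}$.

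\textbf{Step 2: improve the decay of $Du$.} The right-hand side and the nonlinearity $|Du|^2 D^2u$ are only $O(|y|^{-n}) + O(|y|^{-4})$ a priori. For $n=4$ the cubic term is borderline, so we first bootstrap. Write the equation as $\Delta_{\mathbb{R}^{n-1}} u = f$ with $f = O(|y|^{-2-\epsilon})$ for some $\epsilon>0$: indeed $|Du|^2|D^2u| = O(|y|^{-4})$ and $n \geq 4$. Then standard decay theory for the Laplacian on exterior domains in $\mathbb{R}^{n-1}$ (again \cite{Meyers}, or Schauder estimates on dyadic annuli combined with the fact that $u$ is bounded) gives the following. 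There is a constant $c_0$ such that $u - c_0$ decays, hence $|Du| = O(|y|^{-1-\epsilon'})$. Iterating, we obtain $|D^m(u-c_0)| = O(|y|^{3-n-m})$, and the quadratic and cubic error terms become $O(|y|^{1-n-2\hat{\delta}})$ or better.

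\textbf{Step 3: extract the harmonic term.} Now $\Delta u = f$ with $|D^m f| = O(|y|^{1-n-m-\hat{\delta}'})$ for some $\hat{\delta}'>0$. The term $c_0 |y|^{-n}$ is of this order, since $n > n-1+\hat{\delta}'$ for small $\hat{\delta}'$. Solving with the Newtonian potential in $\mathbb{R}^{n-1}$, whose fundamental solution is $|y|^{3-n}$, we get
\[u = c_0 + \mu_0 |y|^{3-n} + O(|y|^{3-n-\hat{\delta}}),\]
with matching derivative estimates. Here $\mu_0$ is determined by the flux of $Du$ at infinity, corrected by the integral of $f$. Shrinking $\hat{\delta}$ if necessary gives the claimed exponent.

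\textbf{Main obstacle.} The hard part is Step 2: showing that $u$ converges to a constant $c_0$ at all, and with a rate. Boundedness together with $|Du| \leq C|y|^{-1}$ does not exclude slowly oscillating behavior. I would handle this through the equation. We have $\Delta u = f$ with $f \in L^1$-type decay $O(|y|^{-2-\epsilon})$. Let $w$ be the Newtonian potential of a cutoff of $f$; then $u - w$ is a bounded harmonic function on an exterior domain in $\mathbb{R}^{n-1}$, where $n-1 \geq 3$. Expanding $u-w$ in spherical harmonics shows it equals a constant plus $O(|y|^{3-n})$. A secondary point is controlling the remainder in the metric expansion uniformly, which is routine given that $\Sigma$ lies in the slab $|x_n| \leq 4\lambda_0$.
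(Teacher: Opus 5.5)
Your proof is correct, but it is organized differently from the paper's. The paper never expands the graphical mean curvature operator. It uses $H_\Sigma + \langle \nabla \log \hat{\rho},\nu_\Sigma \rangle = 0$ to write $\Delta_\Sigma x_n + \langle \nabla^\Sigma \log \hat{\rho},\nabla^\Sigma x_n \rangle = \Delta x_n - (D^2 x_n)(\nu_\Sigma,\nu_\Sigma) + \langle \nabla \log \hat{\rho},\nabla x_n \rangle$ for the restricted coordinate function. It then bounds the right-hand side by $O(r^{1-n-\hat{\delta}})$ using nothing but $|x_n| \leq C$ on $\Sigma$. Since $x_n = u$ on $\Sigma$, this gives an equation $a_{ij} \, \partial_i \partial_j u + b_j \, \partial_j u = G$ whose forcing $G$ already has the decay needed for the expansion. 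All the nonlinearity sits in the coefficients (the induced metric and its Christoffel symbols), which tend to those of the flat Laplacian, and the conclusion is quoted from Meyers.

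You instead move the nonlinear and lower-order pieces into the right-hand side of $\Delta u = f$. You then run the decay bootstrap by hand: Newtonian potentials, Schauder estimates on dyadic annuli, and the expansion of bounded harmonic functions on exterior domains of $\mathbb{R}^{n-1}$ with $n-1 \geq 3$. Written out, the two equations are the same. The paper's perturbation $(a_{ij}-\delta_{ij})\,\partial_i\partial_j u$ has size $O(|Du|^2 |D^2 u| + |y|^{2-n} |D^2 u|)$, the same order as your cubic term, so the appeal to Meyers tacitly contains the iteration you carry out explicitly. Your route is self-contained but involves more bookkeeping. The paper's coordinate-function device gets the critical forcing bound with no bootstrap at all.

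A few minor slips, none of which affects the argument:
\begin{itemize}
\item The expansion also produces terms linear in $Du$ with coefficients of size $O(|y|^{1-n})$, not only quadratic ones. Examples are terms of the form $r^{-n} \, y \cdot Du$ from tangential derivatives of the conformal factor and of $\log \hat{\rho}$. These are $O(|y|^{-n})$ a priori, so they are harmless.
\item The cubic term is already harmless for $n=4$, since $O(|y|^{-4})$ beats $|y|^{-3-\hat{\delta}}$. The borderline case is $n=5$, where $O(|y|^{-4})$ is exactly the critical rate $|y|^{1-n}$. Your bootstrap covers all $n$ anyway.
\item You do not need to shrink $\hat{\delta}$. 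After the bootstrap, $f = O(|y|^{1-n-\min\{1,2\hat{\delta}\}})$, which yields the error $O(|y|^{3-n-\hat{\delta}})$ as long as $\hat{\delta} < 1$. The paper tacitly assumes this too.
\end{itemize}
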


\textbf{Proof.} 
The hypersurface $\Sigma$ satisfies $H_\Sigma + \langle \nabla \log \hat{\rho},\nu_\Sigma \rangle = 0$ near infinity. Hence, the restriction of the coordinate function $x_n$ to $\Sigma$ satisfies 
\begin{equation} 
\label{pde.for.restriction.of.x_n.to.Sigma}
\Delta_\Sigma x_n + \langle \nabla^\Sigma \log \hat{\rho},\nabla^\Sigma x_n \rangle = \Delta x_n - (D^2 x_n)(\nu_\Sigma,\nu_\Sigma) + \langle \nabla \log \hat{\rho},\nabla x_n \rangle 
\end{equation}
near infinity. 

Since $\Sigma$ is contained in a slab, we know that $|x_n| \leq C$ at each point in $\Sigma$. From this, we deduce that 
\begin{equation} 
\label{ambient.Laplacian}
|\Delta x_n| \leq O(r^{1-n-\hat{\delta}}), 
\end{equation}
\begin{equation}
\label{Hessian.in.direction.of.nabla.x_n}
|(D^2 x_n)(\nabla x_n,\nabla x_n)| \leq O(r^{1-n-\hat{\delta}}), 
\end{equation}
and 
\begin{equation} 
\label{drift.term}
|\langle \nabla \log \hat{\rho},\nabla x_n \rangle| \leq O(r^{1-n-\hat{\delta}}) 
\end{equation}
along $\Sigma$. The inequality (\ref{Hessian.in.direction.of.nabla.x_n}) implies 
\begin{equation}
\label{Hessian.in.normal.direction}
|(D^2 x_n)(\nu_\Sigma,\nu_\Sigma)| \leq O(r^{1-n-\hat{\delta}}) 
\end{equation}
along $\Sigma$. Combining (\ref{pde.for.restriction.of.x_n.to.Sigma}), (\ref{ambient.Laplacian}), (\ref{drift.term}), and (\ref{Hessian.in.normal.direction}), we conclude that 
\[|\Delta_\Sigma x_n + \langle \nabla^\Sigma \log \hat{\rho},\nabla^\Sigma x_n \rangle| \leq O(r^{1-n-\hat{\delta}})\] 
on $\Sigma$. Since $x_n = u(x_1,\hdots,x_{n-1})$ at along $\Sigma$, it follows that 
\[|\Delta_\Sigma u(x_1,\hdots,x_{n-1}) + \langle \nabla^\Sigma \log \hat{\rho},\nabla^\Sigma u(x_1,\hdots,x_{n-1}) \rangle| \leq O(r^{1-n-\hat{\delta}})\] 
on $\Sigma$. Therefore, the function $u$ satisfies a linear PDE of the form 
\begin{align*} 
&\sum_{i,j=1}^{n-1} a_{ij}(x_1,\hdots,x_{n-1}) \, \partial_i \partial_j u + \sum_{j=1}^n b_j(x_1,\hdots,x_{n-1}) \, \partial_j u(x_1,\hdots,x_{n-1}) \\ 
&= G(x_1,\hdots,x_{n-1}) 
\end{align*}
near infinity, where
\[\max_{1 \leq i,j \leq n-1} |a_{ij}(x_1,\hdots,x_{n-1}) - \delta_{ij}| \leq O \big ( (x_1^2+\hdots+x_{n-1}^2)^{-\frac{1}{2}} \big ),\] 
\[\max_{1 \leq j \leq n-1} |b_j(x_1,\hdots,x_{n-1})| \leq O \big ( (x_1^2+\hdots+x_{n-1}^2)^{-1} \big ),\] 
and 
\[|G(x_1,\hdots,x_{n-1})| \leq O \big ( (x_1^2+\hdots+x_{n-1}^2)^{\frac{1-n-\hat{\delta}}{2}} \big ).\] 
Moreover, we have analogous estimates for the higher derivatives of $a_{ij}$, $b_j$, and $G$. Since $u$ is bounded, the assertion follows from standard results about linear PDE (see \cite{Meyers}). This completes the proof of Proposition \ref{asymptotic.expansion.of.u}. \\

\subsection{The stability inequality for the $\mu$-bubble} In this subsection, we show that $\hat{\Omega}$ satisfies a stability inequality. In the first step, we state the stability inequality for $\hat{\Omega}^{(j)}$. In the second step, we will pass to the limit as $j \to \infty$.

\begin{proposition}
\label{stability.inequality.for.hat.Omega_j}
Let $j$ be a large integer. Suppose that $a$ is a real number and $V$ is a smooth vector field on $E$ such that $V = a \, \frac{\partial}{\partial x_n}$ in a neighborhood of $W_{s_j} \cap E_{\text{\rm slab},3\lambda_0}$. Then 
\begin{align*} 
&\frac{1}{2} \int_{\partial^* \hat{\Omega}^{(j)} \cap E_{\text{\rm slab},4\lambda_0}^{(j)}} \hat{\rho} \, \sum_{k=1}^{n-1} (\mathscr{L}_V \mathscr{L}_V g)(e_k,e_k) \, d\mathcal{H}^{n-1} \\ 
&+ \int_{\partial^* \hat{\Omega}^{(j)} \cap E_{\text{\rm slab},4\lambda_0}^{(j)}} V(V(\hat{\rho})) \, d\mathcal{H}^{n-1} \\ 
&- \frac{1}{2} \int_{\partial^* \hat{\Omega}^{(j)} \cap E_{\text{\rm slab},4\lambda_0}^{(j)}} \hat{\rho} \, \sum_{k,l=1}^{n-1} (\mathscr{L}_V g)(e_k,e_l) \, (\mathscr{L}_V g)(e_k,e_l) \, d\mathcal{H}^{n-1} \\ 
&+ \frac{1}{4} \int_{\partial^* \hat{\Omega}^{(j)} \cap E_{\text{\rm slab},4\lambda_0}^{(j)}} \hat{\rho} \, \sum_{k,l=1}^{n-1} (\mathscr{L}_V g)(e_k,e_k) \, (\mathscr{L}_V g)(e_l,e_l) \, d\mathcal{H}^{n-1} \\ 
&+ \int_{\partial^* \hat{\Omega}^{(j)} \cap E_{\text{\rm slab},4\lambda_0}^{(j)}} V(\hat{\rho}) \, \sum_{k=1}^{n-1} (\mathscr{L}_V g)(e_k,e_k) \, d\mathcal{H}^{n-1} \\ 
&\geq \int_{\hat{\Omega}^{(j)}} \text{\rm div}(\text{\rm div}(\hat{\rho} \, \Phi \, V) \, V) \, d\mathcal{H}^n. 
\end{align*}
\end{proposition}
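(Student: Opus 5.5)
The inequality is the second variation of $\mathcal{F}^{(j)}$ along the flow of $V$. The plan is to realize it by pushing $\hat{\Omega}^{(j)}$ forward by the flow $\psi_t$ of $V$ and using minimality of $\hat{\Omega}^{(j)}$ in $\mathcal{Z}^{(j)}$ to get $\frac{d^2}{dt^2}\big|_{t=0}\mathcal{F}^{(j)}(\psi_t(\hat{\Omega}^{(j)})) \geq 0$. Working with the flow of an ambient vector field, rather than normal variations of a smooth hypersurface, has a clear advantage: the computation makes sense for the Caccioppoli set $\hat{\Omega}^{(j)}$ even though its boundary may have singularities, since only the reduced boundary and the $\mathcal{H}^{n-1}$ measure enter.

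\textbf{Step 1 (admissibility).} I must check that $\psi_t(\hat{\Omega}^{(j)}) \in \mathcal{Z}^{(j)}$ for small $|t|$, or at least that the functional is still minimized within the competitor class.

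Near $W_{s_j}\cap E_{\text{slab},3\lambda_0}$ the field is $V = a\,\partial_{x_n}$, which is tangent to $W_{s_j}$. So the flow preserves the cylinder $\{x_1^2+\dots+x_{n-1}^2<s_j^2\}$ near the lateral boundary.

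The constraints on $\hat{\Omega}^{(j)}$ from Proposition \ref{existence.of.minimizer} hold with margin: the sets (\ref{a}), (\ref{b}), (\ref{c}) involve $2\eta_0$ and $2\lambda_0$, whereas membership in $\mathcal{Z}^{(j)}$ only requires $\eta_0$ and $3\lambda_0$. Hence for $|t|$ small, the set $\psi_t(\hat{\Omega}^{(j)})$ still avoids $\mathcal{N}(\partial E,\eta_0)$ and $\bigcup_{\lambda\ge 3\lambda_0}N^+_\lambda$, and still contains $\bigcup_{\lambda\ge 3\lambda_0}N^-_\lambda$ inside the cylinder. One caveat: the translation near $W_{s_j}$ slightly shifts the region near the cylinder wall. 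If this becomes an issue, I would first modify $V$ away from $E_{\text{slab},3\lambda_0}$ by a cutoff, which does not affect any integrand supported on $\partial^*\hat{\Omega}^{(j)}\subset \overline{E_{\text{slab},2\lambda_0}}$.

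\textbf{Step 2 (area term).} Write the weighted area as
\[\int_{\partial^*\hat{\Omega}^{(j)}} (\psi_t^*\hat{\rho})\, J_t \, d\mathcal{H}^{n-1},\]
where $J_t$ is the tangential Jacobian, i.e. the square root of the determinant of $(\psi_t^*g)(e_k,e_l)$.

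Using $\frac{d}{dt}\psi_t^*g = \psi_t^*\mathscr{L}_Vg$ and expanding the determinant to second order gives
\[\frac{d^2}{dt^2}J_t\Big|_{0} = \frac12\sum_k(\mathscr{L}_V\mathscr{L}_Vg)(e_k,e_k) - \frac12|\mathscr{L}_Vg|_T^2 + \frac14\big(\mathrm{tr}_T\mathscr{L}_Vg\big)^2.\]
Here $|\cdot|_T$ and $\mathrm{tr}_T$ denote the norm and trace over the tangent frame $e_1,\dots,e_{n-1}$. Differentiating $\hat{\rho}\circ\psi_t$ twice gives $V(V(\hat{\rho}))$, and the cross term gives $2\cdot V(\hat{\rho})\cdot\frac12\mathrm{tr}_T\mathscr{L}_Vg$. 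Taken together, these are exactly the five terms on the left-hand side of the statement.

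The boundary piece of $\partial^*$ on $\partial E^{(j)}_{\text{slab},4\lambda_0}$ is excluded from the integral. It does not change, because $V$ is tangent to $W_{s_j}$ there and the set stays away from the other boundary components.

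\textbf{Step 3 (bulk term).} Compute
\[\int_{\psi_t(\hat{\Omega}^{(j)})}\hat{\rho}\,\Phi = \int_{\hat{\Omega}^{(j)}}(\hat{\rho}\Phi)\circ\psi_t\;\det D\psi_t .\]
Its first $t$-derivative is $\int_{\psi_t(\hat{\Omega})}\mathrm{div}(\hat{\rho}\Phi V)$, by the transport formula. Applying the transport formula once more to the integrand $\mathrm{div}(\hat{\rho}\Phi V)$ gives the second derivative
\[\int_{\hat{\Omega}^{(j)}}\mathrm{div}\big(\mathrm{div}(\hat{\rho}\Phi V)\,V\big).\]
This is the right-hand side of the statement, entering with a minus sign in $\mathcal{F}^{(j)}$. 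Rearranging yields the stated inequality.

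\textbf{Main obstacle.} I expect Step 1 to be the main difficulty: justifying that the flowed sets are admissible competitors, and that the lateral boundary $W_{s_j}$ contributes nothing. The second-order Jacobian expansion in Step 2 is routine but needs care with the factors.
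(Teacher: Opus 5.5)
Your strategy is the one the paper intends; its proof is only the remark that the inequality follows from minimality of $\hat{\Omega}^{(j)}$. Your second-variation computation is correct: the second $t$-derivative of $(\hat{\rho}\circ\psi_t)\,J_t$ at $t=0$ gives exactly the five terms on the left, and two applications of the transport formula give the right-hand side.

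The gap is in Step 1 and in the last sentence of Step 2, where you say the set stays away from the other boundary components. That is false. By (\ref{c}), $\hat{\Omega}^{(j)}$ contains the whole layer $\bigcup_{\lambda \in [2\lambda_0,4\lambda_0)} N_\lambda^-$ inside the cylinder. So it reaches the face $N_{4\lambda_0}^-$ of $E_{\text{\rm slab},4\lambda_0}^{(j)}$. It also reaches $W_{s_j}$ outside $E_{\text{\rm slab},3\lambda_0}$, where the hypothesis says nothing about $V$. The margin you invoke ($2\lambda_0$ versus $3\lambda_0$) exists only at the inner face of that layer.

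Now take $V = a \, \frac{\partial}{\partial x_n}$, which is the field fed into this proposition in the proof of Proposition \ref{stability.inequality.for.hat.Omega}. Its flow is transverse to $N_{4\lambda_0}^-$, so for one sign of $t$ it opens a gap of width about $|a t|$ along $N_{4\lambda_0}^-$. The flowed set then violates the containment constraint in $\mathcal{Z}^{(j)}$. It also acquires new interior boundary whose area does not tend to $0$ as $t \to 0$. Thus $t \mapsto \mathcal{F}^{(j)}(\psi_t(\hat{\Omega}^{(j)}))$ is admissible only on one side and gives no second-order information. The cutoff you offer as a fallback is therefore necessary. The real trouble is at $N_{4\lambda_0}^-$ and on $W_{s_j}$ outside $E_{\text{\rm slab},3\lambda_0}$, not on the part of the wall where $V = a \, \frac{\partial}{\partial x_n}$.

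To close the gap, replace $V$ by $\chi V$, where $\chi$ is a scalar function equal to $1$ on a neighborhood of $\overline{E_{\text{\rm slab},5\lambda_0/2}}$ and vanishing outside $E_{\text{\rm slab},11\lambda_0/4}$. The set $W_{s_j} \cap \overline{E_{\text{\rm slab},11\lambda_0/4}}$ is a compact subset of $W_{s_j} \cap E_{\text{\rm slab},3\lambda_0}$. Hence $\chi V$ is a multiple of $\frac{\partial}{\partial x_n}$ near the wall and vanishes near $N_{4\lambda_0}^\pm$. Its flow preserves all three constraints for small $|t|$, and it maps $\partial^* \hat{\Omega}^{(j)} \cap E_{\text{\rm slab},4\lambda_0}^{(j)}$ onto the corresponding part of $\partial^* \psi_t(\hat{\Omega}^{(j)})$.

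You must then transfer the inequality from $\chi V$ back to $V$. You checked only the left side, which depends on the $2$-jet of $V$ along $\partial^* \hat{\Omega}^{(j)} \subset \overline{E_{\text{\rm slab},2\lambda_0}}$. The right side is a bulk integral over all of $\hat{\Omega}^{(j)}$, and $\hat{\Omega}^{(j)}$ does extend into the region where $V$ was changed. The right side is nevertheless unchanged, because that region lies in the open set $E_0$, where $\Phi \equiv 0$. Hence $\text{\rm div}(\hat{\rho} \, \Phi \, V)$, and with it $\text{\rm div}(\text{\rm div}(\hat{\rho} \, \Phi \, V) \, V)$, vanishes identically there for any vector field. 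With these two additions your argument is complete.
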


\textbf{Proof.}
This follows from the fact that $\hat{\Omega}^{(j)}$ is a minimizer of the functional $\mathcal{F}^{(j)}$. \\

\begin{proposition}
\label{stability.inequality.for.hat.Omega}
Suppose that $a$ is a real number and $V$ is a smooth vector field on $E$ such that $V = a \, \frac{\partial}{\partial x_n}$ near infinity. Then 
\begin{align*} 
&\frac{1}{2} \int_{\partial^* \hat{\Omega} \cap E_{\text{\rm slab},4\lambda_0}} \hat{\rho} \, \sum_{k=1}^{n-1} (\mathscr{L}_V \mathscr{L}_V g)(e_k,e_k) \, d\mathcal{H}^{n-1} \\ 
&+ \int_{\partial^* \hat{\Omega} \cap E_{\text{\rm slab},4\lambda_0}} V(V(\hat{\rho})) \, d\mathcal{H}^{n-1} \\ 
&- \frac{1}{2} \int_{\partial^* \hat{\Omega} \cap E_{\text{\rm slab},4\lambda_0}} \hat{\rho} \, \sum_{k,l=1}^{n-1} (\mathscr{L}_V g)(e_k,e_l) \, (\mathscr{L}_V g)(e_k,e_l) \, d\mathcal{H}^{n-1} \\ 
&+ \frac{1}{4} \int_{\partial^* \hat{\Omega} \cap E_{\text{\rm slab},4\lambda_0}} \hat{\rho} \, \sum_{k,l=1}^{n-1} (\mathscr{L}_V g)(e_k,e_k) \, (\mathscr{L}_V g)(e_l,e_l) \, d\mathcal{H}^{n-1} \\ 
&+ \int_{\partial^* \hat{\Omega} \cap E_{\text{\rm slab},4\lambda_0}} V(\hat{\rho}) \, \sum_{k=1}^{n-1} (\mathscr{L}_V g)(e_k,e_k) \, d\mathcal{H}^{n-1} \\ 
&\geq \int_{\hat{\Omega}} \text{\rm div}(\text{\rm div}(\hat{\rho} \, \Phi \, V) \, V) \, d\mathcal{H}^n. 
\end{align*}
\end{proposition}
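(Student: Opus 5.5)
We pass to the limit $j \to \infty$ in Proposition \ref{stability.inequality.for.hat.Omega_j}. Fix $a$ and $V$ with $V = a \, \frac{\partial}{\partial x_n}$ outside a compact set $K$. For $j$ large, $W_{s_j} \cap E_{\text{\rm slab},3\lambda_0}$ lies outside $K$, so Proposition \ref{stability.inequality.for.hat.Omega_j} applies to $V$ for all large $j$.

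\textbf{Splitting the integrals.} We split each boundary integral into a contribution from a fixed compact region $\{x_1^2+\hdots+x_{n-1}^2 < \bar{s}^2\}$ and a contribution from the exterior region. On the compact region we have $1_{\hat{\Omega}^{(j)}} \to 1_{\hat{\Omega}}$ in $L^1_{\text{\rm loc}}$. Moreover, $\hat{\Omega}^{(j)}$ are (almost) minimizers with the uniform area bounds of Lemma \ref{area.bound.for.hat.Omega_j}. By the standard argument (cut-and-paste as in Lemma \ref{hat.Omega.is.a.minimizer}, applied with $\Omega=\hat{\Omega}$), the weighted perimeter measures converge:
\[\hat{\rho} \, \mathcal{H}^{n-1} \llcorner \partial^* \hat{\Omega}^{(j)} \to \hat{\rho} \, \mathcal{H}^{n-1} \llcorner \partial^* \hat{\Omega}\]
as Radon measures on $E_{\text{\rm slab},3\lambda_0} \setminus \mathcal{N}(\partial E,\frac{3\eta_0}{2})$. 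The associated varifolds then converge as well, so that the unit normals converge in the varifold sense. The boundary integrands are continuous functions of the point and of the tangent plane (through $e_k$), so the left-hand side restricted to the compact region converges. The right-hand side is a volume integral of a smooth function, which converges by $L^1_{\text{\rm loc}}$ convergence.

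\textbf{Contributions near infinity.} Outside $K$ we have $V = a \, \partial_{x_n}$. The terms involving $\mathscr{L}_V g = a \, \mathscr{L}_{\partial_n} g$ are $O(r^{1-n-2\hat\delta})$. Quadratic terms in $\mathscr{L}_V g$ and $\mathscr{L}_V\mathscr{L}_V g$ are $O(r^{-n})$, and $V(V(\hat{\rho})) = a^2 \partial_n^2 \hat\rho = O(r^{-n})$. Using the area growth bound of Lemma \ref{area.growth.of.hat.Omega_j} on dyadic annuli, the boundary integrals over $\{\bar{s}^2 \le x_1^2+\hdots+x_{n-1}^2\}$ are bounded by $C\sum_{k} (2^k\bar s)^{n-1} (2^k \bar s)^{-n} $. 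This sum is not obviously summable, so we need one more factor of decay.

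Here we use the structure more carefully. First, $\mathscr{L}_{\partial_n} g$ is $O(r^{1-n})$, so the quadratic terms are $O(r^{2-2n})$, which is harmless. Second, the term $\frac12 \hat\rho\, \mathrm{tr}\,\mathscr{L}_V\mathscr{L}_V g + V(V\hat\rho) + V(\hat\rho)\,\mathrm{tr}\,\mathscr{L}_V g$ is linear in second derivatives and is $O(r^{-n})$. Since the surfaces lie in a slab where $|x_n| \le C$, the region $r \sim s$ contributes roughly $s^{n-1}\cdot s^{-n}$ per annulus. For this contribution we instead observe that the integrand is $\partial_n$ of a quantity that is $O(r^{1-n})$. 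Integrating over $\hat{\Omega}^{(j)}$ versus the boundary, this reorganizes the term into a divergence. Alternatively, the right-hand side $\text{\rm div}(\text{\rm div}(\hat\rho\Phi V)V)$ vanishes outside the compact set where $\Phi \ne 0$, since $\Phi=0$ on $E_0$. So the right-hand side is supported in a compact set independent of $j$, and no tail issue arises there.

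\textbf{Main obstacle.} For the left-hand side tails, we would try to show that they are $o(1)$ as $\bar s \to \infty$ uniformly in $j$. The plan is to compare them with the same integrals over the graph-like region, using that $\hat\Omega^{(j)}$ has density close to one by the analogue of Lemma \ref{density.is.close.to.1} for finite $j$. The aim is that the slab-linear second-derivative terms integrate, over a nearly flat sheet, to boundary terms on $W_s$ of size $s^{n-2}\cdot s^{1-n}\to0$. Establishing this uniform tail control is where we expect the real difficulty; everything else is the routine varifold convergence described above. Letting $j\to\infty$ and then $\bar s\to\infty$ yields the claimed inequality.
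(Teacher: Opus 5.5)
Your structure is the paper's: split at a fixed radius $\bar s$, pass to the limit $j\to\infty$ on the bounded part using convergence of the perimeter measures (no mass drop) and Reshetnyak continuity, and observe that the right-hand side is compactly supported because $\Phi=0$ on $E_0$. As written, however, the proof is not finished. You declare the uniform tail estimate the ``main obstacle'' and offer only an unexecuted plan: rewrite the second-derivative terms as a divergence, use density close to one for $\hat\Omega^{(j)}$, and reduce to boundary terms on $W_s$. That route would be hard to carry out. For instance, Lemma \ref{density.is.close.to.1} and the graphical description are proved for $\hat\Omega$, not uniformly for $\hat\Omega^{(j)}$ up to $W_{s_j}$. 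It is also unnecessary.

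The misstep is your claim that the dyadic sum is not obviously summable. You correctly note that every integrand is $O(|a|^2 r^{-n})$. You also note that Lemma \ref{area.growth.of.hat.Omega_j} bounds the weighted area in each annulus $\{\frac14\bar s^2<x_1^2+\cdots+x_{n-1}^2<\bar s^2\}$ by $C\bar s^{n-1}$, uniformly for $\bar s\in[s_*,s_j]$. Write $|x'|=(x_1^2+\cdots+x_{n-1}^2)^{1/2}$ and use $r\geq |x'|$. Then the annulus $\{2^{k-1}\bar s<|x'|<2^k\bar s\}$ contributes at most
\[C|a|^2\,(2^k\bar s)^{n-1}\,(2^{k-1}\bar s)^{-n}\leq C|a|^2\,2^{-k}\,\bar s^{-1}.\]
Summing this geometric series over $k\geq 1$, up to scale $s_j$, bounds the whole tail by $C|a|^2\bar s^{-1}$, with $C$ independent of $j$ and $\bar s$.

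This is exactly the paper's estimate. One adds this error term to Proposition \ref{stability.inequality.for.hat.Omega_j}, lets $j\to\infty$ with $\bar s$ fixed, and then lets $\bar s\to\infty$. Replacing your ``main obstacle'' paragraph with this computation turns your argument into the paper's proof.
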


\textbf{Proof.}
Note that $|\mathscr{L}_V g| \leq C \, |a| \, r^{1-n}$, $|\mathscr{L}_V \mathscr{L}_V g| \leq C \, |a|^2 \, r^{-n}$, $|V(\hat{\rho})| \leq C \, |a| \, r^{1-n}$, $|V(V(\hat{\rho}))| \leq C \, |a|^2 \, r^{-n}$ near infinity.

We now consider a large number $\bar{s}$. In the following, we assume that $j$ is chosen sufficiently large depending on $\bar{s}$. Using Lemma \ref{area.growth.of.hat.Omega_j}, we can bound 
\begin{align*} 
&\frac{1}{2} \int_{\partial^* \hat{\Omega}^{(j)} \cap E_{\text{\rm slab},4\lambda_0}^{(j)} \cap \{x_1^2+\hdots+x_{n-1}^2 > \bar{s}^2\}} \hat{\rho} \, \sum_{k=1}^{n-1} (\mathscr{L}_V \mathscr{L}_V g)(e_k,e_k) \, d\mathcal{H}^{n-1} \\ 
&+ \int_{\partial^* \hat{\Omega}^{(j)} \cap E_{\text{\rm slab},4\lambda_0}^{(j)} \cap \{x_1^2+\hdots+x_{n-1}^2 > \bar{s}^2\}} V(V(\hat{\rho})) \, d\mathcal{H}^{n-1} \\ 
&- \frac{1}{2} \int_{\partial^* \hat{\Omega}^{(j)} \cap E_{\text{\rm slab},4\lambda_0}^{(j)} \cap \{x_1^2+\hdots+x_{n-1}^2 > \bar{s}^2\}} \hat{\rho} \, \sum_{k,l=1}^{n-1} (\mathscr{L}_V g)(e_k,e_l) \, (\mathscr{L}_V g)(e_k,e_l) \, d\mathcal{H}^{n-1} \\ 
&+ \frac{1}{4} \int_{\partial^* \hat{\Omega}^{(j)} \cap E_{\text{\rm slab},4\lambda_0}^{(j)} \cap \{x_1^2+\hdots+x_{n-1}^2 > \bar{s}^2\}} \hat{\rho} \, \sum_{k,l=1}^{n-1} (\mathscr{L}_V g)(e_k,e_k) \, (\mathscr{L}_V g)(e_l,e_l) \, d\mathcal{H}^{n-1} \\ 
&+ \int_{\partial^* \hat{\Omega}^{(j)} \cap E_{\text{\rm slab},4\lambda_0}^{(j)} \cap \{x_1^2+\hdots+x_{n-1}^2 > \bar{s}^2\}} V(\hat{\rho}) \, \sum_{k=1}^{n-1} (\mathscr{L}_V g)(e_k,e_k) \, d\mathcal{H}^{n-1} \\ 
&\leq C \, |a|^2 \, \bar{s}^{-1}, 
\end{align*} 
where $C$ is independent of $\bar{s}$. Combining this inequality with Proposition \ref{stability.inequality.for.hat.Omega_j}, we obtain 
\begin{align*} 
&\frac{1}{2} \int_{\partial^* \hat{\Omega}^{(j)} \cap E_{\text{\rm slab},4\lambda_0}^{(j)} \setminus \{x_1^2+\hdots+x_{n-1}^2 > \bar{s}^2\}} \hat{\rho} \, \sum_{k=1}^{n-1} (\mathscr{L}_V \mathscr{L}_V g)(e_k,e_k) \, d\mathcal{H}^{n-1} \\ 
&+ \int_{\partial^* \hat{\Omega}^{(j)} \cap E_{\text{\rm slab},4\lambda_0}^{(j)} \setminus \{x_1^2+\hdots+x_{n-1}^2 > \bar{s}^2\}} V(V(\hat{\rho})) \, d\mathcal{H}^{n-1} \\ 
&- \frac{1}{2} \int_{\partial^* \hat{\Omega}^{(j)} \cap E_{\text{\rm slab},4\lambda_0}^{(j)} \setminus \{x_1^2+\hdots+x_{n-1}^2 > \bar{s}^2\}} \hat{\rho} \, \sum_{k,l=1}^{n-1} (\mathscr{L}_V g)(e_k,e_l) \, (\mathscr{L}_V g)(e_k,e_l) \, d\mathcal{H}^{n-1} \\ 
&+ \frac{1}{4} \int_{\partial^* \hat{\Omega}^{(j)} \cap E_{\text{\rm slab},4\lambda_0}^{(j)} \setminus \{x_1^2+\hdots+x_{n-1}^2 > \bar{s}^2\}} \hat{\rho} \, \sum_{k,l=1}^{n-1} (\mathscr{L}_V g)(e_k,e_k) \, (\mathscr{L}_V g)(e_l,e_l) \, d\mathcal{H}^{n-1} \\ 
&+ \int_{\partial^* \hat{\Omega}^{(j)} \cap E_{\text{\rm slab},4\lambda_0}^{(j)} \setminus \{x_1^2+\hdots+x_{n-1}^2 > \bar{s}^2\}} V(\hat{\rho}) \, \sum_{k=1}^{n-1} (\mathscr{L}_V g)(e_k,e_k) \, d\mathcal{H}^{n-1} \\ 
&\geq \int_{\hat{\Omega}^{(j)}} \text{\rm div}(\text{\rm div}(\hat{\rho} \, \Phi \, V) \, V) \, d\mathcal{H}^n - C \, |a|^2 \, \bar{s}^{-1}, 
\end{align*} 
where $C$ is independent of $\bar{s}$. In the next step, we send $j \to \infty$, keeping $\bar{s}$ fixed. It follows from Theorem 21.14 in \cite{Maggi} that the $(n-1)$-dimensional Hausdorff measure on $\partial^* \hat{\Omega}^{(j)} \cap E_{\text{\rm slab},4\lambda_0}$ converges (in the sense of weak convergence of measures) to the $(n-1)$-dimensional Hausdorff measure on $\partial^* \hat{\Omega} \cap E_{\text{\rm slab},4\lambda_0}$. In other words, there is no mass drop. Reshetnyak's continuity theorem (see e.g. \cite{Spector}) now implies that the varifold associated with $\hat{\Omega}^{(j)}$ converges weakly to the varifold associated with $\hat{\Omega}$. This implies 
\begin{align*} 
&\frac{1}{2} \int_{\partial^* \hat{\Omega} \cap E_{\text{\rm slab},4\lambda_0} \setminus \{x_1^2+\hdots+x_{n-1}^2 > \bar{s}^2\}} \hat{\rho} \, \sum_{k=1}^{n-1} (\mathscr{L}_V \mathscr{L}_V g)(e_k,e_k) \, d\mathcal{H}^{n-1} \\ 
&+ \int_{\partial^* \hat{\Omega} \cap E_{\text{\rm slab},4\lambda_0} \setminus \{x_1^2+\hdots+x_{n-1}^2 > \bar{s}^2\}} V(V(\hat{\rho})) \, d\mathcal{H}^{n-1} \\ 
&- \frac{1}{2} \int_{\partial^* \hat{\Omega} \cap E_{\text{\rm slab},4\lambda_0} \setminus \{x_1^2+\hdots+x_{n-1}^2 > \bar{s}^2\}} \hat{\rho} \, \sum_{k,l=1}^{n-1} (\mathscr{L}_V g)(e_k,e_l) \, (\mathscr{L}_V g)(e_k,e_l) \, d\mathcal{H}^{n-1} \\ 
&+ \frac{1}{4} \int_{\partial^* \hat{\Omega} \cap E_{\text{\rm slab},4\lambda_0} \setminus \{x_1^2+\hdots+x_{n-1}^2 > \bar{s}^2\}} \hat{\rho} \, \sum_{k,l=1}^{n-1} (\mathscr{L}_V g)(e_k,e_k) \, (\mathscr{L}_V g)(e_l,e_l) \, d\mathcal{H}^{n-1} \\ 
&+ \int_{\partial^* \hat{\Omega} \cap E_{\text{\rm slab},4\lambda_0} \setminus \{x_1^2+\hdots+x_{n-1}^2 > \bar{s}^2\}} V(\hat{\rho}) \, \sum_{k=1}^{n-1} (\mathscr{L}_V g)(e_k,e_k) \, d\mathcal{H}^{n-1} \\ 
&\geq \int_{\hat{\Omega}} \text{\rm div}(\text{\rm div}(\hat{\rho} \, \Phi \, V) \, V) \, d\mathcal{H}^n - C \, |a|^2 \, \bar{s}^{-1}, 
\end{align*} 
where $C$ is independent of $\bar{s}$. The assertion follows now by sending $\bar{s} \to \infty$. This completes the proof of Proposition \ref{stability.inequality.for.hat.Omega}. \\

\begin{corollary}
\label{stability.inequality.for.normal.variations}
Suppose that $a$ is a real number and $f$ is a smooth test function on $\Sigma$ with the property that $f$ vanishes near the singular set and $f = a \, \langle \frac{\partial}{\partial x_n},\nu_\Sigma \rangle$ near infinity. Then 
\begin{align*} 
&\int_\Sigma \hat{\rho} \, |\nabla^\Sigma f|^2 - \int_\Sigma \hat{\rho} \, \text{\rm Ric}(\nu_\Sigma,\nu_\Sigma) \, f^2 - \int_\Sigma \hat{\rho} \, |h_\Sigma|^2 \, f^2 \\ 
&+ \int_\Sigma \hat{\rho} \, (D^2 \log \hat{\rho})(\nu_\Sigma,\nu_\Sigma) \, f^2 - \int_\Sigma \hat{\rho} \, \langle \nabla \Phi,\nu_\Sigma \rangle \, f^2 \geq 0. 
\end{align*} 
\end{corollary}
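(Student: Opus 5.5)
The plan is to apply Proposition \ref{stability.inequality.for.hat.Omega} to a vector field $V$ that generates a normal variation of $\Sigma$ with normal speed $f$. Then I reduce the resulting second-variation expression to the standard weighted stability form by the usual computation.

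\textbf{Construction of $V$.} Since $f$ vanishes near the singular set $\mathcal{S}$, and $\mathcal{S}$ is compact by Proposition \ref{regularity.near.infinity}, the support of $f$ stays away from $\mathcal{S}$. I extend $f\,\nu_\Sigma$ to a smooth vector field $V$ on $E$ as follows.
\begin{itemize}
\item Near the support of $f$ in the compact region, I use a tubular neighborhood of the smooth part $\Sigma$ and extend the normal field so that its integral curves are normal geodesics. This makes $\nabla_V V = 0$ along $\Sigma$.
\item Near infinity, I take $V = a\,\frac{\partial}{\partial x_n}$. Its normal component along $\Sigma$ is exactly $a\langle \frac{\partial}{\partial x_n},\nu_\Sigma\rangle = f$.
\item In a transition annulus I glue the two with a cutoff in $x_1^2+\hdots+x_{n-1}^2$. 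The choice is arranged so that the normal component equals $f$ everywhere on $\Sigma$.
\item Near $\mathcal{S}$, and away from $\partial^*\hat\Omega$, I set $V=0$.
\end{itemize}
Since $\partial^*\hat\Omega \cap E_{\text{\rm slab},4\lambda_0}$ equals $\Sigma$ up to the $\mathcal{H}^{n-1}$-null set $\mathcal{S}$, every boundary integral in Proposition \ref{stability.inequality.for.hat.Omega} becomes an integral over $\Sigma$.

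\textbf{Identifying the left side.} I compare the left-hand side with the second derivative of the weighted area $\int_{\Sigma_t}\hat\rho$ under the flow of $V$. The right-hand side I write as the second derivative of $\int_{\Omega_t}\hat\rho\,\Phi$: the first variation of the volume term is $\int_\Omega \text{\rm div}(\hat\rho\,\Phi\,V)$, and the second is $\int_\Omega \text{\rm div}(\text{\rm div}(\hat\rho\,\Phi\,V)\,V)$.

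Next I decompose $V = f\nu_\Sigma + V^T$ along $\Sigma$ and use the Euler–Lagrange equation $H_\Sigma + \langle\nabla\log\hat\rho,\nu_\Sigma\rangle = \Phi$. With this, the tangential contributions and the acceleration term $\nabla_V V$ combine into divergence terms on $\Sigma$, plus terms proportional to the first variation, which cancel. The remaining normal part produces the standard expression
\[\hat\rho\big(|\nabla^\Sigma f|^2 - (\text{\rm Ric}(\nu,\nu)+|h|^2)f^2 + (D^2\log\hat\rho)(\nu,\nu)f^2 - \langle\nabla\Phi,\nu\rangle f^2\big).\]
Here the $(D^2\log\hat\rho)(\nu,\nu)$ term comes from $\frac{d}{dt}\langle\nabla\log\hat\rho,\nu\rangle$, and the $\Phi$-terms from the volume part combine with $H_\Sigma\Phi$ to leave $-\langle\nabla\Phi,\nu\rangle f^2$.

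\textbf{Main obstacle.} The hard step is controlling the divergence (boundary) terms produced when integrating by parts on the noncompact $\Sigma$. Near $\mathcal{S}$ they vanish because $V=0$ there. Near infinity, $V = a\,\partial_{x_n}$ is nearly Killing, since $|\mathscr{L}_V g| = O(r^{1-n})$, and $\Phi=0$ on $E_0$. I would check that the flux terms over large cylinders $W_s\cap\Sigma$ are $O(|a|^2 s^{n-2}\cdot s^{1-n}) \to 0$, using Corollary \ref{graphical} and the area growth of $\Sigma$. To avoid a delicate pointwise identity, I would first prove the inequality for $f$ compactly supported, where the computation is exact. I would then pass to the asymptotically translational case by letting the cutoff scale tend to infinity, with the errors bounded via the decay estimates of Proposition \ref{regularity.near.infinity}.
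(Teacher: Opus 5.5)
Your skeleton is the paper's. You pick $V$ with $\langle V,\nu_\Sigma\rangle=f$, $V=0$ near $\mathcal{S}$ and $V=a\,\frac{\partial}{\partial x_n}$ near infinity, apply Proposition \ref{stability.inequality.for.hat.Omega}, and rewrite the second variation as the normal quadratic form plus $\Sigma$-divergences using $H_\Sigma+\langle\nabla\log\hat\rho,\nu_\Sigma\rangle=\Phi$. The paper makes this rewriting exact through the pointwise identity of Proposition \ref{divergence.formula.for.second.variation} (with $W=D_VV$ and the tangential field $Z$), integrated over $\Sigma$.

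\textbf{The cutoff route fails.} You propose to avoid that identity by proving the inequality for compactly supported $f$ and then letting a cutoff scale tend to infinity. Write $|x'|^2=x_1^2+\hdots+x_{n-1}^2$. Since $\Sigma$ is asymptotically planar of dimension $n-1\geq 3$, any $\chi_R$ equal to $1$ on $\{|x'|\leq R\}$ and vanishing outside $\{|x'|\leq 2R\}$ has $\int_\Sigma|\nabla^\Sigma\chi_R|^2\geq c\,R^{n-3}$. Because $f\to\pm a\neq 0$, the difference $\int_\Sigma\hat\rho\,|\nabla^\Sigma(\chi_Rf)|^2-\int_\Sigma\hat\rho\,|\nabla^\Sigma f|^2$ is of order $a^2R^{n-3}$. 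This is not an error controlled by decay.

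In fact compactly supported stability does not imply the statement at all. On $\mathbb{R}^m$ with $m\geq 3$, take a bump $\phi\geq 0$ and small $\varepsilon$. The form $\int|\nabla f|^2-\varepsilon\int\phi\,f^2$ is positive on $C_c^\infty$. It is negative on the positive solution of $\Delta u=-\varepsilon\phi\,u$ with $u\to 1$, since $u=1+c\,r^{2-m}+\hdots$ with $c>0$.

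The inequality for asymptotically translational $f$ is extra information. It comes from $\frac{\partial}{\partial x_n}$ being tangent to the cylinders $W_{s_j}$, and is encoded only in Proposition \ref{stability.inequality.for.hat.Omega}. It is exactly what Corollary \ref{quadratic.form.with.respect.to.hat.g} later needs for test functions constant at infinity.

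Cutting off $V$ instead does not help cheaply. The term $\frac12\hat\rho\sum_k(\mathscr{L}_{\chi_RV}\mathscr{L}_{\chi_RV}g)(e_k,e_k)$ contains $\hat\rho\,|V|^2\,|\nabla^\Sigma\chi_R|^2$, so both sides carry divergent $a^2R^{n-3}$ terms. These must be matched exactly, which is the same bookkeeping as the pointwise identity. You must apply Proposition \ref{stability.inequality.for.hat.Omega} to the noncompact $V$ itself and show that the fluxes at infinity vanish.

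\textbf{The decay you cite is too weak.} Your flux bound $O(|a|^2s^{n-2}\cdot s^{1-n})$ assumes every field under a $\text{div}_\Sigma$ decays like $r^{1-n}$. This holds for $W^{\text{\rm tan}}$, because $W=a^2\,D_{\frac{\partial}{\partial x_n}}\frac{\partial}{\partial x_n}$ near infinity. It fails for $Z$ if you only use Corollary \ref{graphical} and Proposition \ref{regularity.near.infinity}. Those give only $|V^{\text{\rm tan}}|\leq C|a|\,r^{-1}$ and $|h_\Sigma|\leq C\,r^{-2}$, hence $|Z|\leq C|a|^2r^{-3}$ and a flux bound $O(s^{n-5})$, which gives nothing for $n\geq 5$.

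The paper instead uses the expansion of Proposition \ref{asymptotic.expansion.of.u}. This yields $|h_\Sigma|\leq C\,r^{1-n}$, $|V^{\text{\rm tan}}|\leq C|a|\,r^{2-n}$ and $|Z|\leq C|a|^2r^{3-2n}$. Together with $\Phi=0$ near infinity and $V=0$ near $\mathcal{S}$, all divergence terms then integrate to zero. The integrated identity combined with Proposition \ref{stability.inequality.for.hat.Omega} gives the corollary.
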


\textbf{Proof.} 
We can find a smooth vector field $V$ on $E$ such that $\langle V,\nu_\Sigma \rangle = f$ at each point on $\Sigma$, $V=0$ in a neighborhood of the singular set, and $V = a \, \frac{\partial}{\partial x_n}$ near infinity. We define a vector field $W$ on $E$ by $W = D_V V$. Since $V = a \, \frac{\partial}{\partial x_n}$ near infinity, it follows that $W = a^2 \, D_{\frac{\partial}{\partial x_n}} \frac{\partial}{\partial x_n}$ near infinity. This implies $|W| \leq C \, |a|^2 \, r^{1-n}$ near infinity. 

In the next step, we define a tangential vector field $Z$ along $\Sigma$ by 
\[Z = D_{V^{\text{\rm tan}}}^\Sigma (V^{\text{\rm tan}}) - \text{\rm div}_\Sigma(V^{\text{\rm tan}}) \, V^{\text{\rm tan}} + 2 \sum_{k=1}^{n-1} h_\Sigma(V^{\text{\rm tan}},e_k) \, \langle V,\nu_\Sigma \rangle \, e_k.\] 
Using Proposition \ref{asymptotic.expansion.of.u}, we obtain $|h_\Sigma| \leq C \, r^{1-n}$, $|V^{\text{\rm tan}}| \leq C \, |a| \, r^{2-n}$, and $|Z| \leq C \, |a|^2 \, r^{3-2n}$ near infinity. Proposition \ref{divergence.formula.for.second.variation} gives 
\begin{align*} 
&\hat{\rho} \, |\nabla^\Sigma f|^2 - \hat{\rho} \, (\text{\rm Ric}(\nu_\Sigma,\nu_\Sigma) + |h_\Sigma|^2) \, f^2 + (D^2 \hat{\rho})(\nu_\Sigma,\nu_\Sigma) \, f^2 - \hat{\rho}^{-1} \, \langle \nabla \hat{\rho},\nu_\Sigma \rangle^2 \, f^2 \\ 
&+ \text{\rm div}_\Sigma(\hat{\rho} \, W^{\text{\rm tan}}) - \text{\rm div}_\Sigma(\hat{\rho} \, Z) + \text{\rm div}_\Sigma(\langle V^{\text{\rm tan}},\nabla^\Sigma \hat{\rho} \rangle \, V^{\text{\rm tan}}) \\ 
&- \hat{\rho} \, \langle \nabla \Phi,\nu_\Sigma \rangle \, f^2 + \text{\rm div}(\hat{\rho} \, \Phi \, V) \, \langle V,\nu_\Sigma \rangle + \text{\rm div}_\Sigma(\hat{\rho} \, \Phi \, \langle V,\nu_\Sigma \rangle \, V^{\text{\rm tan}}) \\ 
&= \frac{1}{2} \, \hat{\rho} \sum_{k=1}^{n-1} (\mathscr{L}_V \mathscr{L}_V g)(e_k,e_k) + V(V(\hat{\rho})) \\ 
&- \frac{1}{2} \, \hat{\rho} \sum_{k,l=1}^{n-1} (\mathscr{L}_V g)(e_k,e_l) \, (\mathscr{L}_V g)(e_k,e_l) \\ 
&+ \frac{1}{4} \, \hat{\rho} \sum_{k,l=1}^{n-1} (\mathscr{L}_V g)(e_k,e_k) \, (\mathscr{L}_V g)(e_l,e_l) \\ 
&+ V(\hat{\rho}) \sum_{k=1}^{n-1} (\mathscr{L}_V g)(e_k,e_k) 
\end{align*} 
at each point on $\Sigma$. Integrating this identity over $\Sigma$ gives 
\begin{align*} 
&\int_\Sigma \hat{\rho} \, |\nabla^\Sigma f|^2 - \int_\Sigma \hat{\rho} \, (\text{\rm Ric}(\nu_\Sigma,\nu_\Sigma) + |h_\Sigma|^2) \, f^2 \\ 
&+ \int_\Sigma (D^2 \hat{\rho})(\nu_\Sigma,\nu_\Sigma) \, f^2 - \int_\Sigma \hat{\rho}^{-1} \, \langle \nabla \hat{\rho},\nu_\Sigma \rangle^2 \, f^2 \\ 
&- \int_\Sigma \hat{\rho} \, \langle \nabla \Phi,\nu_\Sigma \rangle \, f^2 + \int_\Sigma \text{\rm div}(\hat{\rho} \, \Phi \, V) \, \langle V,\nu_\Sigma \rangle \\ 
&= \frac{1}{2} \int_\Sigma \hat{\rho} \sum_{k=1}^{n-1} (\mathscr{L}_V \mathscr{L}_V g)(e_k,e_k) + \int_\Sigma V(V(\hat{\rho})) \\ 
&- \frac{1}{2} \int_\Sigma \hat{\rho} \sum_{k,l=1}^{n-1} (\mathscr{L}_V g)(e_k,e_l) \, (\mathscr{L}_V g)(e_k,e_l) \\ 
&+ \frac{1}{4} \int_\Sigma \hat{\rho} \sum_{k,l=1}^{n-1} (\mathscr{L}_V g)(e_k,e_k) \, (\mathscr{L}_V g)(e_l,e_l) \\ 
&+ \int_\Sigma V(\hat{\rho}) \sum_{k=1}^{n-1} (\mathscr{L}_V g)(e_k,e_k). 
\end{align*} 
The assertion follows now from Proposition \ref{stability.inequality.for.hat.Omega}. This completes the proof of Corollary \ref{stability.inequality.for.normal.variations}. \\

\begin{remark}
From now on, we will no longer work with the domain $\hat{\Omega}$, but we will work exclusively with the hypersurface $\Sigma$. The hypersurface $\Sigma$ may be disconnected. If $\Sigma$ is disconnected, then we focus on the connected component of $\Sigma$ that contains the asymptotically planar end. We discard all the other connected components of $\Sigma$.
\end{remark}

\subsection{A generalized Schoen-Yau identity} In this subsection, we prove a generalized Schoen-Yau identity on $\Sigma$.

\begin{proposition}
\label{generalized.Schoen.Yau.identity}
We have 
\begin{align*} 
&\text{\rm Ric}(\nu_\Sigma,\nu_\Sigma) +  |h_\Sigma|^2 - (D^2 \log \hat{\rho})(\nu_\Sigma,\nu_\Sigma) + \langle \nabla \Phi,\nu_\Sigma \rangle \\ 
&+ \frac{1}{2} \, \Big ( R_\Sigma - 2 \, \Delta_\Sigma \log \hat{\rho} - \frac{n}{n+1} \, |\nabla^\Sigma \log \hat{\rho}|^2 \Big ) \geq \hat{Q}
\end{align*} 
at each point on $\Sigma$. Here, $\hat{Q}$ is defined as in Lemma \ref{Phi}.
\end{proposition}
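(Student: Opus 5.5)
The plan is to prove this by a purely pointwise computation. It combines the Gauss equation, the decomposition of the ambient Laplacian along $\Sigma$, the PDE (\ref{pde.for.log.hat.v}) for $\log \hat{v}$, the $\mu$-bubble equation $H_\Sigma + \langle \nabla \log \hat{\rho},\nu_\Sigma \rangle = \Phi$, and the last property of Lemma \ref{Phi}. The argument then reduces to two elementary completions of squares.

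\textbf{Step 1: reduce to ambient quantities.} Write $u = \log \hat{\rho} = \log \rho + \log \hat{v}$, $a = d\log \rho$, $b = d\log \hat{v}$, $c = a+b = du$, $p = \langle \nabla u,\nu_\Sigma \rangle$ and $H = H_\Sigma$, so that $\Phi = H + p$.
\begin{itemize}
\item By the Gauss equation, $\text{Ric}(\nu_\Sigma,\nu_\Sigma) + \frac{1}{2} R_\Sigma = \frac{1}{2} R - \frac{1}{2}|h_\Sigma|^2 + \frac{1}{2} H^2$.
\item Along $\Sigma$ we have $\Delta u = \Delta_\Sigma u + (D^2 u)(\nu_\Sigma,\nu_\Sigma) + H p$.
\end{itemize}
Hence the left-hand side of the claim equals
\[\frac{1}{2} R - \Delta u + \frac{1}{2}|h_\Sigma|^2 + \frac{1}{2}H^2 + Hp + \langle \nabla \Phi,\nu_\Sigma \rangle - \frac{n}{2(n+1)} |\nabla^\Sigma u|^2.\]
Next, substitute (\ref{pde.for.log.hat.v}) in the form $\Delta \log \hat{v} = -|b|^2 - \langle a,b \rangle + \frac{1}{2}(R - 2\Delta \log \rho - \frac{n+1}{n+2}|a|^2 - Q)$. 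This gives
\[\frac{1}{2} R - \Delta u = |b|^2 + \langle a,b \rangle + \frac{n+1}{2(n+2)} |a|^2 + \frac{1}{2} Q.\]

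\textbf{Step 2: handle the $\Phi$ terms.} By Lemma \ref{Phi}, $\frac{1}{2} Q \geq \hat{Q} - \frac{1}{4}\Phi^2 + |d\Phi|$. Also $\langle \nabla \Phi,\nu_\Sigma \rangle \geq -|d\Phi|$, so the gradient terms cancel. Use $|h_\Sigma|^2 \geq \frac{1}{n-1} H^2$ and $\Phi = H + p$. The terms involving $H$ are then bounded below by
\[\Big(\frac{n}{2(n-1)} - \frac{1}{4}\Big) H^2 + \frac{1}{2} Hp - \frac{1}{4} p^2.\]
Minimizing this quadratic in $H$ (its leading coefficient is $\frac{n+1}{4(n-1)} > 0$) gives the lower bound $-\frac{n}{2(n+1)} p^2$. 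Since $|\nabla^\Sigma u|^2 + p^2 = |c|^2$, the claim reduces to the vector inequality
\[|b|^2 + \langle a,b \rangle + \frac{n+1}{2(n+2)} |a|^2 \geq \frac{n}{2(n+1)} |a+b|^2.\]

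\textbf{Step 3: the algebraic inequality.} Substitute $b = c - a$; the left side becomes $|c|^2 - \langle a,c \rangle + \frac{n+1}{2(n+2)}|a|^2$. Minimizing over $a$ gives
\[|c|^2 \Big(1 - \frac{n+2}{2(n+1)}\Big) = \frac{n}{2(n+1)}|c|^2,\]
which is exactly what is needed. This explains the constants $\frac{n+1}{n+2}$ and $\frac{n}{n+1}$ in the definition of a dataset.

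The main obstacle is bookkeeping rather than any analytic difficulty. One has to split $|du|^2$ correctly into its tangential and normal parts, so that the normal part $p^2$ produced in Step 2 combines with the tangential weight to give the full $|c|^2$. One also has to check that the constant $\frac{1}{4}$ in front of $\Phi^2$ from Lemma \ref{Phi} is small enough for the minimization in $H$ to close; it is, but with no room to spare.
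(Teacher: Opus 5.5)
Your proposal is correct and is essentially the paper's own proof. It uses the same Gauss-equation and Laplacian-splitting reduction and the same substitution of (\ref{pde.for.log.hat.v}), and your minimizations over $H_\Sigma$ and over $d\log\rho$ are exactly the paper's completed squares $\frac{n+1}{4(n-1)}\bigl(H_\Sigma + \frac{n-1}{n+1}\langle \nabla \log \hat{\rho},\nu_\Sigma \rangle\bigr)^2$ and $\frac{n+2}{2(n+1)}\bigl|\nabla \log \hat{v} + \frac{1}{n+2}\nabla \log \rho\bigr|^2$; the only cosmetic difference is that you apply Lemma \ref{Phi} before completing the square in $H_\Sigma$, whereas the paper keeps $\frac{1}{4}\Phi^2$ explicit and applies the lemma at the end.
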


\textbf{Proof.}
The hypersurface $\Sigma$ satisfies 
\begin{equation} 
\label{first.variation.for.weighted.mu.bubble}
H_\Sigma + \langle \nabla \log \hat{\rho},\nu_\Sigma \rangle = \Phi. 
\end{equation}
The Gauss equations imply that 
\begin{equation} 
\label{Gauss.equations}
\frac{1}{2} \, R - \text{\rm Ric}(\nu_\Sigma,\nu_\Sigma) - \frac{1}{2} \, R_\Sigma + \frac{1}{2} \, H_\Sigma^2 - \frac{1}{2} \, |h_\Sigma|^2 = 0 
\end{equation}
at each point on $\Sigma$. Moreover, 
\begin{equation} 
\label{Laplacian.of.log.hat.rho} 
\Delta_\Sigma \log \hat{\rho} = \Delta \log \hat{\rho} - (D^2 \log \hat{\rho})(\nu_\Sigma,\nu_\Sigma) - H_\Sigma \, \langle \nabla \log \hat{\rho},\nu_\Sigma \rangle 
\end{equation} 
and 
\begin{equation} 
\label{gradient.of.log.hat.rho} 
|\nabla^\Sigma \log \hat{\rho}|^2 = |\nabla \log \hat{\rho}|^2 - \langle \nabla \log \hat{\rho},\nu_\Sigma \rangle^2. 
\end{equation} 
This gives 
\begin{align*} 
&\text{\rm Ric}(\nu_\Sigma,\nu_\Sigma) +  |h_\Sigma|^2 - (D^2 \log \hat{\rho})(\nu_\Sigma,\nu_\Sigma) + \langle \nabla \Phi,\nu_\Sigma \rangle \\ 
&+ \frac{1}{2} \, \Big ( R_\Sigma - 2 \, \Delta_\Sigma \log \hat{\rho} - \frac{n}{n+1} \, |\nabla^\Sigma \log \hat{\rho}|^2 \Big ) \\ 
&= \frac{1}{2} \, R + \frac{1}{2} \, H_\Sigma^2 + \frac{1}{2} \, |h_\Sigma|^2 + \langle \nabla \Phi,\nu_\Sigma \rangle \\ 
&- \Delta_\Sigma \log \hat{\rho} - (D^2 \log \hat{\rho})(\nu_\Sigma,\nu_\Sigma) - \frac{n}{2(n+1)} \, |\nabla^\Sigma \log \hat{\rho}|^2 \\ 
&= \frac{1}{2} \, R + \frac{1}{2} \, H_\Sigma^2 + \frac{1}{2} \, |h_\Sigma|^2 + \langle \nabla \Phi,\nu_\Sigma \rangle \\ 
&- \Delta \log \hat{\rho} + H_\Sigma \, \langle \nabla \log \hat{\rho},\nu_\Sigma \rangle \\ 
&- \frac{n}{2(n+1)} \, |\nabla \log \hat{\rho}|^2 + \frac{n}{2(n+1)} \, \langle \nabla \log \hat{\rho},\nu_\Sigma \rangle^2 \\ 
&= \frac{1}{2} \, R + \frac{1}{4} \, \Phi^2 + \frac{1}{2} \, \Big ( |h_\Sigma|^2 - \frac{1}{n-1} \, H_\Sigma^2 \Big ) + \langle \nabla \Phi,\nu_\Sigma \rangle \\ 
&- (\Delta \log \rho + \Delta \log \hat{v}) - \frac{n}{2(n+1)} \, |\nabla \log \rho + \nabla \log \hat{v}|^2 \\ 
&+ \frac{n+1}{4(n-1)} \, \Big ( H_\Sigma + \frac{n-1}{n+1} \, \langle \nabla \log \hat{\rho},\nu_\Sigma \rangle \Big )^2 \\ 
&= \frac{1}{2} \, Q + \frac{1}{4} \, \Phi^2 + \frac{1}{2} \, \Big ( |h_\Sigma|^2 - \frac{1}{n-1} \, H_\Sigma^2 \Big ) + \langle \nabla \Phi,\nu_\Sigma \rangle \\ 
&+ \frac{n+2}{2(n+1)} \, \Big | \nabla \log \hat{v} + \frac{1}{n+2} \, \nabla \log \rho \Big |^2 \\ 
&+ \frac{n+1}{4(n-1)} \, \Big ( H_\Sigma + \frac{n-1}{n+1} \, \langle \nabla \log \hat{\rho},\nu_\Sigma \rangle \Big )^2 \\ 
&\geq \frac{1}{2} \, Q + \frac{1}{4} \, \Phi^2 - |\nabla \Phi| 
\end{align*} 
at each point on $\Sigma$. The first equality follows from (\ref{Gauss.equations}). The second equality follows from (\ref{Laplacian.of.log.hat.rho}) and (\ref{gradient.of.log.hat.rho}). The third equality follows from (\ref{first.variation.for.weighted.mu.bubble}) and the identity $\hat{\rho} = \rho \, \hat{v}$. The fourth equality follows from (\ref{pde.for.log.hat.v}). 

Finally, $Q + \frac{1}{2} \, \Phi^2 - 2 \, |\nabla \Phi| \geq 2 \, \hat{Q}$ by Lemma \ref{Phi}. This completes the proof of Proposition \ref{generalized.Schoen.Yau.identity}. \\

\begin{corollary}
\label{quadratic.form.with.respect.to.hat.g}
Let $f$ be a smooth test function on $\Sigma$ with the property that $f$ vanishes near the singular set and $f$ is constant near infinity. Then 
\begin{align*} 
&\int_\Sigma \hat{\rho} \, |\nabla^\Sigma f|^2 + \frac{1}{2} \int_\Sigma \hat{\rho} \, \Big ( R_\Sigma - 2 \, \Delta_\Sigma \log \hat{\rho} - \frac{n}{n+1} \, |\nabla^\Sigma \log \hat{\rho}|^2 \Big ) \, f^2 \\ 
&\geq \int_\Sigma \hat{\rho} \, \hat{Q} \, f^2. 
\end{align*} 
\end{corollary}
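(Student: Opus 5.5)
We combine the stability inequality of Corollary \ref{stability.inequality.for.normal.variations} with the pointwise inequality of Proposition \ref{generalized.Schoen.Yau.identity}. The only subtlety is that the stability inequality is stated for test functions equal to $a \, \langle \frac{\partial}{\partial x_n},\nu_\Sigma \rangle$ near infinity, while the corollary allows $f$ to be constant near infinity.

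\textbf{Step 1: the constant-at-infinity case.} Let $f$ vanish near $\mathcal{S}$ and equal $a$ near infinity. Define $\tilde{f} = f$ away from infinity and $\tilde{f} = a \, \langle \frac{\partial}{\partial x_n},\nu_\Sigma \rangle$ near infinity. More precisely, set $\tilde{f} = f \, \big ( 1 - \zeta + \zeta \, \langle \frac{\partial}{\partial x_n},\nu_\Sigma \rangle \big )$, where $\zeta$ is a cutoff equal to $1$ far out. Corollary \ref{stability.inequality.for.normal.variations} applies to $\tilde{f}$. Adding Proposition \ref{generalized.Schoen.Yau.identity}, multiplied by $\hat{\rho} \, \tilde{f}^2$ and integrated over $\Sigma$, gives
\[\int_\Sigma \hat{\rho} \, |\nabla^\Sigma \tilde{f}|^2 + \frac{1}{2} \int_\Sigma \hat{\rho} \, \Big ( R_\Sigma - 2 \, \Delta_\Sigma \log \hat{\rho} - \frac{n}{n+1} \, |\nabla^\Sigma \log \hat{\rho}|^2 \Big ) \, \tilde{f}^2 \geq \int_\Sigma \hat{\rho} \, \hat{Q} \, \tilde{f}^2.\]
In this step the terms $\text{Ric}(\nu_\Sigma,\nu_\Sigma)$, $|h_\Sigma|^2$, $(D^2\log\hat{\rho})(\nu_\Sigma,\nu_\Sigma)$ and $\langle \nabla\Phi,\nu_\Sigma\rangle$ cancel exactly. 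Every integral is finite, since $\tilde{f}$ is bounded and, by Proposition \ref{asymptotic.expansion.of.u} and the decay of $g$ and $\hat{\rho}$, the integrands decay like $r^{-n-\epsilon}$ on an $(n-1)$-dimensional end.

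\textbf{Step 2: pass from $\tilde{f}$ to $f$.} Near infinity we have $\langle \frac{\partial}{\partial x_n},\nu_\Sigma \rangle = 1 + O(r^{4-2n})$, because $|\nabla u| = O(r^{2-n})$ by Proposition \ref{asymptotic.expansion.of.u}, and the metric correction is $O(r^{2-n})$. Hence $\tilde{f} - f = O(|a| \, r^{2-n})$ and $|\nabla^\Sigma(\tilde{f}-f)| = O(|a| \, r^{1-n})$ on the end. The zero-order coefficients $R_\Sigma$, $\Delta_\Sigma \log\hat{\rho}$, $|\nabla^\Sigma\log\hat{\rho}|^2$ and $\hat{Q}$ are $O(r^{-n})$ there. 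So the two sides change by integrals of order
\[\int^\infty \big ( r^{2-2n} + r^{-n} \, r^{2-n} \big ) \, r^{n-2} \, dr < \infty.\]
These errors are finite but not small, so this does not yet give the inequality for $f$.

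To remove them, I would instead apply Step 1 to the cutoff function itself in a scaling family. Choose $\zeta_R$ supported in $\{r > R\}$, equal to $1$ on $\{r > 2R\}$, with $|\nabla \zeta_R| \leq C/R$, and form $\tilde{f}_R$ from $\zeta_R$. Then $\tilde{f}_R - f$ is supported in $\{r > R\}$. The difference in the quadratic forms is bounded by
\[\int_{\{r>R\}} \big ( |a|^2 r^{2-2n} + |a|^2 r^{-2} \, r^{4-2n} \, 1_{\{R<r<2R\}} + |a|^2 r^{2-2n} \big ) \, r^{n-2} \, dr \to 0 \quad \text{as } R \to \infty.\]
Here $|\nabla^\Sigma \tilde{f}_R| \leq |\nabla\zeta_R| \cdot O(r^{2-n}) + O(r^{1-n})$, and both pieces give $o(1)$ after integration since $n \geq 4$. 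Similarly, $\int \hat{\rho} \, \hat{Q} \, (\tilde{f}_R^2 - f^2) \to 0$. Letting $R \to \infty$ yields the corollary.

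\textbf{Main obstacle.} The delicate point is justifying that Corollary \ref{stability.inequality.for.normal.variations} and the integration of Proposition \ref{generalized.Schoen.Yau.identity} may be combined termwise. Each individual integrand must be integrable, not only the combination. The convergence of $\int_\Sigma \hat{\rho} \, \Delta_\Sigma\log\hat{\rho} \, f^2$ near infinity is the one needing the decay $|\nabla \log\hat{\rho}| = O(r^{1-n})$ and the planarity estimates of Proposition \ref{asymptotic.expansion.of.u}.
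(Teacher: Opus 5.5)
Your proposal is correct and follows the paper's route: the paper also extends Corollary~\ref{stability.inequality.for.normal.variations} by a ``standard approximation argument'' from test functions equal to $a \, \langle \frac{\partial}{\partial x_n},\nu_\Sigma \rangle$ near infinity to test functions that are constant near infinity, and then inserts the pointwise inequality of Proposition~\ref{generalized.Schoen.Yau.identity}. You carry out that approximation explicitly with the cutoffs $\zeta_R$, and you approximate after combining with Proposition~\ref{generalized.Schoen.Yau.identity} rather than before, which is immaterial. One slip: because of the conformal factor $1+\alpha \, r^{2-n}$ in $g$, you have $\langle \frac{\partial}{\partial x_n},\nu_\Sigma \rangle = 1+O(r^{2-n})$, not $1+O(r^{4-2n})$; however, your estimates only use the bound $O(r^{2-n})$, so the argument is unaffected.
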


\textbf{Proof.}
Using Proposition \ref{stability.inequality.for.normal.variations} and a standard approximation argument, we obtain 
\begin{align*} 
&\int_\Sigma \hat{\rho} \, |\nabla^\Sigma f|^2 - \int_\Sigma \hat{\rho} \, \text{\rm Ric}(\nu_\Sigma,\nu_\Sigma) \, f^2 - \int_\Sigma \hat{\rho} \, |h_\Sigma|^2 \, f^2 \\ 
&+ \int_\Sigma \hat{\rho} \, (D^2 \log \hat{\rho})(\nu_\Sigma,\nu_\Sigma) \, f^2 - \int_\Sigma \hat{\rho} \, \langle \nabla \Phi,\nu_\Sigma \rangle \, f^2 \geq 0. 
\end{align*} 
The assertion follows now from Proposition \ref{generalized.Schoen.Yau.identity}. This completes the proof of Corollary \ref{quadratic.form.with.respect.to.hat.g}. \\

\begin{corollary}
\label{singular.set.empty}
If $\mathcal{S} = \emptyset$, then $(\Sigma,\hat{g},\hat{\rho},\hat{Q})$ is an $(n-1)$-dataset and its mass (in the sense of Definition \ref{mass.of.dataset}) is equal to $0$.
\end{corollary}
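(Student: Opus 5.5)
The plan is to check the items of Definition \ref{dataset} one at a time for $(\Sigma,\hat{g},\hat{\rho}|_\Sigma,\hat{Q}|_\Sigma)$ in dimension $n-1 \geq 3$, and then to read off the coefficients of the asymptotic expansions.

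\emph{Completeness and ends.} When $\mathcal{S} = \emptyset$, Proposition \ref{existence.of.minimizer} places $\Sigma$ inside the closure of $E_{\text{\rm slab},2\lambda_0} \setminus \mathcal{N}_{(M,g)}(\partial E,2\eta_0)$, so $\Sigma$ stays a definite distance away from $\partial E$. The hypersurface $\Sigma$ is therefore a properly embedded smooth hypersurface of $E$. Recall that $E \setminus E_0$ is bounded. Near infinity, $\Sigma$ is the graph $x_n = u(x_1,\hdots,x_{n-1})$ of Corollary \ref{graphical}, and outside this graphical region $\Sigma$ lies in a bounded subset of $(M,g)$. Consequently $(\Sigma,\hat{g})$ is complete and has exactly one end, which is the graphical one. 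In particular, the extra requirement in the case $n-1=3$ (no other ends) is satisfied automatically.

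\emph{Asymptotic flatness and decay.} I will use the coordinates $y = (x_1,\hdots,x_{n-1})$ on the graphical end. By Proposition \ref{asymptotic.expansion.of.u}, $u = c_0 + \mu_0 \, |y|^{3-n} + O(|y|^{3-n-\hat{\delta}})$, and there are matching derivative bounds. Hence $du \otimes du = O(|y|^{4-2n})$, and $r^2 = |y|^2 + u^2$ gives $r^{2-n} = |y|^{2-n} + O(|y|^{-n})$. Pulling back $g = (1+\alpha\, r^{2-n})\,\bar{g} + O(r^{2-n-2\hat{\delta}})$, I obtain
\[\hat{g} = \bar{g}_{\mathbb{R}^{n-1}} + O(|y|^{2-n}),\]
with the corresponding estimates for all derivatives. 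The required decay in dimension $n-1$ has the form $|y|^{3-n-2\delta'}$. Taking $\delta' = \min\{\frac{1}{2},\hat{\delta}\}$, the term $\alpha\,|y|^{2-n}$ and the term $du \otimes du$ (since $n \geq 4$) both fall into the error. So the coefficient of $|y|^{3-n}$ in $\hat{g}$ is $0$.

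The same reasoning applies to $\hat{\rho}$. Restricting $\hat{\rho} = 1 + \hat{\beta}\, r^{2-n} + O(r^{2-n-2\hat{\delta}})$ to $\Sigma$ gives $1 + O(|y|^{2-n})$, so its coefficient of $|y|^{3-n}$ is also $0$. On $E_0$ we have $\hat{Q} = \frac{1}{2} Q = O(r^{-n-2\delta})$, which is $O(|y|^{-(n-1)-2\delta'})$, and the analogous statement holds for derivatives. Derivatives along $\Sigma$ are controlled by the derivative bounds on $u$ from Corollary \ref{graphical} together with the ambient derivative bounds. Positivity and smoothness of $\hat{\rho}$ and $\hat{Q}$ come from Lemma \ref{Phi} and the construction of $\hat{v}$.

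\emph{The quadratic inequality and the mass.} Dimension $n-1$ requires the coefficient $\frac{(n-1)+1}{(n-1)+2} = \frac{n}{n+1}$, which is exactly the coefficient appearing in Corollary \ref{quadratic.form.with.respect.to.hat.g}. The admissible test functions also match. Since $\mathcal{S} = \emptyset$, the condition ``vanishes near the singular set'' is vacuous. Because $\Sigma$ has only the one end, a function with $\{f \neq a\} \cap E_0$ bounded is equal to $a$ outside a compact set, so it is constant near infinity. Corollary \ref{quadratic.form.with.respect.to.hat.g} then gives the inequality with right-hand side $\int_\Sigma \hat{\rho}\,\hat{Q}\,f^2$. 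With both coefficients equal to zero, the mass is $(n-2)\cdot 0 + 2 \cdot 0 = 0$.

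\emph{Main obstacle.} The step I expect to need the most care is the bookkeeping of decay rates, including all higher derivatives. One must confirm that every correction term, namely $\alpha\,|y|^{2-n}$, $du \otimes du$, the shift of $r$ relative to $|y|$, and the $\hat{\beta}$ term, decays strictly faster than $|y|^{3-n}$ with a uniform $\delta'>0$. This relies on $n \geq 4$ and on the full asymptotic expansion of $u$ from Proposition \ref{asymptotic.expansion.of.u}.
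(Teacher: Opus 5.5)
Your proposal is correct and takes the same route as the paper. Completeness of $(\Sigma,\hat{g})$ follows from $\mathcal{S}=\emptyset$, and the dataset inequality comes directly from Corollary~\ref{quadratic.form.with.respect.to.hat.g}, whose coefficient $\frac{n}{n+1}$ is exactly the one required in dimension $n-1$; zero mass then follows from the expansions $\hat{g} = \bar{g}_{\mathbb{R}^{n-1}} + O(|y|^{2-n})$ and $\hat{\rho} = 1 + O(|y|^{2-n})$, and you simply make explicit the decay bookkeeping (via Proposition~\ref{asymptotic.expansion.of.u}) and the one-end and test-function matching that the paper leaves to the reader.
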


\textbf{Proof.} 
Since $\mathcal{S} = \emptyset$, it follows that $(\Sigma,\hat{g})$ is a complete Riemannian manifold. Using Corollary \ref{quadratic.form.with.respect.to.hat.g}, we can show that $(\Sigma,\hat{g},\hat{\rho},\hat{Q})$ is an $(n-1)$-dataset. We next observe that 
\[\hat{g} = dx_1 \otimes dx_1 + \hdots + dx_{n-1} \otimes dx_{n-1} + O \big ( (x_1^2+\hdots+x_{n-1}^2)^{\frac{2-n}{2}} \big )\] 
and 
\[\hat{\rho} = 1 + O \big ( (x_1^2+\hdots+x_{n-1}^2)^{\frac{2-n}{2}} \big )\] 
near infinity. This shows that $(\Sigma,\hat{g},\hat{\rho},\hat{Q})$ has zero mass (in the sense of Definition \ref{mass.of.dataset}). This completes the proof of Corollary \ref{singular.set.empty}. \\

\subsection{A function on ambient space that blows up at the singular set at a controlled rate} In this subsection, we construct a function that will allow us to apply a conformal blow-up technique in the spirit of Bi-Hao-He-Shi-Zhu \cite{Bi-Hao-He-Shi-Zhu}. 

Throughout this subsection, we assume that the singular set $\mathcal{S}$ is non-empty. In this case, $n \geq 8$. Note that $\mathcal{S}$ is a compact subset of $E$. In particular, $\mathcal{S}$ has positive distance from the boundary $\partial E$. Let us fix a positive real number $t_*$ so that $\sqrt{t_*} < \frac{1}{2} \, d_{(M,g)}(\mathcal{S},\partial E)$ and $\sqrt{t_*} < \frac{1}{2} \, \text{\rm inj}_{(M,g)}(p)$ for each point $p \in \mathcal{S}$. 

Let us fix a nonnegative smooth function $\zeta: (0,\infty) \to \mathbb{R}$ such that 
\[\zeta(t) = \frac{2}{n-3} \, t^{\frac{3-n}{2}} + \frac{4}{2n-7} \, t^{\frac{7-2n}{4}}\] 
for $t \in (0,\frac{t_*}{4}]$ and 
\[\zeta(t) = 0\] 
for $t \in [t_*,\infty)$. In the following, $\Phi$ will denote the function constructed in Lemma \ref{Phi}. 

\begin{lemma}
\label{properties.of.Psi}
We can find a small constant $t_0 \in (0,\frac{t_*}{4})$ with the following significance. Let $p$ be a point in $\mathcal{S}$. Let us define a smooth function $\psi: E \setminus \{p\} \to \mathbb{R}$ by  
\[\psi(x) = \zeta(d_{(M,g)}(p,x)^2)\] 
for all $x \in E \setminus \{p\}$. Then 
\begin{align*} 
&\Delta \psi - (D^2 \psi)(\xi,\xi) + \frac{n-3}{n+1} \, \langle \nabla \log \hat{\rho},\nabla \psi \rangle \\ 
&- \Big ( \Phi - \frac{4}{n+1} \, \langle \nabla \log \hat{\rho},\xi \rangle \Big ) \, \langle \nabla \psi,\xi \rangle \leq -\frac{1}{2} \, d_{(M,g)}(p,x)^{\frac{3-2n}{2}} 
\end{align*} 
for each point $x \in \mathcal{B}_{(M,g)}(p,\sqrt{t_0})$ and every unit vector $\xi \in T_x M$. Moreover, 
\begin{align*} 
&\Delta \psi - (D^2 \psi)(\xi,\xi) + \frac{n-3}{n+1} \, \langle \nabla \log \hat{\rho},\nabla \psi \rangle \\ 
&- \Big ( \Phi - \frac{4}{n+1} \, \langle \nabla \log \hat{\rho},\xi \rangle \Big ) \, \langle \nabla \psi,\xi \rangle \leq C 
\end{align*} 
for each point $x \in E \setminus \{p\}$ and every unit vector $\xi \in T_x M$. 
\end{lemma}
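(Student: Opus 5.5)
The plan is a direct computation in terms of the distance function. Write $d = d_{(M,g)}(p,\cdot)$ and $t = d^2$, so $\psi = \zeta(t)$. Since $\sqrt{t_*} < \frac{1}{2}\,\text{inj}_{(M,g)}(p)$, $d$ is smooth on $\mathcal{B}_{(M,g)}(p,2\sqrt{t_*}) \setminus \{p\}$. There we have
\[\nabla \psi = 2d\,\zeta'(t)\,\nabla d, \qquad D^2\psi = 4t\,\zeta''(t)\,\nabla d \otimes \nabla d + 2\zeta'(t)\,\big(\nabla d \otimes \nabla d + d\,D^2 d\big).\]
I would use the standard comparison expansion $d\,D^2 d = g - \nabla d \otimes \nabla d + O(d^2)$. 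The constant in the error term is uniform in $p \in \mathcal{S}$, because $\mathcal{S}$ is compact and has positive distance from $\partial E$.

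Set $c = \langle \nabla d,\xi\rangle$, so $|c| \leq 1$. Then
\[\Delta \psi - (D^2\psi)(\xi,\xi) = 4t\,\zeta''\,(1-c^2) + 2(n-1)\,\zeta' + O(t\,|\zeta'|).\]
For $t \leq t_*/4$, each of the two pieces of $\zeta$ has the form $a\,t^{-k}$ with $ak = 1$:
\begin{itemize}
\item $a = \frac{2}{n-3}$ and $k = \frac{n-3}{2}$;
\item $a = \frac{4}{2n-7}$ and $k = \frac{2n-7}{4}$.
\end{itemize}
For such a piece, $4t\zeta'' (1-c^2) + 2(n-1)\zeta' = t^{-k-1}\,\big(4(k+1)(1-c^2) - 2(n-1)\big)$.

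For the first piece $4(k+1) = 2(n-1)$, and the bracket becomes exactly $-2(n-1)c^2$. This is the borderline power, whose transverse Laplacian vanishes at leading order. For the second piece $4(k+1) = 2n-3$, and the bracket is $-1-(2n-3)c^2 \leq -1$. Hence the leading part is at most
\[-2(n-1)\,d^{1-n}\,c^2 - d^{\frac{3-2n}{2}}.\]
The error term $O(t|\zeta'|) = O(d^{3-n})$ is of lower order.

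The main step is to absorb the first-order terms. The coefficients $\frac{n-3}{n+1}\nabla \log \hat{\rho}$, $\Phi$ and $\frac{4}{n+1}\langle \nabla \log \hat{\rho},\xi\rangle$ are bounded on a fixed compact neighbourhood of $\mathcal{S}$ inside $E$, since $\Phi$ is smooth in $E$. Also $|\nabla \psi| \leq C\,d^{2-n}$.

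\begin{itemize}
\item The term $\langle \nabla \log \hat{\rho},\nabla \psi\rangle$ is bounded by $C\,d^{2-n} = C\,d^{1/2}\,d^{\frac{3-2n}{2}}$. For small $d$ this is at most $\frac{1}{4}\,d^{\frac{3-2n}{2}}$. This is precisely why the second, slightly more singular piece of $\zeta$ is included.
\item The term multiplying $\langle \nabla \psi,\xi\rangle$ is bounded by $C\,d^{2-n}\,|c|$, since $\langle \nabla \psi,\xi\rangle = 2d\zeta'\,c$. By Young's inequality, $C\,d^{2-n}|c| \leq (n-1)\,d^{1-n}c^2 + C'\,d^{3-n}$. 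The first part is absorbed by the good $c^2$ term, and $C'd^{3-n} \leq \frac{1}{4}\,d^{\frac{3-2n}{2}}$ for small $d$.
\end{itemize}

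Choosing $t_0$ small, uniformly in $p$, then gives the bound $-\frac{1}{2}\,d^{\frac{3-2n}{2}}$ on $\mathcal{B}_{(M,g)}(p,\sqrt{t_0})$. The delicate point is that the $\Phi$-drift term is only controlled through the $c^2$ term coming from the borderline exponent, so the Young's inequality split must be done carefully.

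For the global bound, note that $\psi = 0$ outside $\mathcal{B}_{(M,g)}(p,\sqrt{t_*})$. In the annulus $t_0 \leq d^2 \leq t_*$, $\psi$ is smooth with derivatives bounded uniformly in $p$, by compactness of $\mathcal{S}$ and the choice of $t_*$. The coefficients are also bounded there, since the annulus stays at distance at least $\frac{1}{2}\,d_{(M,g)}(\mathcal{S},\partial E)$ from $\partial E$. Together with the first estimate, which is negative, this gives the uniform constant $C$.
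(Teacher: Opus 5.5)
Your proposal is correct and follows essentially the same computation as the paper: the Hessian of $\zeta(d^2)$ via Hessian comparison, exact cancellation for the borderline piece $\frac{2}{n-3}\,t^{\frac{3-n}{2}}$, and the term $-d^{\frac{3-2n}{2}}$ from the second piece dominating everything else, with the global bound coming from smoothness on the annulus and $\psi=0$ outside $\mathcal{B}_{(M,g)}(p,\sqrt{t_*})$. The one difference is that your Young's inequality step is unnecessary, and your remark that the $\Phi$-drift term can only be controlled through the $c^2$ term is inaccurate: the paper discards the $c^2$ term and bounds the whole first-order part, including the $\Phi$ term, by $C_1\,|d\psi| \leq C\,d^{2-n} = C\,d^{1/2}\,d^{\frac{3-2n}{2}}$, which is absorbed exactly as you absorb the $\hat{\rho}$ term.
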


\textbf{Proof.} 
We define a function $\tau: E \to \mathbb{R}$ by $\tau(x) = d_{(M,g)}(p,x)^2$ for all $x \in E$. The function $\tau$ is smooth in $\mathcal{B}_{(M,g)}(p,\sqrt{t_*})$. The gradient and Hessian of $\tau$ satisfy 
\[|d\tau|^2 = 4\tau\] 
and 
\[D^2 \tau \geq 2 \, (1-C_0\tau) \, g\] 
at each point $x \in \mathcal{B}_{(M,g)}(p,\sqrt{t_*})$. This implies 
\begin{align*} 
D^2 \psi 
&= -(\tau^{\frac{1-n}{2}} + \tau^{\frac{3-2n}{4}}) \, D^2 \tau \\ 
&+ \frac{1}{4} \, \big ( 2(n-1) \, \tau^{-\frac{n+1}{2}} + (2n-3) \, \tau^{-\frac{2n+1}{4}} \big ) \, d\tau \otimes d\tau
\end{align*} 
at each point $x \in \mathcal{B}_{(M,g)}(p,\frac{\sqrt{t_*}}{2})$. This gives 
\begin{align*} 
&\Delta \psi - (D^2 \psi)(\xi,\xi) \\ 
&= -(\tau^{\frac{1-n}{2}} + \tau^{\frac{3-2n}{4}}) \, (\Delta \tau - (D^2 \tau)(\xi,\xi)) \\ 
&+ \frac{1}{4} \, \big ( 2(n-1) \, \tau^{-\frac{n+1}{2}} + (2n-3) \, \tau^{-\frac{2n+1}{4}} \big ) \, (|\nabla \tau|^2 - \langle \nabla \tau,\xi \rangle^2) \\ 
&\leq -2(n-1) \, (\tau^{\frac{1-n}{2}} + \tau^{\frac{3-2n}{4}}) \, (1-C_0\tau) \\ 
&+ \big ( 2(n-1) \, \tau^{-\frac{n+1}{2}} + (2n-3) \, \tau^{-\frac{2n+1}{4}} \big ) \, \tau \\ 
&= -\tau^{\frac{3-2n}{4}} + 2(n-1)C_0 \, (\tau^{\frac{3-n}{2}} + \tau^{\frac{7-2n}{4}}) 
\end{align*} 
at each point $x \in \mathcal{B}_{(M,g)}(p,\frac{\sqrt{t_*}}{2})$. Since $\sqrt{t_*} < \frac{1}{2} \, d_{(M,g)}(\mathcal{S},\partial E)$, the function $\Phi$ is uniformly bounded on $\mathcal{B}_{(M,g)}(p,\sqrt{t_*})$. This implies 
\begin{align*} 
&\frac{n-3}{n+1} \, \langle \nabla \log \hat{\rho},\nabla \psi \rangle - \Big ( \Phi - \frac{4}{n+1} \, \langle \nabla \log \hat{\rho},\xi \rangle \Big ) \, \langle \nabla \psi,\xi \rangle \\ 
&\leq C_1 \, |d\psi| \\ 
&= C_1 \, (\tau^{\frac{1-n}{2}} + \tau^{\frac{3-2n}{4}}) \, |d\tau| \\ 
&\leq 2C_1 \, (\tau^{\frac{2-n}{2}} + \tau^{\frac{5-2n}{4}}) 
\end{align*} 
at each point $x \in \mathcal{B}_{(M,g)}(p,\frac{\sqrt{t_*}}{2})$. Putting these facts together, we obtain 
\begin{align*} 
&\Delta \psi - (D^2 \psi)(\xi,\xi) + \frac{n-3}{n+1} \, \langle \nabla \log \hat{\rho},\nabla \psi \rangle \\ 
&- \Big ( \Phi - \frac{4}{n+1} \, \langle \nabla \log \hat{\rho},\xi \rangle \Big ) \, \langle \nabla \psi,\xi \rangle \\ 
&\leq -\tau^{\frac{3-2n}{4}} + 2(n-1)C_0 \, (\tau^{\frac{3-n}{2}} + \tau^{\frac{7-2n}{4}}) \\ 
&+ 2C_1 \, (\tau^{\frac{2-n}{2}} + \tau^{\frac{5-2n}{4}}) 
\end{align*} 
at each point $x \in \mathcal{B}_{(M,g)}(p,\frac{\sqrt{t_*}}{2})$. Hence, if we choose $t_0$ sufficiently small, then the first statement holds. Moreover, the second statement holds in the ball $\mathcal{B}_{(M,g)}(p,\frac{\sqrt{t_*}}{2})$. Finally, it is easy to see that the second statement holds on the set $E \setminus \mathcal{B}_{(M,g)}(p,\frac{\sqrt{t_*}}{2})$. This completes the proof of Lemma \ref{properties.of.Psi}. \\

In the next step, we need an estimate for the Minkowski dimension of the singular set $\mathcal{S}$.

\begin{theorem}[cf. J.~Cheeger, A.~Naber \cite{Cheeger-Naber}, Theorem 5.8] 
\label{Minkowski.dimension}
The singular set $\mathcal{S}$ has Minkowski dimension at most $n-8$. 
\end{theorem}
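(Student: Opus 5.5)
This statement is a citation, so I will not build a new proof. Instead I will check that the hypotheses of Cheeger--Naber's Theorem 5.8 hold for our $\hat{\Omega}$. That theorem is proved for minimizing hypersurfaces, or more generally stationary integral varifolds with a codimension-one minimizing property, in a smooth Riemannian ambient space. It shows that for every $\epsilon>0$ the tubular neighborhood $\mathcal{N}(\mathcal{S}\cap B,r)$ of the singular set in a compact ball has volume at most $C(\epsilon)\,r^{8-\epsilon}$. This is precisely the statement that the Minkowski dimension is at most $n-8$.

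\textbf{Step 1: reduce to unweighted area.} Our $\hat{\Omega}$ is not an area minimizer. By Lemma \ref{hat.Omega.is.a.minimizer}, it locally minimizes the weighted functional $\mathcal{F}(\cdot;\hat{U})$. I would first use the conformal metric $\check{g}=\hat{\rho}^{\frac{2}{n-1}}\,g$, which is smooth near the compact set $\mathcal{S}$ (recall $\mathcal{S}$ is compact by Proposition \ref{regularity.near.infinity}, and $\hat{\rho}>0$ is smooth). With respect to $\check{g}$, the $\hat{\rho}$-weighted area becomes ordinary $(n-1)$-dimensional area. The bulk term $\int_\Omega \hat{\rho}\,\Phi$ is bounded by $C\,\mathrm{Vol}(\Omega\triangle\hat{\Omega})$, because $\Phi$ is bounded on the $\sqrt{t_*}$-neighborhood of $\mathcal{S}$, which stays away from $\partial E$. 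Hence $\hat{\Omega}$ is an almost-minimizer of perimeter in $(E,\check{g})$, in the sense that
\[
\mathrm{Per}_{\check g}(\hat{\Omega};B_r)\le \mathrm{Per}_{\check g}(\Omega;B_r)+C\,r^{n}
\]
for competitors differing inside $B_r$. Equivalently, $\partial^*\hat{\Omega}$ is a stationary varifold with bounded generalized mean curvature (namely $\Phi$) and is stable.

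\textbf{Step 2: apply Cheeger--Naber.} Their quantitative stratification argument uses only three ingredients:
\begin{itemize}
\item the monotonicity formula, here in its almost-monotone form $e^{Cr}\,\theta(x,r)$ nondecreasing;
\item compactness of the class of almost-minimizers;
\item the fact that every tangent cone is an area-minimizing cone in $\mathbb{R}^n$, so that by Simons' theorem it is either a plane or has singular set of dimension at most $n-8$.
\end{itemize}
All three survive a lower-order perturbation. After rescaling, the term $Cr^{n}$ becomes $Cr\cdot r^{n-1}$, which disappears in blow-up limits, so all tangent cones are genuinely area minimizing. Similarly, the ambient metric $\check{g}$ converges to Euclidean space under rescaling.

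\textbf{Step 3: conclude.} It follows that $\mathcal{S}$, which is contained in the top quantitative stratum $\mathcal{S}^{n-8}_{\eta,r}$, satisfies
\[
\mathrm{Vol}\big(\mathcal{N}(\mathcal{S},r)\big)\le C\,r^{8-\epsilon}.
\]
Covering the compact set $\mathcal{S}$ by finitely many balls then gives the global bound, so the Minkowski dimension is at most $n-8$.

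\textbf{Main obstacle.} The only delicate point is checking that the $\epsilon$-regularity and cone-splitting inputs of Cheeger--Naber apply to almost-minimizers with the drift term. I would handle this by citing the standard fact that bounded mean curvature perturbations preserve Allard-type regularity and almost-monotonicity, and then following their argument verbatim.
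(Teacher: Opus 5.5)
Your proposal is correct and follows essentially the same route as the paper. The paper gives no separate proof: it cites Cheeger--Naber and remarks that their monotonicity-based argument carries over to $\mu$-bubbles, pointing to Aiex--McCurdy--Minter and Focardi--Marchese--Spadaro. Your conformal change $\check{g}=\hat{\rho}^{\frac{2}{n-1}}g$, which turns $\hat{\Omega}$ into a $(\Lambda,r_0)$-almost-minimizer of perimeter near $\mathcal{S}$, is a clean way to make that extension explicit.

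One small slip: the ``equivalently'' sentence, describing $\partial^*\hat{\Omega}$ as a stable stationary varifold with bounded mean curvature, is inaccurate. A varifold with nonzero mean curvature is not stationary, and bounded mean curvature plus stability is weaker than almost-minimality. You never use that sentence, though, since your compactness, $\epsilon$-regularity and minimizing-tangent-cone inputs all come from the almost-minimizing property.
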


The bound for the Minkowski dimension of the singular set was originally proved by Cheeger and Naber \cite{Cheeger-Naber} for area-minimizing currents in codimension $1$. Their arguments rely on the monotonicity formula and can be generalized to the setting of $\mu$-bubbles (see \cite{Aiex-McCurdy-Minter},\cite{Focardi-Marchese-Spadaro}).

For each positive integer $l$, we define $t_l = 4^{-l} \, t_0$. For each positive integer $l$, we choose a finite subset $\mathcal{S}^{(l)} \subset \mathcal{S}$ so that the balls $\{\mathcal{B}_{(M,g)}(p,\sqrt{t_l}): p \in \mathcal{S}^{(l)}\}$ are disjoint, and so that $\mathcal{S}^{(l)}$ is maximal with respect to this property. Clearly, 
\[\sum_{p \in \mathcal{S}^{(l)}} \text{\rm vol}_g(\mathcal{B}_{(M,g)}(p,\sqrt{t_l})) \leq \text{\rm vol}_g(\mathcal{N}_{(M,g)}(\mathcal{S},\sqrt{t_l})).\] 
Consequently, the cardinality of $\mathcal{S}^{(l)}$ is bounded from above by 
\[|\mathcal{S}^{(l)}| \leq C \, t_l^{-\frac{n}{2}} \, \text{\rm vol}_g(\mathcal{N}_{(M,g)}(\mathcal{S},\sqrt{t_l})).\] 
On the other hand, since $\mathcal{S}$ has Minkowski dimension at most $n-8$, we know that 
\[\text{\rm vol}_g(\mathcal{N}_{(M,g)}(\mathcal{S},\sqrt{t_l})) \leq C(\mu) \, t_l^{\frac{8-\mu}{2}}\] 
for each $\mu > 0$. Putting these facts together, we obtain 
\[|\mathcal{S}^{(l)}| \leq C(\mu) \, t_l^{\frac{8-n-\mu}{2}}\] 
for each $\mu > 0$. In particular, $\sum_{l=1}^\infty t_l^{\frac{n-5}{2}} \, |\mathcal{S}^{(l)}| < \infty$. 

We now define a function $\Psi: E \setminus \mathcal{S} \to \mathbb{R}$ by 
\begin{equation} 
\label{definition.of.Psi}
\Psi(x) = \sum_{l=1}^\infty \sum_{p \in \mathcal{S}^{(l)}} t_l^{\frac{n-5}{2}} \, \zeta(d_{(M,g)}(p,x)^2) 
\end{equation}
for all points $x \in E \setminus \mathcal{S}$. Since $\sum_{l=1}^\infty t_l^{\frac{n-5}{2}} \, |\mathcal{S}^{(l)}| < \infty$, the series converges and the function $\Psi$ is well-defined. Moreover, using the fact that $\sum_{l=1}^\infty t_l^{\frac{n-5}{2}} \, |\mathcal{S}^{(l)}| < \infty$, it is easy to see that $\Psi$ is a smooth function on $E \setminus \mathcal{S}$. Indeed, for every nonnegative integer $m$, we can bound 
\begin{align*} 
|D^m \Psi| 
&\leq C(m) \sum_{l=1}^\infty \sum_{p \in \mathcal{S}^{(l)}} t_l^{\frac{n-5}{2}} \, d_{(M,g)}(p,x)^{3-n-m} \\ 
&\leq C(m) \sum_{l=1}^\infty t_l^{\frac{n-5}{2}} \, |\mathcal{S}^{(l)}| \, d_{(M,g)}(x,\mathcal{S})^{3-n-m}, 
\end{align*} 
where $D^m$ denotes the covariant derivative of order $m$ with respect to the metric $g$.

\begin{proposition}
\label{lower.bound.for.Psi}
Suppose that $x$ is a point in $E \setminus \mathcal{S}$ satisfying $d_{(M,g)}(x,\mathcal{S}) < \frac{1}{4} \, \sqrt{t_0}$. Then $\Psi(x) \geq \frac{1}{2(n-3)} \, 3^{3-n} \, d_{(M,g)}(x,\mathcal{S})^{-2}$. 
\end{proposition}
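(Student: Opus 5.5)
We bound $\Psi(x)$ from below by a single term of the series (\ref{definition.of.Psi}). All terms are nonnegative because $\zeta \geq 0$. So it suffices to find one level $l$ and one point $p \in \mathcal{S}^{(l)}$ for which $t_l^{\frac{n-5}{2}} \, \zeta(d_{(M,g)}(p,x)^2)$ is already at least $\frac{1}{2(n-3)} \, 3^{3-n} \, d^{-2}$, where we write $d = d_{(M,g)}(x,\mathcal{S})$.

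\textbf{Step 1: choose the scale.} Since $d < \frac{1}{4}\sqrt{t_0}$ and $\sqrt{t_l} = 2^{-l}\sqrt{t_0}$, we can pick the positive integer $l$ with $\sqrt{t_l} \leq 2d < \sqrt{t_{l-1}} = 2\sqrt{t_l}$. In other words, $d < \sqrt{t_l} \leq 2d$. The assumption $d<\frac14\sqrt{t_0}$ guarantees that $l \geq 1$; in fact $l \geq 2$.

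\textbf{Step 2: choose the point.} Let $q \in \mathcal{S}$ be a point with $d_{(M,g)}(x,q) = d$; it exists because $\mathcal{S}$ is compact. By maximality of $\mathcal{S}^{(l)}$, the ball $\mathcal{B}_{(M,g)}(q,\sqrt{t_l})$ meets some ball $\mathcal{B}_{(M,g)}(p,\sqrt{t_l})$ with $p \in \mathcal{S}^{(l)}$. If $q \in \mathcal{S}^{(l)}$, take $p=q$. In either case $d_{(M,g)}(p,q) < 2\sqrt{t_l}$. Hence
\[d_{(M,g)}(p,x) < d + 2\sqrt{t_l} \leq 3\sqrt{t_l}.\]

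\textbf{Step 3: evaluate the term.} We need $\zeta(d(p,x)^2)$ to lie in the explicit regime. We have $d(p,x)^2 < 9t_l = \frac{9}{16}\,t_{l-2}$, and $t_{l-2} \leq t_0 < \frac{t_*}{4}$ since $l \geq 2$. So $d(p,x)^2 < \frac{t_*}{4}$, and the explicit formula for $\zeta$ applies. This is the one point where the index bookkeeping must be checked with care.

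Dropping the second (positive) summand of $\zeta$ and using that $t \mapsto t^{\frac{3-n}{2}}$ is decreasing, we get
\[t_l^{\frac{n-5}{2}}\,\zeta(d(p,x)^2) \geq \frac{2}{n-3}\,t_l^{\frac{n-5}{2}}\,(9t_l)^{\frac{3-n}{2}} = \frac{2}{n-3}\,3^{3-n}\,t_l^{-1}.\]
Finally, $t_l \leq 4d^2$ gives $t_l^{-1} \geq \frac14 d^{-2}$. The lower bound is therefore $\frac{1}{2(n-3)}\,3^{3-n}\,d^{-2}$, which is the claimed bound.

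The main obstacle is only the consistency of constants: confirming that the chosen scale keeps $d(p,x)^2$ within the domain $(0,\frac{t_*}{4}]$ of the explicit formula for $\zeta$. The factor $3$ comes from the triangle inequality in Step 2, and the factor $\frac14$ from the dyadic choice in Step 1.
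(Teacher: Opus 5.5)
Your proof is correct and follows the paper's argument essentially step for step. Your scale $l$ (with $d<\sqrt{t_l}\le 2d$) is exactly the paper's $l_0$, and you then use maximality of $\mathcal{S}^{(l)}$ and the triangle inequality to get $d_{(M,g)}(p,x)\le 3\sqrt{t_l}$ and keep a single term of the series, just as the paper does. The only cosmetic differences are that you take $q$ to be a nearest point of $\mathcal{S}$, and you check the domain of the explicit formula for $\zeta$ via $9t_l=\frac{9}{16}t_{l-2}$ rather than via $3\sqrt{t_{l_0}}\le\sqrt{t_0}\le\frac{\sqrt{t_*}}{2}$.
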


\textbf{Proof.}
Since $d_{(M,g)}(x,\mathcal{S}) < \frac{1}{4} \, \sqrt{t_0}$, we can find an integer $l_0 \geq 2$ such that $\sqrt{t_{l_0+1}} \leq d_{(M,g)}(x,\mathcal{S}) < \sqrt{t_{l_0}}$. We can find a point $q \in \mathcal{S}$ such that $d_{(M,g)}(x,q) \leq \sqrt{t_{l_0}}$. In view of the maximality of $\mathcal{S}^{(l_0)}$, we can find a point $p_0 \in \mathcal{S}^{(l_0)}$ such that $\mathcal{B}_{(M,g)}(p_0,\sqrt{t_{l_0}}) \cap \mathcal{B}_{(M,g)}(q,\sqrt{t_{l_0}}) \neq \emptyset$. Using the triangle inequality, we obtain $d_{(M,g)}(p_0,q) \leq 2\sqrt{t_{l_0}}$. This implies 
\[d_{(M,g)}(p_0,x) \leq d_{(M,g)}(p_0,q) + d_{(M,g)}(x,q) \leq 3\sqrt{t_{l_0}}.\] 
Since $l_0 \geq 2$, it follows that $d_{(M,g)}(p_0,x) \leq \sqrt{t_0} \leq \frac{\sqrt{t_*}}{2}$. Thus, we conclude that 
\[\zeta(d_{(M,g)}(p_0,x)^2) \geq \frac{2}{n-3} \, d_{(M,g)}(p_0,x)^{3-n} \geq \frac{2}{n-3} \, 3^{3-n} \, t_{l_0}^{\frac{3-n}{2}}.\] 
This finally implies 
\begin{align*} 
\Psi(x) 
&\geq t_{l_0}^{\frac{n-5}{2}} \, \zeta(d_{(M,g)}(p_0,x)^2) \\ 
&\geq \frac{2}{n-3} \, 3^{3-n} \, t_{l_0}^{-1} \\ 
&\geq \frac{1}{2(n-3)} \, 3^{3-n} \, d_{(M,g)}(x,\mathcal{S})^{-2}. 
\end{align*}
This completes the proof of Proposition \ref{lower.bound.for.Psi}. \\

\begin{proposition}
\label{key.property.of.Psi}
Suppose that $x$ is a point in $E \setminus \mathcal{S}$ satisfying $d_{(M,g)}(x,\mathcal{S}) < \frac{1}{4} \, \sqrt{t_0}$, and $\xi \in T_x M$ is a unit vector. If $d_{(M,g)}(x,\mathcal{S})$ is sufficiently small, then 
\begin{align*} 
&\Delta \Psi - (D^2 \Psi)(\xi,\xi) + \frac{n-3}{n+1} \, \langle \nabla \log \hat{\rho},\nabla \Psi \rangle \\ 
&- \Big ( \Phi - \frac{4}{n+1} \, \langle \nabla \log \hat{\rho},\xi \rangle \Big ) \, \langle \nabla \Psi,\xi \rangle < 0 
\end{align*} 
at the point $x$.
\end{proposition}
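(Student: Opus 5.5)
The operator
\[L_\xi \psi = \Delta \psi - (D^2 \psi)(\xi,\xi) + \frac{n-3}{n+1} \, \langle \nabla \log \hat{\rho},\nabla \psi \rangle - \Big ( \Phi - \frac{4}{n+1} \, \langle \nabla \log \hat{\rho},\xi \rangle \Big ) \, \langle \nabla \psi,\xi \rangle\]
is linear in $\psi$ for fixed $x$ and $\xi$. Since $\Psi$ is a convergent sum of the functions $\psi_p = \zeta(d_{(M,g)}(p,\cdot)^2)$ with weights $t_l^{\frac{n-5}{2}}$, and the series may be differentiated termwise (by the derivative bounds established right after (\ref{definition.of.Psi})), we have
\[L_\xi \Psi(x) = \sum_{l=1}^\infty \sum_{p \in \mathcal{S}^{(l)}} t_l^{\frac{n-5}{2}} \, L_\xi \psi_p(x).\]
The plan is to split this sum into a single ``good'' term and the rest. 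The good term is the one from the point $p_0 \in \mathcal{S}^{(l_0)}$ found exactly as in the proof of Proposition \ref{lower.bound.for.Psi}. The remaining terms are bounded using the second statement of Lemma \ref{properties.of.Psi}, namely $L_\xi \psi_p \leq C$ everywhere. In particular, every term with $d(p,x) \geq \sqrt{t_0}$ contributes at most $C \, t_l^{\frac{n-5}{2}}$. Terms with $d(p,x) < \sqrt{t_0}$ are nonpositive by the first statement of Lemma \ref{properties.of.Psi}, so they can simply be dropped, except for $p_0$.

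\textbf{Good term.} With $d = d_{(M,g)}(x,\mathcal{S})$ and $\sqrt{t_{l_0+1}} \leq d < \sqrt{t_{l_0}}$, we have $d(p_0,x) \leq 3\sqrt{t_{l_0}} \leq \sqrt{t_0}$. Strictly speaking, we need $d(p_0,x) < \sqrt{t_0}$, which holds once $l_0$ is large, i.e. once $d$ is small. The first statement of Lemma \ref{properties.of.Psi} then gives
\[t_{l_0}^{\frac{n-5}{2}} \, L_\xi \psi_{p_0}(x) \leq -\frac12 \, t_{l_0}^{\frac{n-5}{2}} \, (3\sqrt{t_{l_0}})^{\frac{3-2n}{2}} = -c(n) \, t_{l_0}^{\frac{2n-10+3-2n}{4}} = -c(n) \, t_{l_0}^{-\frac{7}{4}}.\]
Since $t_{l_0} \leq 4d^2$, this is at most $-c'(n) \, d^{-\frac{7}{2}}$. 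This blows up as $d \to 0$, and this is exactly what the extra $t^{\frac{7-2n}{4}}$ term in $\zeta$ was designed to produce.

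\textbf{Remaining terms.} The terms with $d(p,x) \geq \sqrt{t_0}$ contribute at most $C \sum_l t_l^{\frac{n-5}{2}} |\mathcal{S}^{(l)}| = C' < \infty$, using the summability that follows from Theorem \ref{Minkowski.dimension}. Terms with $d(p,x) < \sqrt{t_0}$ and $p \neq p_0$ are $\leq 0$. Hence
\[L_\xi \Psi(x) \leq C' - c'(n) \, d^{-\frac72},\]
which is negative once $d$ is sufficiently small. Note that the point $x$ lies off $\mathcal{S}$, so every $\psi_p$ is smooth at $x$. For $p$ with $d(p,x) < \sqrt{t_*}/2$, the distance function is smooth there by the choice of $t_*$ relative to the injectivity radius, so Lemma \ref{properties.of.Psi} applies to each term.

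\textbf{Main obstacle.} The only genuinely delicate point is justifying the termwise application of the uniform bound $L_\xi\psi_p \le C$ and the exchange of $L_\xi$ with the infinite sum. The derivative bound $|D^m\Psi| \le C(m)\sum_l t_l^{\frac{n-5}{2}} |\mathcal{S}^{(l)}| \, d^{3-n-m}$ gives uniform convergence of first and second derivatives on compact subsets of $E \setminus \mathcal{S}$, which suffices. Beyond that, one only needs to check that the constant $C$ in Lemma \ref{properties.of.Psi} is independent of $p$. This holds because $\Phi$ and $\hat{\rho}$ are bounded with their derivatives on the relevant neighborhood of the compact set $\mathcal{S}$, and the curvature and injectivity radius are uniformly controlled there.
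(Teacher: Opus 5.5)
Your proposal is correct and follows essentially the same argument as the paper. You isolate the term from $p_0 \in \mathcal{S}^{(l_0)}$, apply the first statement of Lemma \ref{properties.of.Psi} to get the bound $-\frac{1}{2}\,3^{\frac{3-2n}{2}}\,t_{l_0}^{-\frac{7}{4}}$, and absorb everything else into the summable constant $C\sum_l t_l^{\frac{n-5}{2}}|\mathcal{S}^{(l)}|$; the only cosmetic difference is that you drop the other nearby terms as nonpositive instead of bounding every remaining term by $C$, as the paper does.
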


\textbf{Proof.}
Since $d_{(M,g)}(x,\mathcal{S}) < \frac{1}{4} \, \sqrt{t_0}$, we can find an integer $l_0 \geq 2$ such that $\sqrt{t_{l_0+1}} \leq d_{(M,g)}(x,\mathcal{S}) < \sqrt{t_{l_0}}$. As above, we can find a point $p_0 \in \mathcal{S}^{(l_0)}$ such that $d_{(M,g)}(p_0,x) \leq 3\sqrt{t_{l_0}}$. Using Lemma \ref{properties.of.Psi}, we obtain 
\begin{align*} 
&\Delta \Psi - (D^2 \Psi)(\xi,\xi) + \frac{n-3}{n+1} \, \langle \nabla \log \hat{\rho},\nabla \Psi \rangle \\ 
&- \Big ( \Phi - \frac{4}{n+1} \, \langle \nabla \log \hat{\rho},\xi \rangle \Big ) \, \langle \nabla \Psi,\xi \rangle \\ 
&\leq -\frac{1}{2} \, t_{l_0}^{\frac{n-5}{2}} \, d_{(M,g)}(p_0,x)^{\frac{3-2n}{2}} + C \sum_{l=1}^\infty t_l^{\frac{n-5}{2}} \, |\mathcal{S}^{(l)}| \\ 
&\leq -\frac{1}{2} \, 3^{\frac{3-2n}{2}} \, t_{l_0}^{-\frac{7}{4}} + C \sum_{l=1}^\infty t_l^{\frac{n-5}{2}} \, |\mathcal{S}^{(l)}|. 
\end{align*} 
Since $\sum_{l=1}^\infty t_l^{\frac{n-5}{2}} \, |\mathcal{S}^{(l)}| < \infty$, the expression on the right hand side is negative if $l_0$ is sufficiently large. This completes the proof of Proposition \ref{key.property.of.Psi}. \\

\begin{corollary}
\label{choice.of.varepsilon_0}
We can find a small positive real number $\varepsilon_0$ with the property that 
\begin{align*} 
&\Delta \Psi - (D^2 \Psi)(\xi,\xi) + \frac{n-3}{n+1} \, \langle \nabla \log \hat{\rho},\nabla \Psi \rangle \\ 
&- \Big ( \Phi - \frac{4}{n+1} \, \langle \nabla \log \hat{\rho},\xi \rangle \Big ) \, \langle \nabla \Psi,\xi \rangle \leq \frac{n-3}{n+1} \, \varepsilon_0^{-1} \, \hat{Q} 
\end{align*} 
for each point $x \in E \setminus \mathcal{S}$ and every unit vector $\xi \in T_x M$.
\end{corollary}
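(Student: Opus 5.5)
Write $L_\xi\Psi$ for the expression on the left-hand side of Corollary \ref{choice.of.varepsilon_0}. The plan is to show that $L_\xi\Psi$ is bounded above by a constant uniformly in $x$ and $\xi$, and then to use that $\hat{Q}$ is positive and bounded below on the region where $L_\xi\Psi$ can be positive. I would split $E\setminus\mathcal{S}$ into three regions: a small neighborhood $\{d_{(M,g)}(x,\mathcal{S})<r_1\}$ of the singular set, the bounded set $\{d_{(M,g)}(x,\mathcal{S})\geq r_1\}\cap \mathcal{N}_{(M,g)}(\mathcal{S},2\sqrt{t_*})$, and the rest of $E$.

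\textbf{Near $\mathcal{S}$.} Choose $r_1<\frac14\sqrt{t_0}$ so small that Proposition \ref{key.property.of.Psi} applies whenever $d_{(M,g)}(x,\mathcal{S})<r_1$. There $L_\xi\Psi<0$, so the inequality holds for any $\varepsilon_0>0$, since $\hat{Q}>0$.

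\textbf{Far from $\mathcal{S}$.} Every summand in (\ref{definition.of.Psi}) satisfies $\zeta=0$ once $d_{(M,g)}(p,x)^2\geq t_*$. Hence $\Psi\equiv 0$ outside $\mathcal{N}_{(M,g)}(\mathcal{S},\sqrt{t_*})$, so $L_\xi\Psi=0$ there. This covers in particular a neighborhood of $\partial E$ and the asymptotically flat end. This matters because $\Phi\to-\infty$ near $\partial E$, so the term involving $\Phi$ is only harmless where $\nabla\Psi$ vanishes. The left-hand side is then $0$, and the inequality again holds.

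\textbf{Intermediate region.} The set $K=\{x: r_1\le d_{(M,g)}(x,\mathcal{S})\le \sqrt{t_*}\}$ is compact, because $\mathcal{S}$ is compact. It also lies in $E$ at positive distance from $\partial E$, by the choice $\sqrt{t_*}<\frac12 d_{(M,g)}(\mathcal{S},\partial E)$. On $K$, the term-by-term bound $|D^m\Psi|\le C(m)\sum_l t_l^{\frac{n-5}{2}}|\mathcal{S}^{(l)}|\, d_{(M,g)}(x,\mathcal{S})^{3-n-m}$ for $m=1,2$ gives uniform bounds on $\nabla\Psi$ and $D^2\Psi$. Also $\Phi$ and $\nabla\log\hat\rho$ are bounded on $K$, since $\hat\rho$ is smooth and positive on $\bar E$. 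Therefore $L_\xi\Psi\le C$ on $K$, uniformly in $\xi$.

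Alternatively, one can sum the second statement of Lemma \ref{properties.of.Psi} over all the points, which gives $L_\xi\Psi\le C\sum_l t_l^{\frac{n-5}{2}}|\mathcal{S}^{(l)}|<\infty$ everywhere. This summation is legitimate because $L_\xi$ is linear in $\Psi$ for fixed $\xi$, and the series converges in $C^2_{\mathrm{loc}}(E\setminus\mathcal{S})$. Finally, $\hat Q$ is smooth and positive on $E$, so $\min_K\hat Q>0$. Choosing $\varepsilon_0\le \frac{n-3}{n+1}\min_K\hat Q/C$ finishes the proof.

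The only real subtlety is making sure that $\Phi$, which is unbounded near $\partial E$, and the unbounded AF end both lie outside the support of $\nabla\Psi$. The choice of $t_*$ guarantees this. The rest is a compactness argument.
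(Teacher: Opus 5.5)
Your proof is correct and follows the paper's argument. The left-hand side vanishes outside $\mathcal{N}_{(M,g)}(\mathcal{S},\sqrt{t_*})$, in particular near $\partial E$ and near infinity; Proposition \ref{key.property.of.Psi} makes it negative close to $\mathcal{S}$; and on the remaining compact region it is bounded while $\hat{Q}$ has a positive minimum — you simply spell out the compactness step that the paper leaves implicit.
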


\textbf{Proof.} 
The function $\Psi$ vanishes near $\partial E$ and near infinity. Moreover, Proposition \ref{key.property.of.Psi} implies that 
\begin{align*} 
&\Delta \Psi - (D^2 \Psi)(\xi,\xi) + \frac{n-3}{n+1} \, \langle \nabla \log \hat{\rho},\nabla \Psi \rangle \\ 
&- \Big ( \Phi - \frac{4}{n+1} \, \langle \nabla \log \hat{\rho},\xi \rangle \Big ) \, \langle \nabla \Psi,\xi \rangle < 0 
\end{align*} 
if $d_{(M,g)}(x,\mathcal{S})$ is sufficiently small and $\xi \in T_x M$ is a unit vector. Since the function $\hat{Q}$ is strictly positive, the assertion follows. This completes the proof of Corollary \ref{choice.of.varepsilon_0}. \\

\begin{corollary}
\label{restriction.of.Psi}
Let $\varepsilon_0$ be chosen as in Corollary \ref{choice.of.varepsilon_0}. Then 
\[\Delta_\Sigma \Psi + \frac{n-3}{n+1} \, \langle \nabla^\Sigma \log \hat{\rho},\nabla^\Sigma \Psi \rangle \leq \frac{n-3}{n+1} \, \varepsilon_0^{-1} \, \hat{Q}\] 
at each point on $\Sigma$.
\end{corollary}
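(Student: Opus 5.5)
We restrict $\Psi$ to $\Sigma$ and express $\Delta_\Sigma \Psi$ through ambient quantities. At a point of $\Sigma$ we have the standard identity
\[\Delta_\Sigma \Psi = \Delta \Psi - (D^2 \Psi)(\nu_\Sigma,\nu_\Sigma) - H_\Sigma \, \langle \nabla \Psi,\nu_\Sigma \rangle,\]
together with the splitting
\[\langle \nabla^\Sigma \log \hat{\rho},\nabla^\Sigma \Psi \rangle = \langle \nabla \log \hat{\rho},\nabla \Psi \rangle - \langle \nabla \log \hat{\rho},\nu_\Sigma \rangle \, \langle \nabla \Psi,\nu_\Sigma \rangle.\]
The plan is to combine these with the first variation equation $H_\Sigma = \Phi - \langle \nabla \log \hat{\rho},\nu_\Sigma \rangle$ (from the definition of $\Sigma$). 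This turns the left-hand side into an expression of exactly the form appearing in Corollary \ref{choice.of.varepsilon_0}, with $\xi = \nu_\Sigma$.

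Carrying this out, we get
\begin{align*}
&\Delta_\Sigma \Psi + \frac{n-3}{n+1} \, \langle \nabla^\Sigma \log \hat{\rho},\nabla^\Sigma \Psi \rangle \\
&= \Delta \Psi - (D^2\Psi)(\nu_\Sigma,\nu_\Sigma) + \frac{n-3}{n+1} \, \langle \nabla \log \hat{\rho},\nabla \Psi \rangle \\
&- \Big( \Phi - \langle \nabla \log \hat{\rho},\nu_\Sigma \rangle + \frac{n-3}{n+1} \, \langle \nabla \log \hat{\rho},\nu_\Sigma \rangle \Big) \, \langle \nabla \Psi,\nu_\Sigma \rangle.
\end{align*}
Since $1 - \frac{n-3}{n+1} = \frac{4}{n+1}$, the coefficient in parentheses equals $\Phi - \frac{4}{n+1} \, \langle \nabla \log \hat{\rho},\nu_\Sigma \rangle$. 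The right-hand side is therefore precisely the expression in Corollary \ref{choice.of.varepsilon_0}, evaluated at $x \in \Sigma \subset E \setminus \mathcal{S}$ with the unit vector $\xi = \nu_\Sigma(x)$. That corollary bounds it by $\frac{n-3}{n+1} \, \varepsilon_0^{-1} \, \hat{Q}$, which is the claim.

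The only point requiring care is the sign convention. The formula $\Delta_\Sigma \Psi = \Delta\Psi - D^2\Psi(\nu,\nu) - H\langle\nabla\Psi,\nu\rangle$ must use the same convention for $H_\Sigma$ and $\nu_\Sigma$ as the equation $H_\Sigma + \langle \nabla \log \hat{\rho},\nu_\Sigma\rangle = \Phi$. We check this against the identity (\ref{Laplacian.of.log.hat.rho}) used in the proof of Proposition \ref{generalized.Schoen.Yau.identity}, which is the same formula applied to $\log \hat{\rho}$. Since (\ref{Laplacian.of.log.hat.rho}) has this form with the paper's conventions, the computation goes through. Beyond this, $\Psi$ is smooth on $E \setminus \mathcal{S}$, so all quantities are defined pointwise on $\Sigma$ and no further approximation is needed.
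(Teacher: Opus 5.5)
Your proposal is correct and is essentially the paper's proof: express $\Delta_\Sigma \Psi$ and $\langle \nabla^\Sigma \log \hat{\rho},\nabla^\Sigma \Psi \rangle$ through ambient quantities, substitute $H_\Sigma = \Phi - \langle \nabla \log \hat{\rho},\nu_\Sigma \rangle$, and apply the pointwise bound with $\xi = \nu_\Sigma$. You cite Corollary \ref{choice.of.varepsilon_0} for that bound, which is the right reference. The paper's text names Proposition \ref{key.property.of.Psi} at this step, but only the corollary gives the bound $\frac{n-3}{n+1}\,\varepsilon_0^{-1}\,\hat{Q}$ at every point of $\Sigma$, not just near $\mathcal{S}$.
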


\textbf{Proof.} 
The hypersurface $\Sigma$ satisfies 
\[H_\Sigma + \langle \nabla \log \hat{\rho},\nu_\Sigma \rangle = \Phi.\]
This implies 
\begin{align*} 
&\Delta_\Sigma \Psi + \frac{n-3}{n+1} \, \langle \nabla^\Sigma \log \hat{\rho},\nabla^\Sigma \Psi \rangle \\ 
&= \Delta \Psi - (D^2 \Psi)(\nu_\Sigma,\nu_\Sigma) + \frac{n-3}{n+1} \, \langle \nabla \log \hat{\rho},\nabla \Psi \rangle \\ 
&- \Big ( \Phi - \frac{4}{n+1} \, \langle \nabla \log \hat{\rho},\nu_\Sigma \rangle \Big ) \, \langle \nabla \Psi,\nu_\Sigma \rangle. 
\end{align*}
If we apply Proposition \ref{key.property.of.Psi} with $\xi = \nu_\Sigma$, the assertion follows. This completes the proof of Corollary \ref{restriction.of.Psi}. \\

\subsection{The conformal blow-up procedure and the conclusion of the inductive step} In this subsection, we complete the inductive step and conclude the proof of Theorem \ref{pmt}. Throughout this subsection, we assume that the singular set $\mathcal{S}$ is non-empty. In this case, $n \geq 8$. Let $\Psi$ denote the function defined in (\ref{definition.of.Psi}), and let $\varepsilon_0$ be chosen as in Corollary \ref{choice.of.varepsilon_0}. We define a function $w: \Sigma \to \mathbb{R}$ by 
\begin{equation} 
\label{definition.of.w}
w = 1 + \varepsilon_0 \, \Psi|_\Sigma. 
\end{equation}
Since $\Psi$ is nonnegative, we have 
\begin{equation} 
\label{lower.bound.for.w}
w \geq 1
\end{equation}
at each point on $\Sigma$. 

In the next step, we define a conformal metric $\tilde{g}$ on $\Sigma$ by 
\[\tilde{g} = w^{\frac{n+1}{n-3}} \, \hat{g}.\] 
Moreover, we define a positive smooth function $\tilde{\rho}$ on $\Sigma$ by 
\[\tilde{\rho} = w^{-\frac{n+1}{2}} \, \hat{\rho}.\] 
Finally, we define a function $\tilde{Q}$ on $\Sigma$ by 
\[\tilde{Q} = \frac{1}{2} \, w^{-\frac{n+1}{n-3}} \, \hat{Q}.\] 

\begin{lemma}
\label{inequality.for.tilde.Q}
We have 
\[w^{\frac{n+1}{n-3}} \, \tilde{Q} \leq \hat{Q} - \frac{n+1}{2(n-3)} \, w^{-1} \, \Delta_{\hat{g}} w - \frac{1}{2} \, w^{-1} \, \langle d\log \hat{\rho},dw \rangle_{\hat{g}}.\]
\end{lemma}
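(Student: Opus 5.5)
Since $\tilde{Q} = \frac{1}{2} \, w^{-\frac{n+1}{n-3}} \, \hat{Q}$, the left hand side equals $\frac{1}{2} \, \hat{Q}$. The claim is therefore equivalent to
\[\frac{n+1}{2(n-3)} \, w^{-1} \, \Delta_{\hat{g}} w + \frac{1}{2} \, w^{-1} \, \langle d\log \hat{\rho},dw \rangle_{\hat{g}} \leq \frac{1}{2} \, \hat{Q}.\]
Multiplying by $\frac{2(n-3)}{n+1} \, w$, which is positive by (\ref{lower.bound.for.w}), it becomes
\[\Delta_\Sigma w + \frac{n-3}{n+1} \, \langle \nabla^\Sigma \log \hat{\rho},\nabla^\Sigma w \rangle \leq \frac{n-3}{n+1} \, w \, \hat{Q}.\]
This is the inequality we will prove.

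Next we substitute $w = 1 + \varepsilon_0 \, \Psi|_\Sigma$ from (\ref{definition.of.w}). The constant $1$ contributes nothing to the left hand side, so the left hand side equals
\[\varepsilon_0 \, \Big( \Delta_\Sigma \Psi + \frac{n-3}{n+1} \, \langle \nabla^\Sigma \log \hat{\rho},\nabla^\Sigma \Psi \rangle \Big).\]
By Corollary \ref{restriction.of.Psi}, this is at most $\varepsilon_0 \cdot \frac{n-3}{n+1} \, \varepsilon_0^{-1} \, \hat{Q} = \frac{n-3}{n+1} \, \hat{Q}$. Since $\hat{Q} > 0$ and $w \geq 1$, we have $\frac{n-3}{n+1} \, \hat{Q} \leq \frac{n-3}{n+1} \, w \, \hat{Q}$, which gives the reduced inequality.

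There is no real obstacle here. The only points needing care are bookkeeping: $\Delta_{\hat{g}}$ is the Laplacian $\Delta_\Sigma$ of the induced metric, and the inner product $\langle d\log \hat{\rho},dw \rangle_{\hat{g}}$ involves only the tangential gradient $\nabla^\Sigma$. One also checks that the coefficients match Corollary \ref{restriction.of.Psi} after multiplying by $\frac{2(n-3)}{n+1}$: the drift term $\frac{1}{2}$ becomes $\frac{n-3}{n+1}$, as required.
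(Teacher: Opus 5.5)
Your proof is correct and matches the paper's argument: Corollary \ref{restriction.of.Psi} together with $w = 1 + \varepsilon_0 \, \Psi|_\Sigma$ and $w \geq 1$ gives $\Delta_\Sigma w + \frac{n-3}{n+1} \, \langle \nabla^\Sigma \log \hat{\rho},\nabla^\Sigma w \rangle \leq \frac{n-3}{n+1} \, \hat{Q} \, w$. Combined with the definition of $\tilde{Q}$, which makes the left-hand side equal to $\frac{1}{2} \, \hat{Q}$, this yields the lemma; you have simply written out explicitly the algebra the paper leaves implicit.
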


\textbf{Proof.} 
Using (\ref{definition.of.w}), (\ref{lower.bound.for.w}), and Corollary \ref{restriction.of.Psi}, we obtain 
\[\Delta_\Sigma w + \frac{n-3}{n+1} \, \langle \nabla^\Sigma \log \hat{\rho},\nabla^\Sigma w \rangle \leq \frac{n-3}{n+1} \, \hat{Q} \leq \frac{n-3}{n+1} \, \hat{Q} \, w\] 
at each point on $\Sigma$. The assertion follows from the preceding inequality together with the definition of $\tilde{Q}$. This completes the proof of Lemma \ref{inequality.for.tilde.Q}. \\

\begin{proposition}
\label{quadratic.form.with.respect.to.tilde.g}
Let $f$ be a smooth test function on $\Sigma$ with the property that $f$ vanishes near the singular set and $f$ is constant near infinity. Then 
\begin{align*} 
&\int_\Sigma \tilde{\rho} \, |df|_{\tilde{g}}^2 \, d\text{\rm vol}_{\tilde{g}} + \frac{1}{2} \int_\Sigma \tilde{\rho} \, \Big ( R_{\tilde{g}} - 2 \, \Delta_{\tilde{g}} \log \tilde{\rho} - \frac{n}{n+1} \, |d \log \tilde{\rho}|_{\tilde{g}}^2 \Big ) \, f^2 \, d\text{\rm vol}_{\tilde{g}} \\ 
&\geq \int_\Sigma \tilde{\rho} \, \tilde{Q} \, f^2 \, d\text{\rm vol}_{\tilde{g}} 
\end{align*}
\end{proposition}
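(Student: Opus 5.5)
The plan is to reduce the statement to Corollary \ref{quadratic.form.with.respect.to.hat.g} and Lemma \ref{inequality.for.tilde.Q}. I will do this by computing exactly how each term of the weighted quadratic form changes under the substitution $\tilde{g} = w^{\frac{n+1}{n-3}} \, \hat{g}$, $\tilde{\rho} = w^{-\frac{n+1}{2}} \, \hat{\rho}$ on the $(n-1)$-manifold $\Sigma$. The exponents are chosen so that the gradient term is conformally invariant, and I will check this first. Indeed,
\[d\text{\rm vol}_{\tilde{g}} = w^{\frac{(n+1)(n-1)}{2(n-3)}} \, d\text{\rm vol}_{\hat{g}}, \qquad |df|_{\tilde{g}}^2 = w^{-\frac{n+1}{n-3}} \, |df|_{\hat{g}}^2 .\]
The total power of $w$ in $\tilde{\rho} \, |df|_{\tilde{g}}^2 \, d\text{\rm vol}_{\tilde{g}}$ is therefore $\frac{n+1}{2}\big(-1+\frac{n-1}{n-3}-\frac{2}{n-3}\big)=0$. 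Likewise,
\[\tilde{\rho} \, d\text{\rm vol}_{\tilde{g}} = w^{\frac{n+1}{n-3}} \, \hat{\rho} \, d\text{\rm vol}_{\hat{g}},\]
so that $\int_\Sigma \tilde{\rho} \, \tilde{Q} \, f^2 \, d\text{\rm vol}_{\tilde{g}} = \frac{1}{2}\int_\Sigma \hat{\rho} \, \hat{Q} \, f^2 \, d\text{\rm vol}_{\hat{g}}$. Since the right hand side of the claim is only half of $\int_\Sigma \hat{\rho}\,\hat{Q}\,f^2$, there is slack to spare.

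Next I will compute the potential
\[\tilde{P} = R_{\tilde{g}} - 2 \, \Delta_{\tilde{g}} \log \tilde{\rho} - \frac{n}{n+1} \, |d\log \tilde{\rho}|_{\tilde{g}}^2 .\]
I write $\tilde{g}=e^{2\phi}\hat{g}$ with $\phi = \frac{n+1}{2(n-3)}\log w$, work in dimension $n-1$, and use $\log \tilde{\rho} = \log \hat{\rho} - \frac{n+1}{2}\log w$. The standard formulas for $R_{\tilde g}$, $\Delta_{\tilde g}$ and $|\cdot|_{\tilde g}$ under a conformal change then give
\[w^{\frac{n+1}{n-3}} \, \tilde{P} = \hat{P} - \frac{n+1}{n-3} \, w^{-1}\Delta_{\hat{g}} w - w^{-1}\langle d\log\hat{\rho},dw\rangle_{\hat{g}} + c_n \, |d\log w|_{\hat{g}}^2 ,\]
where $\hat{P}$ is the corresponding hatted expression. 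The coefficients of the $\Delta w$ and $\langle d\log\hat\rho,dw\rangle$ terms are exactly the ones appearing in Lemma \ref{inequality.for.tilde.Q} (after multiplying by $\frac12$), which is a strong consistency check on the exponents. I expect this step to be the main obstacle: I have to verify that the constant $c_n$ in front of $|d\log w|^2$ is nonnegative. The gradient cross terms between $\phi$ and $\log\hat{\rho}$ should cancel, and that cancellation is precisely why the coefficients $\frac{n-3}{n+1}$ and $\frac{4}{n+1}$ were built into $\Psi$. Should $c_n$ turn out negative, I would try completing a square with the $\langle d\log\hat\rho,dw\rangle$ term, or integrating the $\Delta_{\tilde g}\log\tilde\rho$ term by parts against $f^2$ and absorbing the resulting cross term into the (invariant) gradient term.

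Granting $c_n \geq 0$, I then conclude as follows. Multiplying by $\tilde{\rho}\,f^2\,d\text{\rm vol}_{\tilde g} = w^{\frac{n+1}{n-3}}\hat\rho \, f^2 \, d\text{\rm vol}_{\hat g}$ shows that the left hand side of the claim is at least
\[\int_\Sigma \hat{\rho}\,|df|^2 + \frac12\int_\Sigma \hat{\rho}\,\hat{P}\,f^2 + \frac12\int_\Sigma \hat{\rho}\,\Big(-\frac{n+1}{n-3}\,w^{-1}\Delta w - w^{-1}\langle d\log\hat\rho,dw\rangle\Big) f^2 .\]
Corollary \ref{quadratic.form.with.respect.to.hat.g} applies because $f$ vanishes near $\mathcal{S}$ and is constant near infinity, so the first two terms are bounded below by $\int_\Sigma \hat\rho\,\hat Q\,f^2$. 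Lemma \ref{inequality.for.tilde.Q} bounds the integrand of the last term below by $w^{\frac{n+1}{n-3}}\tilde{Q} - \hat{Q}$. Adding these two bounds yields $\int_\Sigma \hat\rho\, w^{\frac{n+1}{n-3}}\tilde Q \, f^2\, d\text{\rm vol}_{\hat g} = \int_\Sigma \tilde\rho\,\tilde Q\, f^2\, d\text{\rm vol}_{\tilde g}$, which is the claim. No boundary terms arise in this argument, since everything is pointwise except the use of Corollary \ref{quadratic.form.with.respect.to.hat.g}, and $w$ is smooth on the support of $f$.
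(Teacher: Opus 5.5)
Your proposal is correct and follows the paper's proof. The paper uses the same conformal-change identities for $R_{\tilde g}$, $\Delta_{\tilde g}\log\tilde\rho$ and $|d\log\tilde\rho|^2_{\tilde g}$, combines them pointwise with Lemma \ref{inequality.for.tilde.Q}, and then integrates against Corollary \ref{quadratic.form.with.respect.to.hat.g}.

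The step you left open closes with no fallback needed. The coefficient of $w^{-2}|dw|_{\hat g}^2$ collects $-\frac{(n-2)(n+1)}{4}$ from $R_{\tilde g}$, $+\frac{(n-1)(n+1)}{2}$ from $-2\,\Delta_{\tilde g}\log\tilde\rho$, and $-\frac{n(n+1)}{4}$ from $-\frac{n}{n+1}\,|d\log\tilde\rho|_{\tilde g}^2$. These sum to zero, so $c_n=0$ exactly. This cancellation comes from the weight $\frac{n}{n+1}$, not from the construction of $\Psi$.
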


\textbf{Proof.}
Note that 
\[w^{\frac{n+1}{n-3}} \, |df|_{\tilde{g}}^2 = |df|_{\hat{g}}^2.\] 
The standard formula for the change of the scalar curvature under a conformal change of the metric gives 
\begin{align*} 
w^{\frac{n+1}{n-3}} \, R_{\tilde{g}} 
&= R_{\hat{g}} - \frac{4(n-2)}{n-3} \, w^{-\frac{n+1}{4}} \, \Delta_{\hat{g}} (w^{\frac{n+1}{4}}) \\ 
&= R_{\hat{g}} - \frac{(n-2)(n+1)}{n-3} \, w^{-1} \, \Delta_{\hat{g}} w - \frac{(n-2)(n+1)}{4} \, w^{-2} \, |dw|_{\hat{g}}^2. 
\end{align*}
Moreover, 
\begin{align*} 
w^{\frac{n+1}{n-3}} \, \Delta_{\tilde{g}} \log \tilde{\rho} 
&= w^{-\frac{n+1}{2}} \, \text{\rm div}_{\hat{g}}(w^{\frac{n+1}{2}} \, d\log \tilde{\rho}) \\ 
&= \Delta_{\hat{g}} \log \tilde{\rho} + \frac{n+1}{2} \, \langle d\log \tilde{\rho},d\log w \rangle \\ 
&= \Delta_{\hat{g}} \log \hat{\rho} - \frac{n+1}{2} \, \Delta_{\hat{g}} \log w + \frac{n+1}{2} \, \langle d\log \hat{\rho},d\log w \rangle_{\hat{g}} \\ 
&- \frac{(n+1)^2}{4} \, |d\log w|_{\hat{g}}^2 \\ 
&= \Delta_{\hat{g}} \log \hat{\rho} - \frac{n+1}{2} \, w^{-1} \, \Delta_{\hat{g}} w + \frac{n+1}{2} \, w^{-1} \, \langle d\log \hat{\rho},dw \rangle_{\hat{g}} \\ 
&- \frac{(n-1)(n+1)}{4} \, w^{-2} \, |dw|_{\hat{g}}^2 
\end{align*}
and 
\begin{align*} 
w^{\frac{n+1}{n-3}} \, |d\log \tilde{\rho}|_{\tilde{g}}^2 
&= |d \log \tilde{\rho}|_{\hat{g}}^2 \\ 
&= \Big | d \log \hat{\rho} - \frac{n+1}{2} \, d\log w \Big |_{\hat{g}}^2 \\ 
&= |d\log \hat{\rho}|_{\hat{g}}^2 - (n+1) \, w^{-1} \, \langle d\log \hat{\rho},dw \rangle_{\hat{g}} \\ 
&+ \frac{(n+1)^2}{4} \, w^{-2} \, |dw|_{\hat{g}}^2. 
\end{align*}
Using these identities together with Lemma \ref{inequality.for.tilde.Q}, we obtain the pointwise inequality 
\begin{align*} 
&w^{\frac{n+1}{n-3}} \, |df|_{\tilde{g}}^2 + \frac{1}{2} \, w^{\frac{n+1}{n-3}} \, \Big ( R_{\tilde{g}} - 2 \, \Delta_{\tilde{g}} \log \tilde{\rho} - \frac{n}{n+1} \, |d \log \tilde{\rho}|_{\tilde{g}}^2 \Big ) \, f^2 - w^{\frac{n+1}{n-3}} \, \tilde{Q} \, f^2 \\ 
&\geq |df|_{\hat{g}}^2 + \frac{1}{2} \, \Big ( R_{\hat{g}} - 2 \, \Delta_{\hat{g}} \log \hat{\rho} - \frac{n}{n+1} \, |d\log \hat{\rho}|_{\hat{g}}^2 \Big ) \, f^2 - \hat{Q} \, f^2. 
\end{align*}
This finally implies 
\begin{align*} 
&\int_\Sigma \tilde{\rho} \, |df|_{\tilde{g}}^2 \, d\text{\rm vol}_{\tilde{g}} + \frac{1}{2} \int_\Sigma \tilde{\rho} \, \Big ( R_{\tilde{g}} - 2 \, \Delta_{\tilde{g}} \log \tilde{\rho} - \frac{n}{n+1} \, |d \log \tilde{\rho}|_{\tilde{g}}^2 \Big ) \, f^2 \, d\text{\rm vol}_{\tilde{g}} \\ 
&- \int_\Sigma \tilde{\rho} \, \tilde{Q} \, f^2 \, d\text{\rm vol}_{\tilde{g}} \\ 
&\geq \int_\Sigma \hat{\rho} \, |df|_{\hat{g}}^2 \, d\text{\rm vol}_{\hat{g}} + \frac{1}{2} \int_\Sigma \hat{\rho} \, \Big ( R_{\hat{g}} - 2 \, \Delta_{\hat{g}} \log \hat{\rho} - \frac{n}{n+1} \, |d \log \hat{\rho}|_{\hat{g}}^2 \Big ) \, f^2 \, d\text{\rm vol}_{\hat{g}} \\ 
&- \int_\Sigma \hat{\rho} \, \hat{Q} \, f^2 \, d\text{\rm vol}_{\hat{g}}. 
\end{align*}
The expression on the right hand side is nonnegative by Corollary \ref{quadratic.form.with.respect.to.hat.g}. This completes the proof of Proposition \ref{quadratic.form.with.respect.to.tilde.g}. \\

\begin{proposition}
\label{completeness}
The metric $\tilde{g}$ on $\Sigma$ is complete.
\end{proposition}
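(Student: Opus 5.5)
To prove Proposition \ref{completeness}, I will show that every $\tilde{g}$-Cauchy sequence in $\Sigma$ (equivalently, every $\tilde g$-rectifiable curve of finite length leaving every compact set) stays in a region where completeness is already known. The only ways a curve in $\Sigma$ can escape compact sets are: going to infinity in the asymptotically planar end, approaching the singular set $\mathcal{S}$, or approaching $\partial E$. The plan is to rule out each of these at finite $\tilde g$-length.

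\emph{Infinity and $\partial E$.} Near infinity, $\Psi$ vanishes, so $w=1$ and $\tilde g=\hat g$. By Corollary \ref{graphical}, $\Sigma$ is a graph with bounded slope over the complement of a large ball in $\mathbb{R}^{n-1}$, so $\hat g$ is uniformly equivalent to the Euclidean metric there, and reaching infinity costs infinite length. As for $\partial E$, Proposition \ref{existence.of.minimizer} places $\partial^*\hat\Omega$, and hence $\Sigma$, in the closure of $E_{\mathrm{slab},2\lambda_0}\setminus\mathcal{N}_{(M,g)}(\partial E,2\eta_0)$. So $\partial E$ is never approached. Also $\mathcal{S}$ is compact by Proposition \ref{regularity.near.infinity}, so the closure of $\Sigma$ minus a neighborhood of infinity is a compact subset of $E$. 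Therefore any $\hat g$-Cauchy sequence in $\Sigma$ that does not converge in $\Sigma$ must converge in $M$ to a point of $\mathcal{S}$.

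\emph{Singular set.} This is the main step. Since $w\ge 1$ and $w\ge\varepsilon_0\Psi$, Proposition \ref{lower.bound.for.Psi} gives, for $d(x,\mathcal{S})<\frac14\sqrt{t_0}$,
\[
w(x)\ge c\,d_{(M,g)}(x,\mathcal{S})^{-2},
\]
with $c>0$. Hence
\[
\tilde g\ge c^{\frac{n+1}{n-3}}\,d(x,\mathcal{S})^{-\frac{2(n+1)}{n-3}}\,\hat g .
\]
Let $\gamma:[0,1)\to\Sigma$ be a curve with $d(\gamma(s),\mathcal{S})\to0$, starting at distance $d_1<\frac14\sqrt{t_0}$. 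The function $\sigma(s)=d_{(M,g)}(\gamma(s),\mathcal{S})$ is $1$-Lipschitz with respect to $g$-arclength, and $\hat g$ is the restriction of $g$, so $|\sigma'|\le|\dot\gamma|_{\hat g}$. Setting $k=\frac{n+1}{n-3}>1$, we get
\[
L_{\tilde g}(\gamma)\ge c'\int_0^1\sigma^{-k}|\dot\gamma|_{\hat g}\,ds\ge c'\int_0^{d_1}\sigma^{-k}\,d\sigma=\infty .
\]
Thus $\mathcal{S}$ lies at infinite $\tilde g$-distance. In fact any exponent $k\ge1$ would suffice here, and the exponent $2$ on $d^{-1}$ was built into $\zeta$ precisely for this.

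\emph{Conclusion.} Take a $\tilde g$-Cauchy sequence $x_i$. Since $w\ge1$, we have $\tilde g\ge\hat g$, so $x_i$ is also $\hat g$-Cauchy, and in particular bounded and contained in a bounded region of $M$. Pass to a subsequence converging in $M$ to some point $x_\infty$, which lies in $\overline{\Sigma}$ and is not near $\partial E$. If $x_\infty\in\mathcal{S}$, join consecutive points by nearly $\tilde g$-minimizing curves. Their concatenation has finite $\tilde g$-length and approaches $\mathcal{S}$, which contradicts the previous step. Hence $x_\infty\in\Sigma$, since the regular part is relatively open in the closure and $\partial^*\hat\Omega$ is closed away from $\mathcal{S}$. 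On a compact neighborhood of $x_\infty$ in $\Sigma$ the metrics $\tilde g$ and $\hat g$ are comparable, so $x_i\to x_\infty$ in $\tilde g$.

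The subtle point I expect to need care with is the lower bound $d_{\tilde g}\ge c\int\sigma^{-k}$. It uses that $\hat g$-length dominates ambient distance, which holds because $\Sigma\subset M$ carries the induced metric.
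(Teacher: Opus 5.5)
Your proof is correct and follows the paper's argument. Both use $w \ge c\,d_{(M,g)}(x,\mathcal{S})^{-2}$ from Proposition \ref{lower.bound.for.Psi}, together with the fact that $d_{(M,g)}(\cdot,\mathcal{S})$ is $1$-Lipschitz along curves in $\Sigma$, to show that $\mathcal{S}$ lies at infinite $\tilde{g}$-distance. The paper packages this as the explicit bound $d(y_1,\mathcal{S})^{-\frac{4}{n-3}} \le \max\{d(y_0,\mathcal{S})^{-\frac{4}{n-3}},c_1^{-\frac{4}{n-3}}\} + C\,d_{(\Sigma,\tilde{g})}(y_0,y_1)$. That form automatically handles curves that leave and re-enter the region where the lower bound on $w$ holds; in your version, restrict to the final subarc after the last time $\sigma = d_1$. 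Your extra bookkeeping about infinity, $\partial E$, and the Cauchy-sequence conclusion fills in what the paper leaves implicit.
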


\textbf{Proof.}
By Proposition \ref{lower.bound.for.Psi}, we can find small positive constants $c_1$ and $c_2$ such that  
\[w(x) \geq c_2 \, d_{(M,g)}(x,\mathcal{S})^{-2}\] 
for each point $x \in \Sigma$ satisfying $d_{(M,g)}(x,\mathcal{S}) \leq c_1$. 

Suppose now that $y_0$ and $y_1$ are two points in $\Sigma$, and suppose that $\sigma: [0,1] \to \Sigma$ is a smooth path satisfying $\sigma(0) = y_0$ and $\sigma(1) = y_1$. Then 
\[\frac{d}{dt} d_{(M,g)}(\sigma(t),\mathcal{S}) \geq -|\sigma'(t)|_{\hat{g}}\]
whenever $d_{(M,g)}(\sigma(t),\mathcal{S}) \leq c_1$. Here, the derivative is understood in the sense of liminf of backward difference quotients. This implies 
\begin{align*} 
\frac{d}{dt} \big ( d_{(M,g)}(\sigma(t),\mathcal{S})^{-\frac{4}{n-3}} \big ) 
&\leq \frac{4}{n-3} \, d_{(M,g)}(\sigma(t),\mathcal{S})^{-\frac{n+1}{n-3}} \, |\sigma'(t)|_{\hat{g}} \\ 
&\leq \frac{4}{n-3} \, c_2^{-\frac{n+1}{2(n-3)}} \, w(\sigma(t))^{\frac{n+1}{2(n-3)}} \, |\sigma'(t)|_{\hat{g}} \\ 
&= \frac{4}{n-3} \, c_2^{-\frac{n+1}{2(n-3)}} \, |\sigma'(t)|_{\tilde{g}} 
\end{align*} 
whenever $d_{(M,g)}(\sigma(t),\mathcal{S}) \leq c_1$. Here, the derivative is understood in the sense of limsup of backward difference quotients. From this, we deduce that 
\begin{align*} 
d_{(M,g)}(y_1,\mathcal{S})^{-\frac{4}{n-3}} 
&\leq \max \big \{ d_{(M,g)}(y_0,\mathcal{S})^{-\frac{4}{n-3}},c_1^{-\frac{4}{n-3}} \big \} \\ 
&+ \frac{4}{n-3} \, c_2^{-\frac{n+1}{2(n-3)}} \int_0^1 |\sigma'(t)|_{\tilde{g}} \, dt. 
\end{align*}
Thus, 
\begin{align*} 
d_{(M,g)}(y_1,\mathcal{S})^{-\frac{4}{n-3}} 
&\leq \max \big \{ d_{(M,g)}(y_0,\mathcal{S})^{-\frac{4}{n-3}},c_1^{-\frac{4}{n-3}} \big \} \\ 
&+ \frac{4}{n-3} \, c_2^{-\frac{n+1}{2(n-3)}} \, d_{(\Sigma,\tilde{g})}(y_0,y_1), 
\end{align*} 
where $d_{(\Sigma,\tilde{g})}(y_0,y_1)$ denotes the Riemannian distance of $y_0$ and $y_1$ with respect to the metric $\tilde{g}$ on $\Sigma$. Therefore, the metric $\tilde{g}$ on $\Sigma$ is complete. This completes the proof of Proposition \ref{completeness}. \\

\begin{corollary}
\label{singular.set.non.empty}
If $\mathcal{S} \neq \emptyset$, then $(\Sigma,\tilde{g},\tilde{\rho},\tilde{Q})$ is an $(n-1)$-dataset and its mass (in the sense of Definition \ref{mass.of.dataset}) is equal to $0$.
\end{corollary}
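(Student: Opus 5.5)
We verify the four requirements of Definition \ref{dataset} for $(\Sigma,\tilde{g},\tilde{\rho},\tilde{Q})$ in dimension $n-1$, and then read off the mass from the asymptotics.

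\emph{Completeness and positivity.} Completeness of $(\Sigma,\tilde{g})$ is Proposition \ref{completeness}. The functions $\tilde{\rho}=w^{-\frac{n+1}{2}}\hat{\rho}$ and $\tilde{Q}=\frac12 w^{-\frac{n+1}{n-3}}\hat{Q}$ are smooth and positive on $\Sigma$, since $w\geq 1$ is smooth on $\Sigma$ and $\hat{\rho},\hat{Q}>0$. If $n-1=3$ we would also need $\Sigma$ to have a single end; but $\mathcal{S}\neq\emptyset$ forces $n\geq 8$, so $n-1\geq 7$ and this condition is vacuous.

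\emph{Asymptotically flat end and decay.} The function $\Psi$ vanishes outside the $\sqrt{t_*}$-neighborhood of the compact set $\mathcal{S}$. Hence $w=1$ near infinity, and there $\tilde{g}=\hat{g}$, $\tilde{\rho}=\hat{\rho}$ and $\tilde{Q}=\frac12\hat{Q}=\frac14 Q$ (using $\hat{Q}=\frac12 Q$ on $E_0$). By Corollary \ref{graphical} and Proposition \ref{asymptotic.expansion.of.u}, the end of $\Sigma$ is the graph $x_n=u$. In the coordinates $(x_1,\dots,x_{n-1})$ we therefore have
\[\hat{g}=\sum dx_i^2+du\otimes du+(g-\bar g)|_\Sigma.\]
Here $du\otimes du=O(|x'|^{4-2n})$, and $(g-\bar g)|_\Sigma=\alpha r^{2-n}\bar g|_\Sigma+O(r^{2-n-2\delta})$ with $r^2=|x'|^2+u^2$ and $u$ bounded, so $r^{2-n}=|x'|^{2-n}+O(|x'|^{-n})$. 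This gives $\hat{g}=(1+\alpha|x'|^{2-n})\delta+O(|x'|^{2-n-2\delta})$. The required decay rate for an $(n-1)$-dataset is $|x'|^{3-n-2\delta'}$, and $\alpha|x'|^{2-n}=O(|x'|^{3-n-1})$, so we may take $\delta'=\min(\delta,\frac12)$ and coefficient $0$. Derivative estimates follow in the same way from Proposition \ref{asymptotic.expansion.of.u}. Similarly $\tilde{\rho}=1+O(|x'|^{2-n})$ with all derivatives, and $\tilde{Q}=O(r^{-n-2\delta})=O(|x'|^{-(n-1)-2\delta'})$. Both coefficients $\tilde\alpha,\tilde\beta$ vanish, so the mass $(n-2)\tilde\alpha+2\tilde\beta$ is $0$, exactly as in Corollary \ref{singular.set.empty}.

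\emph{The quadratic form inequality.} Proposition \ref{quadratic.form.with.respect.to.tilde.g} gives the inequality for smooth $f$ that vanish near $\mathcal{S}$ and are constant near infinity. Definition \ref{dataset} instead requires it for all $f$ with $\{f\neq0\}\setminus E_0$ bounded in $(\Sigma,\tilde g)$ and $f$ constant near infinity. Such an $f$ need not vanish near $\mathcal{S}$, but, since $\tilde g$ is complete, its support stays at positive $\tilde g$-distance from the singular part. I would use the estimate in the proof of Proposition \ref{completeness}: it bounds $d_{(M,g)}(\cdot,\mathcal{S})^{-4/(n-3)}$ by a constant plus a multiple of the $\tilde g$-distance to a fixed point. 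Hence a $\tilde g$-bounded set stays a positive $g$-distance away from $\mathcal{S}$, and $f$ vanishes near the singular set. Proposition \ref{quadratic.form.with.respect.to.tilde.g} then applies directly.

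I expect this last bookkeeping step to be the main (though mild) obstacle: identifying ``bounded in $(\Sigma,\tilde g)$'' with ``away from $\mathcal{S}$'', and checking that the $L^1$ integrability of $R_{\tilde g}$, $\Delta\log\tilde\rho$, $|d\log\tilde\rho|^2$ and $\tilde Q$ holds on the end. Integrability is automatic because $w=1$ there and these quantities decay like $|x'|^{-n+\dots}$, faster than $|x'|^{1-n}$.

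Having verified all conditions, $(\Sigma,\tilde g,\tilde\rho,\tilde Q)$ is an $(n-1)$-dataset with mass zero.
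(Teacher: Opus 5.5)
Your proposal is correct and follows the paper's argument: completeness comes from Proposition \ref{completeness}, the quadratic form inequality from Proposition \ref{quadratic.form.with.respect.to.tilde.g}, and zero mass from $w=1$ near infinity, so that $\tilde g=\hat g$ and $\tilde\rho=\hat\rho$ differ from $\delta$ and $1$ by $O(|x'|^{2-n})$. Your additional bookkeeping correctly fills in details the paper leaves implicit. This includes using the quantitative estimate from the proof of Proposition \ref{completeness} to show that admissible test functions vanish near $\mathcal{S}$, and taking $\delta'=\min(\delta,\frac12)$ so that $\alpha|x'|^{2-n}$ is absorbed into the error term in dimension $n-1$.
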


\textbf{Proof.} 
It follows from Proposition \ref{completeness} that $(\Sigma,\tilde{g})$ is a complete Riemannian manifold. Using Proposition \ref{quadratic.form.with.respect.to.tilde.g}, we can show that $(\Sigma,\tilde{g},\tilde{\rho},\tilde{Q})$ is an $(n-1)$-dataset. We next observe that the function $\Psi$ vanishes near infinity. This implies $w = 1$ near infinity. Consequently, 
\[\tilde{g} = \hat{g} = dx_1 \otimes dx_1 + \hdots + dx_{n-1} \otimes dx_{n-1} + O \big ( (x_1^2+\hdots+x_{n-1}^2)^{\frac{2-n}{2}} \big )\] 
and 
\[\tilde{\rho} = \hat{\rho} = 1 + O \big ( (x_1^2+\hdots+x_{n-1}^2)^{\frac{2-n}{2}} \big )\] 
near infinity. This shows that $(\Sigma,\tilde{g},\tilde{\rho},\tilde{Q})$ has zero mass (in the sense of Definition \ref{mass.of.dataset}). This completes the proof of Corollary \ref{singular.set.non.empty}. \\

Combining Corollary \ref{singular.set.empty} and Corollary \ref{singular.set.non.empty}, we conclude that Theorem \ref{pmt} is false in dimension $n-1$. This contradicts the inductive hypothesis. The proof of Theorem \ref{pmt} is now complete. 

\appendix

\section{A divergence identity}

In this section, we state a divergence identity that generalizes Proposition A.2 in \cite{Brendle-Hung} (see also \cite{Ambrozio-Carlotto-Sharp}).

\begin{proposition} 
\label{divergence.formula.for.second.variation} 
Let $(M,g)$ be a Riemannian manifold and let $\hat{\rho}$ be a positive function on $M$. Let $\Sigma$ be a two-sided hypersurface in $M$ satisfying $H_\Sigma + \langle \nabla \log \hat{\rho},\nu_\Sigma \rangle = \Phi$, where $\Phi$ is a smooth function on $M$. Let $V$ be a smooth vector field on $M$, and let $W = D_V V$. We define a function $f$ on $\Sigma$ by $f = \langle V,\nu_\Sigma \rangle$. Moreover, we define a tangential vector field $Z$ along $\Sigma$ by 
\[Z = D_{V^{\text{\rm tan}}}^\Sigma (V^{\text{\rm tan}}) - \text{\rm div}_\Sigma(V^{\text{\rm tan}}) \, V^{\text{\rm tan}} + 2 \sum_{k=1}^{n-1} h_\Sigma(V^{\text{\rm tan}},e_k) \, \langle V,\nu_\Sigma \rangle \, e_k.\] 
Then 
\begin{align*} 
&\hat{\rho} \, |\nabla^\Sigma f|^2 - \hat{\rho} \, (\text{\rm Ric}(\nu_\Sigma,\nu_\Sigma) + |h_\Sigma|^2) \, f^2 + (D^2 \hat{\rho})(\nu_\Sigma,\nu_\Sigma) \, f^2 - \hat{\rho}^{-1} \, \langle \nabla \hat{\rho},\nu_\Sigma \rangle^2 \, f^2 \\ 
&+ \text{\rm div}_\Sigma(\hat{\rho} \, W^{\text{\rm tan}}) - \text{\rm div}_\Sigma(\hat{\rho} \, Z) + \text{\rm div}_\Sigma(\langle V^{\text{\rm tan}},\nabla^\Sigma \hat{\rho} \rangle \, V^{\text{\rm tan}}) \\ 
&- \hat{\rho} \, \langle \nabla \Phi,\nu_\Sigma \rangle \, f^2 + \text{\rm div}(\hat{\rho} \, \Phi \, V) \, \langle V,\nu_\Sigma \rangle + \text{\rm div}_\Sigma(\hat{\rho} \, \Phi \, \langle V,\nu_\Sigma \rangle \, V^{\text{\rm tan}}) \\ 
&= \frac{1}{2} \, \hat{\rho} \sum_{k=1}^{n-1} (\mathscr{L}_V \mathscr{L}_V g)(e_k,e_k) + V(V(\hat{\rho})) \\ 
&- \frac{1}{2} \, \hat{\rho} \sum_{k,l=1}^{n-1} (\mathscr{L}_V g)(e_k,e_l) \, (\mathscr{L}_V g)(e_k,e_l) \\ 
&+ \frac{1}{4} \, \hat{\rho} \sum_{k,l=1}^{n-1} (\mathscr{L}_V g)(e_k,e_k) \, (\mathscr{L}_V g)(e_l,e_l) \\ 
&+ V(\hat{\rho}) \sum_{k=1}^{n-1} (\mathscr{L}_V g)(e_k,e_k) 
\end{align*} 
at each point on $\Sigma$. Here, $\{e_1,\hdots,e_{n-1}\}$ denotes a local orthonormal frame on $\Sigma$.
\end{proposition}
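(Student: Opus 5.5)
This identity is pointwise, so I would prove it by comparing the second variation of the weighted functional $\mathcal{F}$ along the flow of $V$ computed in two ways. Let $\phi_t$ be the flow of $V$; then $\frac{d}{dt}\phi_t = V\circ\phi_t$ and $\frac{d^2}{dt^2}\phi_t = W\circ\phi_t$ with $W = D_VV$. Consider the weighted area density on $\Sigma$, namely $\hat\rho(\phi_t(x))\,J_t(x)$, where $J_t$ is the Jacobian of $\phi_t|_\Sigma$ with respect to the induced metric. Then $\phi_t^*g$ has $t$-derivatives $\mathscr{L}_Vg$ and $\mathscr{L}_V\mathscr{L}_Vg$ at $t=0$. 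Expanding $\sqrt{\det}$ to second order gives
\[\frac{d^2}{dt^2}\Big|_{t=0} J_t = \tfrac12 \operatorname{tr}(\mathscr{L}_V\mathscr{L}_Vg) - \tfrac12|\mathscr{L}_Vg|^2 + \tfrac14(\operatorname{tr}\mathscr{L}_Vg)^2,\]
with traces over $e_1,\dots,e_{n-1}$. Also $\frac{d^2}{dt^2}\hat\rho\circ\phi_t = V(V(\hat\rho))$, and the cross term is $2V(\hat\rho)\cdot\tfrac12\operatorname{tr}\mathscr{L}_Vg$. This reproduces exactly the right-hand side.

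For the volume term, $\frac{d^2}{dt^2}\int_{\phi_t(\Omega)}\hat\rho\Phi = \int_\Omega \text{div}(\text{div}(\hat\rho\Phi V)V)$. Pointwise on $\Sigma$ this corresponds to $\text{div}(\hat\rho\Phi V)\langle V,\nu_\Sigma\rangle$. That explains the $\Phi$-terms on the left up to divergences, but the identity must be proved pointwise, not after integration.

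The honest route is therefore a direct computation. I would decompose $V = V^{\rm tan} + f\nu_\Sigma$ and compute the right-hand side using
\[(\mathscr{L}_Vg)(e_k,e_l) = \langle D_{e_k}V,e_l\rangle + \langle D_{e_l}V,e_k\rangle,\]
where $\langle D_{e_k}V,e_l\rangle = \langle \nabla^\Sigma_{e_k}V^{\rm tan},e_l\rangle - f\,h(e_k,e_l)$. I would also use
\[\sum_k(\mathscr{L}_V\mathscr{L}_Vg)(e_k,e_k) = 2\sum_k\big(\langle D_{e_k}W,e_k\rangle - \langle R(V,e_k)V,e_k\rangle + |D_{e_k}V|^2\big).\]
Next I would split $|D_{e_k}V|^2$ into tangential and normal parts; the normal part $\langle D_{e_k}V,\nu\rangle = e_k(f) + h(V^{\rm tan},e_k)$ produces $|\nabla^\Sigma f|^2$. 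I would rewrite $\sum_k\langle D_{e_k}W,e_k\rangle = \text{div}_\Sigma W^{\rm tan} - H\langle W,\nu\rangle$. The curvature term gives $-\text{Ric}(\nu,\nu)f^2$ plus tangential contributions. Those tangential contributions, together with the quadratic terms in $\nabla^\Sigma V^{\rm tan}$, should be absorbed by $\text{div}_\Sigma Z$ via the Gauss and Codazzi equations. This is the unweighted identity of Brendle–Hung, Proposition A.2, which I would invoke (or re-derive) for $\hat\rho\equiv1$, $\Phi=0$ in its form with the mean curvature term $H\langle W,\nu\rangle$ and $H$-dependent terms retained.

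Then I would insert the weight. The terms $V(V(\hat\rho)) = (D^2\hat\rho)(V,V) + \langle\nabla\hat\rho,W\rangle$ and $V(\hat\rho)\operatorname{tr}\mathscr{L}_Vg$ are expanded in the splitting. The Hessian gives $(D^2\hat\rho)(\nu,\nu)f^2$ plus mixed terms, which combine with $\langle V^{\rm tan},\nabla^\Sigma\hat\rho\rangle$ into $\text{div}_\Sigma(\langle V^{\rm tan},\nabla^\Sigma\hat\rho\rangle V^{\rm tan})$. The product rule on $\text{div}_\Sigma(\hat\rho W^{\rm tan})$ and $\text{div}_\Sigma(\hat\rho Z)$ absorbs the gradient terms. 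Finally, I would use the equation $H = \Phi - \langle\nabla\log\hat\rho,\nu\rangle$ everywhere $H$ appears, in particular in $-\hat\rho H\langle W,\nu\rangle$, $-\hat\rho Hf\,\text{div}_\Sigma V^{\rm tan}$, and $\hat\rho H^2f^2$-type terms. The $\langle\nabla\log\hat\rho,\nu\rangle$ pieces cancel against the normal derivative of $\hat\rho$, producing the $-\hat\rho^{-1}\langle\nabla\hat\rho,\nu\rangle^2f^2$ term. The $\Phi$ pieces assemble, after expanding $\text{div}(\hat\rho\Phi V) = \hat\rho\Phi\,\text{div}V + \langle\nabla(\hat\rho\Phi),V\rangle$ and $\text{div}V = \text{div}_\Sigma V^{\rm tan} - Hf + \langle D_\nu V,\nu\rangle$, into $-\hat\rho\langle\nabla\Phi,\nu\rangle f^2 + \text{div}(\hat\rho\Phi V)f + \text{div}_\Sigma(\hat\rho\Phi fV^{\rm tan})$.

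The main obstacle is the bookkeeping in this last step. The $\langle D_\nu V,\nu\rangle$ terms and $\langle W,\nu\rangle$ must cancel exactly between the $\Phi$-terms and the term $-\hat\rho H\langle W,\nu\rangle$ coming from $\text{div}_\Sigma W$. I would check this first in the purely normal case $V^{\rm tan}=0$, where only these terms survive, and then add the tangential part.
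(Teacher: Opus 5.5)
Your route is essentially the paper's. Both do a direct pointwise computation adapting Brendle--Hung, Proposition A.2. Both rely on $(\mathscr{L}_V\mathscr{L}_Vg-\mathscr{L}_Wg)(X,Y)=2\langle D_XV,D_YV\rangle-2R(V,X,V,Y)$ and on $V(V(\hat\rho))=(D^2\hat\rho)(V,V)+W(\hat\rho)$. Both use Gauss and Codazzi to evaluate $\operatorname{div}_\Sigma Z$; the paper splits $Z=Z^{(1)}+Z^{(2)}$ for exactly this. Then both substitute the hypothesis and regroup the $\Phi$-terms, which is the ``straightforward calculation'' closing the paper's proof. There is, however, one concrete error that makes the step you single out as the crux fail as written: your sign convention for $h_\Sigma$ and $H_\Sigma$ is the opposite of the one the statement requires.

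Your formulas $\langle D_{e_k}V,e_l\rangle=\langle\nabla^\Sigma_{e_k}V^{\text{\rm tan}},e_l\rangle-f\,h_\Sigma(e_k,e_l)$, $\sum_k\langle D_{e_k}W,e_k\rangle=\operatorname{div}_\Sigma W^{\text{\rm tan}}-H_\Sigma\langle W,\nu_\Sigma\rangle$ and $\operatorname{div}V=\operatorname{div}_\Sigma V^{\text{\rm tan}}-H_\Sigma f+\langle D_{\nu_\Sigma}V,\nu_\Sigma\rangle$ all encode $h_\Sigma(X,Y)=-\langle D_X\nu_\Sigma,Y\rangle$ and $H_\Sigma=-\operatorname{div}_\Sigma\nu_\Sigma$. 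In that convention the identity is false. Take $g$ flat, $\hat\rho\equiv1$ and $V^{\text{\rm tan}}\equiv0$ along $\Sigma$: the left side minus the right side is $(\Phi-\operatorname{div}_\Sigma\nu_\Sigma)(\langle W,\nu_\Sigma\rangle+f^2\operatorname{div}_\Sigma\nu_\Sigma)$, and the hypothesis then reads $H_\Sigma=\Phi$.

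The same failure hits the $\langle W,\nu_\Sigma\rangle$-cancellation you flag. In general, $\hat\rho\sum_k\langle D_{e_k}W,e_k\rangle+W(\hat\rho)=\operatorname{div}_\Sigma(\hat\rho W^{\text{\rm tan}})+(\hat\rho\operatorname{div}_\Sigma\nu_\Sigma+\langle\nabla\hat\rho,\nu_\Sigma\rangle)\langle W,\nu_\Sigma\rangle$. The $\Phi$-terms on the left supply $\hat\rho\Phi\langle W,\nu_\Sigma\rangle$. So the cancellation needs $\operatorname{div}_\Sigma\nu_\Sigma+\langle\nabla\log\hat\rho,\nu_\Sigma\rangle=\Phi$. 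Likewise, the $h_\Sigma$-term in $Z$ cancels the cross term $2\sum_k h_\Sigma(V^{\text{\rm tan}},e_k)\langle D_{e_k}V,\nu_\Sigma\rangle$ in $|\nabla^\Sigma f|^2$ only if $h_\Sigma(X,Y)=\langle D_X\nu_\Sigma,Y\rangle$.

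So work with $h_\Sigma(X,Y)=\langle D_X\nu_\Sigma,Y\rangle$ and $H_\Sigma=\operatorname{div}_\Sigma\nu_\Sigma$. This is the convention the paper uses, and the one in which the hypothesis is the Euler--Lagrange equation of $\mathcal{F}$ for the outward normal. Then $\hat\rho H_\Sigma+\langle\nabla\hat\rho,\nu_\Sigma\rangle=\hat\rho\Phi$, and the argument goes through as in the paper.

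One smaller bookkeeping point concerns the mixed Hessian term $2f\,(D^2\hat\rho)(V^{\text{\rm tan}},\nu_\Sigma)$. It does not come from $\operatorname{div}_\Sigma(\langle V^{\text{\rm tan}},\nabla^\Sigma\hat\rho\rangle V^{\text{\rm tan}})$, which only produces the tangential Hessian. It arises from the Codazzi term $2f\langle\nabla^\Sigma H_\Sigma,V^{\text{\rm tan}}\rangle$, once you substitute $H_\Sigma=\Phi-\langle\nabla\log\hat\rho,\nu_\Sigma\rangle$ and differentiate $\langle\nabla\log\hat\rho,\nu_\Sigma\rangle$ along $\Sigma$.
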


\textbf{Proof.} 
We adapt the proof of Proposition A.2 in \cite{Brendle-Hung}. As in \cite{Brendle-Hung}, we write $Z = Z^{(1)} + Z^{(2)}$, where 
\[Z^{(1)} = D_{V^{\text{\rm tan}}}^\Sigma (V^{\text{\rm tan}}) - \text{\rm div}_\Sigma(V^{\text{\rm tan}}) \, V^{\text{\rm tan}}\] 
and 
\[Z^{(2)} = 2 \sum_{k=1}^{n-1} h_\Sigma(V^{\text{\rm tan}},e_k) \, \langle V,\nu_\Sigma \rangle \, e_k.\] 
Using the Gauss equations and the identity $\langle D_{e_k} V,e_l \rangle = \langle D_{e_k} V^{\text{\rm tan}},e_l \rangle + h_\Sigma(e_k,e_l) \, \langle V,\nu_\Sigma \rangle$ for $k,l \in \{1,\hdots,n-1\}$, we compute 
\begin{align*} 
&\text{\rm div}_\Sigma(Z^{(1)}) \\ 
&= \sum_{k,l=1}^{n-1} \langle D_{e_k} V^{\text{\rm tan}},e_l \rangle \, \langle D_{e_l} V^{\text{\rm tan}},e_k \rangle - \sum_{k,l=1}^{n-1} \langle D_{e_k} V^{\text{\rm tan}},e_k \rangle \, \langle D_{e_l} V^{\text{\rm tan}},e_l \rangle \\ 
&+ \text{\rm Ric}_\Sigma(V^{\text{\rm tan}},V^{\text{\rm tan}}) \\ 
&= \sum_{k,l=1}^{n-1} \langle D_{e_k} V,e_l \rangle \, \langle D_{e_l} V,e_k \rangle - \sum_{k,l=1}^{n-1} \langle D_{e_k} V,e_k \rangle \, \langle D_{e_l} V,e_l \rangle \\ 
&- 2 \sum_{k,l=1}^{n-1} h_\Sigma(e_k,e_l) \, \langle D_{e_k} V^{\text{\rm tan}},e_l \rangle \, \langle V,\nu_\Sigma \rangle + 2 \, H_\Sigma \sum_{k=1}^{n-1} \langle D_{e_k} V,e_k \rangle \, \langle V,\nu_\Sigma \rangle \\ 
&- H_\Sigma^2 \, \langle V,\nu_\Sigma \rangle^2 - |h_\Sigma|^2 \, \langle V,\nu_\Sigma \rangle^2 + H_\Sigma \, h_\Sigma(V^{\text{\rm tan}},V^{\text{\rm tan}}) - h_\Sigma^2(V^{\text{\rm tan}},V^{\text{\rm tan}}) \\ 
&+ \sum_{k=1}^{n-1} R(V^{\text{\rm tan}},e_k,V^{\text{\rm tan}},e_k). 
\end{align*}
Using the Codazzi equations, we obtain 
\begin{align*} 
\text{\rm div}_\Sigma(Z^{(2)}) 
&= 2 \sum_{k,l=1}^{n-1} h_\Sigma(e_k,e_l) \, \langle D_{e_k} V^{\text{\rm tan}},e_l \rangle \, \langle V,\nu_\Sigma \rangle \\ 
&+ 2 \sum_{k=1}^{n-1} h_\Sigma(V^{\text{\rm tan}},e_k) \, \langle D_{e_k} V,\nu_\Sigma \rangle + 2 \, h_\Sigma^2(V^{\text{\rm tan}},V^{\text{\rm tan}}) \\ 
&+ 2 \sum_{k=1}^{n-1} R(V^{\text{\rm tan}},e_k,\nu_\Sigma,e_k) \, \langle V,\nu_\Sigma \rangle + 2 \, \langle \nabla^\Sigma H_\Sigma,V^{\text{\rm tan}} \rangle \, \langle V,\nu_\Sigma \rangle. 
\end{align*}
Moreover, 
\[|\nabla^\Sigma f|^2 = \sum_{k=1}^{n-1} \langle D_{e_k} V,\nu_\Sigma \rangle^2 + 2 \sum_{k=1}^{n-1} h_\Sigma(V^{\text{\rm tan}},e_k) \, \langle D_{e_k} V,\nu_\Sigma \rangle + h_\Sigma^2(V^{\text{\rm tan}},V^{\text{\rm tan}}).\] 
Putting these facts together, we obtain 
\begin{align*} 
&\text{\rm div}_\Sigma Z - |\nabla^\Sigma f|^2 + (\text{\rm Ric}(\nu_\Sigma,\nu_\Sigma) + |h_\Sigma|^2) \, f^2 \\ 
&= \sum_{k,l=1}^{n-1} \langle D_{e_k} V,e_l \rangle \, \langle D_{e_l} V,e_k \rangle - \sum_{k,l=1}^{n-1} \langle D_{e_k} V,e_k \rangle \, \langle D_{e_l} V,e_l \rangle \\ 
&- \sum_{k=1}^{n-1} \langle D_{e_k} V,\nu_\Sigma \rangle^2 + \sum_{k=1}^{n-1} R(V,e_k,V,e_k) \\ 
&+ H_\Sigma \, h_\Sigma(V^{\text{\rm tan}},V^{\text{\rm tan}}) - H_\Sigma^2 \, \langle V,\nu_\Sigma 
\rangle^2 \\ 
&+ 2 \, H_\Sigma \sum_{k=1}^{n-1} \langle D_{e_k} V,e_k \rangle \, \langle V,\nu_\Sigma \rangle + 2 \, \langle \nabla^\Sigma H_\Sigma,V^{\text{\rm tan}} \rangle \, \langle V,\nu_\Sigma \rangle. 
\end{align*} 
Using the identity $H_\Sigma + \langle \nabla \log \hat{\rho},\nu_\Sigma \rangle = \Phi$, it follows that 
\begin{align*} 
&\text{\rm div}_\Sigma(\hat{\rho} \, Z) - \text{\rm div}_\Sigma(\langle V^{\text{\rm tan}},\nabla^\Sigma \hat{\rho} \rangle \, V^{\text{\rm tan}}) \\ 
&- \hat{\rho} \, |\nabla^\Sigma f|^2 + \hat{\rho} \, (\text{\rm Ric}(\nu_\Sigma,\nu_\Sigma) + |h_\Sigma|^2) \, f^2 \\ 
&- (D^2 \hat{\rho})(\nu_\Sigma,\nu_\Sigma) \, f^2 + \hat{\rho}^{-1} \, \langle \nabla \hat{\rho},\nu_\Sigma \rangle^2 \, f^2 \\ 
&- 2\hat{\rho} \, \langle \nabla^\Sigma \Phi,V^{\text{\rm tan}} \rangle \, \langle V,\nu_\Sigma \rangle - 2\hat{\rho} \, \Phi \, \text{\rm div}_\Sigma(V^{\text{\rm tan}}) \, \langle V,\nu_\Sigma \rangle \\ 
&- 2 \, \Phi \, \langle \nabla^\Sigma \hat{\rho},V^{\text{\rm tan}} \rangle \, \langle V,\nu_\Sigma \rangle - \hat{\rho} \, \Phi \, h_\Sigma(V^{\text{\rm tan}},V^{\text{\rm tan}}) - \hat{\rho} \, \Phi^2 \, \langle V,\nu_\Sigma \rangle^2 \\ 
&= \hat{\rho} \sum_{k,l=1}^{n-1} \langle D_{e_k} V,e_l \rangle \, \langle D_{e_l} V,e_k \rangle - \hat{\rho} \sum_{k,l=1}^{n-1} \langle D_{e_k} V,e_k \rangle \, \langle D_{e_l} V,e_l \rangle \\ 
&- \hat{\rho} \sum_{k=1}^{n-1} \langle D_{e_k} V,\nu_\Sigma \rangle^2 + \hat{\rho} \sum_{k=1}^{n-1} R(V,e_k,V,e_k) \\ 
&- 2 \, V(\hat{\rho}) \sum_{k=1}^{n-1} \langle D_{e_k} V,e_k \rangle - (D^2 \hat{\rho})(V,V). 
\end{align*} 
We next observe that 
\[(\mathscr{L}_V \mathscr{L}_V g)(X,Y) - (\mathscr{L}_W g)(X,Y) = 2 \, \langle D_X V,D_Y V \rangle - 2 \, R(V,X,V,Y)\] 
for all vector fields $X,Y$ on $M$. Moreover, 
\[V(V(\hat{\rho})) - W(\hat{\rho}) = (D^2 \hat{\rho})(V,V).\] 
Using these identities together with the identity $H_\Sigma + \langle \nabla \log \hat{\rho},\nu_\Sigma \rangle = \Phi$, we obtain 
\begin{align*} 
&\frac{1}{2} \, \hat{\rho} \sum_{k=1}^{n-1} (\mathscr{L}_V \mathscr{L}_V g)(e_k,e_k) + V(V(\hat{\rho})) - \text{\rm div}_\Sigma(\hat{\rho} \, W^{\text{\rm tan}}) - \hat{\rho} \, \Phi \, \langle W,\nu_\Sigma \rangle \\ 
&= \hat{\rho} \sum_{k=1}^{n-1} |D_{e_k} V|^2 - \hat{\rho} \sum_{k=1}^{n-1} R(V,e_k,V,e_k) + (D^2 \hat{\rho})(V,V) \\  
&= \hat{\rho} \sum_{k,l=1}^{n-1} \langle D_{e_k} V,e_l \rangle^2 + \hat{\rho} \sum_{k=1}^{n-1} \langle D_{e_k} V,\nu_\Sigma \rangle^2 - \hat{\rho} \sum_{k=1}^{n-1} R(V,e_k,V,e_k) + (D^2 \hat{\rho})(V,V). 
\end{align*} 
Putting these facts together, we conclude that 
\begin{align*} 
&\frac{1}{2} \, \hat{\rho} \sum_{k=1}^{n-1} (\mathscr{L}_V \mathscr{L}_V g)(e_k,e_k) + V(V(\hat{\rho})) - \text{\rm div}_\Sigma(\hat{\rho} \, W^{\text{\rm tan}}) \\ 
&+ \text{\rm div}_\Sigma(\hat{\rho} \, Z) - \text{\rm div}_\Sigma(\langle V^{\text{\rm tan}},\nabla^\Sigma \hat{\rho} \rangle \, V^{\text{\rm tan}}) \\ 
&- \hat{\rho} \, |\nabla^\Sigma f|^2 + \hat{\rho} \, (\text{\rm Ric}(\nu_\Sigma,\nu_\Sigma) + |h_\Sigma|^2) \, f^2 \\ 
&- (D^2 \hat{\rho})(\nu_\Sigma,\nu_\Sigma) \, f^2 + \hat{\rho}^{-1} \, \langle \nabla \hat{\rho},\nu_\Sigma \rangle^2 \, f^2 \\ 
&- \hat{\rho} \, \Phi \, \langle W,\nu_\Sigma \rangle - 2\hat{\rho} \, \langle \nabla^\Sigma \Phi,V^{\text{\rm tan}} \rangle \, \langle V,\nu_\Sigma \rangle - 2\hat{\rho} \, \Phi \, \text{\rm div}_\Sigma(V^{\text{\rm tan}}) \, \langle V,\nu_\Sigma \rangle \\ 
&- 2 \, \Phi \, \langle \nabla^\Sigma \hat{\rho},V^{\text{\rm tan}} \rangle \, \langle V,\nu_\Sigma \rangle - \hat{\rho} \, \Phi \, h_\Sigma(V^{\text{\rm tan}},V^{\text{\rm tan}}) - \hat{\rho} \, \Phi^2 \, \langle V,\nu_\Sigma \rangle^2 \\ 
&= \hat{\rho} \sum_{k,l=1}^{n-1} \langle D_{e_k} V,e_l \rangle^2 + \hat{\rho} \sum_{k,l=1}^{n-1} \langle D_{e_k} V,e_l \rangle \, \langle D_{e_l} V,e_k \rangle \\ 
&- \hat{\rho} \sum_{k,l=1}^{n-1} \langle D_{e_k} V,e_k \rangle \, \langle D_{e_l} V,e_l \rangle - 2 \, V(\hat{\rho}) \sum_{k=1}^{n-1} \langle D_{e_k} V,e_k \rangle. 
\end{align*}
Finally, a straightforward calculation gives  
\begin{align*} 
&\hat{\rho} \, \Phi \, \langle W,\nu_\Sigma \rangle + 2\hat{\rho} \, \langle \nabla^\Sigma \Phi,V^{\text{\rm tan}} \rangle \, \langle V,\nu_\Sigma \rangle + 2\hat{\rho} \, \Phi \, \text{\rm div}_\Sigma(V^{\text{\rm tan}}) \, \langle V,\nu_\Sigma \rangle \\ 
&+ 2 \, \Phi \, \langle \nabla^\Sigma \hat{\rho},V^{\text{\rm tan}} \rangle \, \langle V,\nu_\Sigma \rangle + \hat{\rho} \, \Phi \, h_\Sigma(V^{\text{\rm tan}},V^{\text{\rm tan}}) + \hat{\rho} \, \Phi^2 \, \langle V,\nu_\Sigma \rangle^2 \\ 
&= -\hat{\rho} \, \langle \nabla \Phi,\nu_\Sigma \rangle \, \langle V,\nu_\Sigma \rangle^2 + \text{\rm div}(\hat{\rho} \, \Phi \, V) \, \langle V,\nu_\Sigma \rangle + \text{\rm div}_\Sigma(\hat{\rho} \, \Phi \, \langle V,\nu_\Sigma \rangle \, V^{\text{\rm tan}}). 
\end{align*}
From this, the assertion follows easily. This completes the proof of Proposition \ref{divergence.formula.for.second.variation}. \\

\end{document}